\documentclass[12pt]{amsart}

\usepackage[export]{adjustbox}  %For\adjustbox{\valign=c}{}
\usepackage{amsmath}
\usepackage{amssymb}
\usepackage{amsthm}
\usepackage[title]{appendix}
\usepackage{array}
\usepackage[backend=bibtex,giveninits=true,style=alphabetic,sorting=nyt,doi=false,isbn=false,url=false,eprint=true]{biblatex}
\usepackage{blindtext}
\usepackage{booktabs} %For \midrule
\usepackage{caption}
\usepackage{color}
\usepackage{changepage}
\usepackage{csquotes}
\usepackage{enumitem}
\usepackage{float}
\usepackage[margin=1in]{geometry}
\usepackage{graphicx}
\usepackage[hidelinks,linktoc=all]{hyperref}
\usepackage{nicefrac}
\usepackage{parskip}
\usepackage{rotating}
\usepackage{stackengine}
\usepackage{stmaryrd}   %For \llbracket and \rrbracket
\usepackage{thmtools}
\usepackage{tikz}
\usetikzlibrary{cd}
\usepackage{titlesec}
\usepackage{mathtools}
\usepackage{multicol} %For multicols environment
\usepackage{verbatimbox}

\usepackage{tocloft} % for changing layout of table of contents
\usepackage{lipsum}
\usepackage{todonotes}
\counterwithout{figure}{section}
\counterwithout{table}{section}
\titleformat{\section}{\normalfont\normalfont\bfseries}{\thesection}{1em}{}
\titleformat{\subsection}{\normalfont\normalfont\bfseries}{\thesubsection}{1em}{}
\titleformat{\paragraph}{\normalfont\normalfont\bfseries}{\thesection}{1em}{}
\theoremstyle{plain}
\newtheorem{theorem}{Theorem}
\newtheorem{lemma}[theorem]{Lemma}
\newtheorem{corollary}[theorem]{Corollary}
\newtheorem{proposition}[theorem]{Proposition}
\newtheorem{conjecture}[theorem]{Conjecture}
\theoremstyle{definition}
\newtheorem{definition}[theorem]{Definition}
\newtheorem{remark}[theorem]{Remark}

\newcommand{\defeq}{\coloneqq}

\newcommand{\NN}{{\mathbb{N}}}
\newcommand{\ZZ}{{\mathbb{Z}}}

\newcommand{\RR}{{\mathbb{R}}}

\newcommand{\col}{{\text{Col}}}
\newcommand{\mcol}{{\text{MCol}}}
\newcommand{\includeFig}[1]{\adjustbox{valign = c}{\includegraphics[scale=1]{Figures/#1.pdf}}}
\newcommand{\includeSymb}[1]{\,\adjustbox{valign = c}{\includegraphics[scale=0.2]{FigureAssembly/Symbols/Symbol-#1/symbol.pdf}}\,}
\usepackage{nicematrix}
\NiceMatrixOptions
  {
    custom-line = 
     {
       letter = I , % for the vertical rules
       command = boldhline , % the horizontal rules 
       total-width = 1 pt , 
       tikz = { line width = 1 pt } 
     }
  }
\title[A module structure on odd Khovanov homology]{A module structure on odd Khovanov homology and the odd invariant for ribbon 2-knots}
\author{Jacob Migdail}
\address{Washington and Lee University; Mathematics Department; Chavis Hall; Lexington, VA 24450}
\email{jmigdail-smith@wlu.edu}

\author{Stephan Wehrli}
\address{Syracuse University; Department of Mathematics; 215 Carnegie; Syracuse, NY 13244}
\email{smwehrli@syr.edu}
\date{\today}
\bibliography{bib}
\begin{document}

\begin{abstract} 
We prove that the reduced odd Khovanov homology of a link $L$ is naturally a module
over the exterior algebra of the first homology of the link's branched double-cover.
We then describe this module structure more geometrically and related it to the odd Khovanov maps induced by link cobordisms.
As an application, we will give a combinatorial proof of a recent result of Spyropoulos-Vidyarthi-Zhang
\cite[Theorem~1.2]{SVZ2026}
about the odd invariant for $2$-knots 
in the special case where the $2$-knot is a ribbon $2$-knot.
Additionally, we will show that Levine-Zemke's main result from \cite{LZ2019}
remains true for odd Khovanov homology with rational coefficients
and with coefficients in $\mathbb{Z}_{2^k}$.
\end{abstract}

\maketitle

\section{Introduction}\label{s:intro}

Odd Khovanov homology was introduced by Ozsv{\'a}th-Rasmussen-Szab{\'o}
\cite{ORS2007} as an alternative categorification of the Jones polynomial
\cite{Jo1987}.
When coefficients are taken in the two-element field $\mathbb{F}_2$, odd Khovanov homology
agrees with the original \enquote{even} Khovanov homology from \cite{Kh1999}.
On the other hand, the two theories are
substantially different
over integer (or rational) coefficients,
with neither of them being a refinement of the other.

The construction of odd Khovanov homology was originally
motivated by an attempt to lift the spectral sequence from \cite{OS2005}
to integer coefficients. In particular, it was conjectured in \cite{ORS2007}
that there is a spectral sequence starting with the reduced
odd Khovanov homology $\overline{OKh}(L)$, and converging
to the Heegaard Floer homology $\widehat{HF}(-\Sigma(L))$,
where $\Sigma(L)$ denotes the branched double-cover of $S^3$,
branched along the link $L\subset S^3$.
While such a spectral sequence has not yet been established over integer coefficients,
Daemi in \cite{Dae2015} and Scaduto in \cite{Sca2015} constructed spectral sequences
from $\overline{OKh}(L)$ to Floer theories associated with $\Sigma(L)$.
Branched double-covers also appeared more implicitly in the context
of Bloom's proof of the invariance of odd Khovanov homology
under knot mutation \cite{Blo2010}.

In the present paper, we will describe a new connection
between odd Khovanov homology and branched double-covers,
in the form of the following theorem:

\begin{restatable}{theorem}{TheoremModule}\label{thm:module}
The reduced odd Khovanov homology of a link $L\subset\mathbb{R}^3$ is a module over the exterior algebra $\Lambda^*H_1(\Sigma(L);\mathbb{Z})$.
\end{restatable}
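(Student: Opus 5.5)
Fix a diagram $D$ of $L$ with a basepoint $p$ on one arc. The module structure will come from chain maps on the reduced odd Khovanov complex, and the first step is to identify the target algebra combinatorially. The Goeritz/Tait presentation gives $H_1(\Sigma(L);\mathbb{Z})$ as follows. Checkerboard color the diagram and take the regions of one color as generators, with one relation for each crossing coming from the Goeritz matrix. Equivalently, take the arcs of $D$ as generators $x_a$, impose $x_a=-x_b$-type relations at crossings, namely $x_{\text{over}}\cdot 2 = x_{u_1}+x_{u_2}$ in Fox-coloring form, and set the arc through $p$ to zero. With this description, $H_1(\Sigma(L))$ is the cokernel of the Fox-coloring matrix mod the basepoint arc, which is the standard presentation as the group of $\mathbb{Z}$-"colorings" dual to Fox colorings.

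The second step is to define, for each arc $a$ of $D$, an operator $X_a$ on $\overline{OKh}$. In each resolution, the arc $a$ lies on a circle $c$. On the exterior algebra $\Lambda^*V(D_v)/(x_p)$ that defines the odd chain group, let $X_a$ act by wedging with $x_c$. Since the odd complex is built from exterior algebras, these operators automatically anticommute and square to zero: $X_aX_b=-X_bX_a$ and $X_a^2=0$. I will check that $X_a$ is a chain map up to sign. The edge maps are merges, which are quotient maps $\Lambda V\to\Lambda V/(x_{c_1}-x_{c_2})$, and splits, which are $\omega\mapsto (x_{c_1}-x_{c_2})\wedge\omega$. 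Wedging with a generator commutes with the merge, and it anticommutes with the split. These signs can be absorbed by composing with the grading involution $(-1)^{\text{hom. degree}}$, or equivalently by using the homological-degree twist, so that each $X_a$ becomes a genuine (super)chain map. On homology this gives anticommuting square-zero operators, hence an action of $\Lambda^*$ of the free abelian group on the arcs.

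The third step, which I expect to be the main obstacle, is to show that this action descends through the crossing relations and the basepoint relation. The basepoint relation is immediate in the reduced theory, because $x_p$ acts by zero there. For a crossing with over-arc $o$ and under-arcs $u_1,u_2$, I must show that $2X_o - X_{u_1}-X_{u_2}$ (with the correct signs for the orientation conventions) is null-homotopic. The plan is to build the homotopy $H$ locally from the crossing's edge. Let $H$ be the map that goes backwards along the edges of the cube associated with that crossing, composing "wedge with $x$" with the reverse saddle. One then computes $dH+Hd$ using the relation $x_{c_1}-x_{c_2}$ on both sides of the saddle, exactly as in the even setting, where the basepoint-moving homotopy $X_{p}\simeq -X_{q}$ across a crossing is standard (Hedden--Ni / Shumakovitch). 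The difficulty lies in the sign assignments of the odd cube, namely the type X/Y faces. One must check that the odd sign choices are compatible with $H$ up to a global sign on each face, which will require a case analysis on the local configurations (merge/split, adjacent crossings) following the ORS classification of faces.

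Finally, I will prove independence of the diagram. I will show that the Reidemeister chain homotopy equivalences of ORS intertwine the $X_a$ with the corresponding arc operators, again up to homotopy. I will also show that the induced isomorphism of $H_1(\Sigma(L))$ matches the standard change of Goeritz presentation under Reidemeister moves. Together with independence from the choice of $p$, which follows from moving the basepoint across crossings via the third step, this gives a well-defined $\Lambda^*H_1(\Sigma(L);\mathbb{Z})$-module structure on $\overline{OKh}(L)$.
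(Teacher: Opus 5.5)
Your architecture matches the paper's: dot maps on resolutions, a reverse-saddle homotopy at one crossing to move a dot, the Fox-coloring presentation of $H_1(\Sigma(L);\mathbb{Z})$, and reduction by a basepoint. Your quotient $\Lambda^*V(D_v)/(x_p)$ is equivalent to the paper's basepoint-free subcomplex spanned by differences of circles; under this identification your $X_a$ corresponds to $x_a-x_p$, and $\col(D)/\langle a_p\rangle$ corresponds to $\col(D)^{red}$.

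However, the sign repair in your second step is wrong. Left wedge multiplication by $x_c$ commutes with merges and anticommutes with splits, while the homological degree rises by one along \emph{every} edge of the cube. Twisting by $(-1)^{\text{hom. degree}}$ therefore gives a map that anticommutes with merges and commutes with splits, which is neither a chain map nor a super-chain map. You need a vertex sign that flips along split edges only. The paper uses $(-1)^{S(D,\alpha)}$ with $S(D,\alpha)=\tfrac12\bigl(c(D,\alpha)+\deg(\alpha)+n_+-2n_--|D|\bigr)$, which is constant along merges and increases by one along splits. Right multiplication by $x_c$ would also work.

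Relatedly, a Fox arc is cut by the resolution at every crossing it passes over, so it need not lie on one circle; the dot maps have to be defined for edges. At a crossing with edges $a,b,c,d$ ($b,d$ over) one has $x_a+x_c=x_b+x_d$ on the nose. So your relation $2X_o\simeq X_{u_1}+X_{u_2}$ is exactly the statement $x_b\simeq x_d$, that a dot slides \emph{over} the crossing.

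The genuine gap is that this slide depends on the type of sign assignment, which your plan never confronts. The homotopy is the reverse saddle alone, multiplied by $(-1)^{S(D,1\alpha)}$ times the sign of the crossing's edge. It already has quantum degree $-2$; composing it with a wedge, as you suggest, gives degree $-4$. The arrow chosen at its saddle decides what $d_ch+hd_c$ is, where $d_c$ is the part of the differential at that crossing. One arrow choice gives the difference of the dots on the two under-edges $a,c$; the opposite choice gives $x_b-x_d$ up to sign.

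The cross terms over the other crossings must then cancel; this is Manion's case analysis. It works for the under-slide with type X sign assignments and for the over-slide with type Y, and not both ways. In the paper's two-component unlink example, two dots on one understrand compose to $0$ in type X but to a nonzero map in type Y, so the under-slide fails in type Y. Rotating the diagram, which exchanges the types, shows the over-slide fails in type X.

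So your Fox-arc relation is available only in type Y. In type X the acting algebra is instead the modified coloring module $\mcol(D)$ of understrands, whose reduced part needs its own identification with $H_1(\Sigma(L);\mathbb{Z})$ via a Wirtinger presentation with understrand generators. To complete your argument, fix type Y (the two types are canonically isomorphic up to sign) and carry out the face-by-face check for the over-slide homotopy there. With the corrected vertex sign, the rest of your outline then goes through.
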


The module structure from this theorem will be defined combinatorially by using maps induced by
dots placed on the edges of a link diagram. When the link $L$ is odd-homologically thin over $\mathbb{Z}$, in the
sense of \cite{Sh2011}, these maps are zero for grading reasons, and thus
the module action from Theorem~\ref{thm:module} is trivial for such links.
On the other hand, we will see that if $L$ is odd-homologically thick over $\mathbb{Z}$,
then $\Lambda^*H_1(\Sigma(L);\mathbb{Z})$ can act nontrivially, even if the link $L$ is a knot.

By \cite{OS2005}, the nonreduced odd Khovanov homology of a link $L$
can be identified with the reduced odd Khovanov homology of the disjoint union $L\cup U$,
where $U$ is an unknot. Theorem~\ref{thm:module} therefore shows:

\begin{restatable}{theorem}{TheoremModuleNonRed}\label{thm:modulenonred}
The odd Khovanov homology of a link $L\subset\mathbb{R}^3$
is a module over the exterior algebra $\Lambda^*H_1(\Sigma(L\cup U);\mathbb{Z})$.
\end{restatable}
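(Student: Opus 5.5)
Theorem~\ref{thm:modulenonred} will follow from Theorem~\ref{thm:module} together with the identification, recalled just above from \cite{OS2005}, of nonreduced odd Khovanov homology with reduced odd Khovanov homology of a split union with an unknot. Concretely, I will take a diagram $D$ of $L$ and form the diagram $D\sqcup O$, where $O$ is a crossingless circle placed far away from $D$. Reducing the odd Khovanov complex of $D\sqcup O$ at a basepoint on $O$ means, in every resolution, taking the quotient (or equivalently the subcomplex) where the circle through the basepoint is labeled in the standard way. That circle is always the isolated circle $O$, because $O$ meets no crossings. The cube of resolutions of $D\sqcup O$ is the cube of $D$ with one extra circle $O$ in every resolution. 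In the exterior-algebra description of the odd chain groups, the generator $a_O$ is factored out in the reduced theory. Moreover, the merge and split maps of $D\sqcup O$ never involve $O$, and the sign assignments for the cube can be chosen to agree with those of $D$. This yields an isomorphism of chain complexes $\overline{C}_{\mathrm{odd}}(D\sqcup O)\cong C_{\mathrm{odd}}(D)$, up to a grading shift, and hence $\overline{OKh}(L\cup U)\cong OKh(L)$. This is the statement cited from \cite{OS2005}, and I will simply invoke it.

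Applying Theorem~\ref{thm:module} to the link $L\cup U$ gives a $\Lambda^*H_1(\Sigma(L\cup U);\mathbb{Z})$-module structure on $\overline{OKh}(L\cup U)$. I will transport this structure through the isomorphism above to obtain the module structure on $OKh(L)$. Since a module structure pulled back along an abelian group isomorphism is again a module structure, nothing further is needed for existence.

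The only point needing care is naturality: the isomorphism should not depend on choices in a way that breaks the statement. In particular it should not depend on where $O$ is placed or on the basepoint. I would remark that different placements of $O$ are related by isotopies and Reidemeister moves of $L\cup U$. By the invariance part of Theorem~\ref{thm:module}, which presumably ensures that the Reidemeister move isomorphisms are module maps, the module structures agree up to these canonical isomorphisms. I expect this compatibility check to be the main obstacle, though the bare existence statement, which is what Theorem~\ref{thm:modulenonred} asserts, needs only the two cited facts.

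As a sanity check, $\Sigma(L\cup U)\cong\Sigma(L)\#(S^1\times S^2)$. Hence $H_1(\Sigma(L\cup U))\cong H_1(\Sigma(L))\oplus\mathbb{Z}$, and the extra $\mathbb{Z}$ should act by the dot on $O$, i.e.\ the familiar $X$-action on nonreduced homology.
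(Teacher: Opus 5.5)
Your argument is correct, and it is the deduction the paper itself points to in the introduction and in Remark~\ref{rem:OKhDU}. Applying Theorem~\ref{thm:module} to $L\cup U$ and transporting along $OKh(L)\cong\overline{OKh}(L\cup U)$ from \eqref{eqn:OKhDU} gives a module structure, and for the bare existence statement nothing more is needed.

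The paper's actual proof runs in the opposite direction, working directly on the nonreduced complex:
\begin{itemize}
\item The dot chain maps satisfy $x_a+x_c=x_b+x_d$ at every crossing.
\item In type Y, Lemma~\ref{lem:typeYslide} lets dots slide over crossings up to homotopy. Hence $x_a+x_c\simeq 2x_b$, and $e\mapsto x_e$ factors through $\col(D)$.
\item Since dots anticommute and square to zero, this gives a $\Lambda^*\col(D)$-action (Proposition~\ref{prop:5:colMod}; in type X one uses $\mcol(D)$ and Lemma~\ref{lem:typeXslide}).
\item Corollary~\ref{cor:ColDH} identifies $\col(D)$ with $H_1(\Sigma(L\cup U);\ZZ)$.
\end{itemize}
Theorem~\ref{thm:module} is then obtained by restricting to $\col(D)^{red}$, whose elements act by differences of dot maps and so preserve the summand $\overline{OKh}(D)$.

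The direct route is self-contained and needs no auxiliary unknot. It also describes the action explicitly by single dot maps on edges of $D$. Your route is shorter only if Theorem~\ref{thm:module} is a black box. Within the paper it is roundabout, because the proof of Theorem~\ref{thm:module} already passes through the nonreduced $\Lambda^*\col(D)$-action. Moreover, to see that your transported structure is the dot-map structure, you need the check in Remark~\ref{rem:OKhDU}.

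Two small corrections. First, your sanity check is off. The dot on $O$ does not act on the reduced theory: with the paper's definition, $x_u$ does not preserve $\overline{\mathcal{F}}_{odd}$, and in the quotient or subcomplex model you describe it acts by zero. The relevant class is the one corresponding to an overstrand $a$ of $D$, namely $a-u\in\col(D\cup U)^{red}$. It acts by $x_a-x_u$ on $\overline{OKh}(L\cup U)$, and hence by $x_a$ (a dot on $L$, not on $O$) on $OKh(L)$. That is the familiar $X$-action.

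Second, Reidemeister invariance of the action is not part of Theorem~\ref{thm:module} as stated. The paper obtains it later from the naturality result Lemma~\ref{lem:intertwine}, where no sign appears because Reidemeister cobordisms have superdegree zero. As you note, it is not needed for existence.
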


While we have formulated this theorem for integer coefficients, its proof remains
valid over any commutative unital ring $\Bbbk$.
In particular, we obtain that the $\mathbb{F}_2$-Khovanov homology of~a~link $L$
carries an action of the exterior algebra of $H_1(\Sigma(L\cup U);\mathbb{F}_2)\cong(\mathbb{F}_2)^{\ell}$,
where $\ell$ denotes the number of link components. This is precisely the well-known action of
\[
\mathbb{F}_2[x_1,\ldots,x_\ell]/(x_1^2=\ldots=x_\ell^2=0),
\]
which was studied, e.g., in \cite{HN2013}. Similarly,
we obtain that the rational odd Khovanov homology of a nonempty link $L$
carries an action of an exterior algebra in \[\dim H_1(\Sigma(L\cup U);\mathbb{Q})=\beta(L)+1\leq\ell\] variables,
where $\beta(L)$ denotes the first Betti number of $\Sigma(L)$ (which is also equal
to the nullity of the symmetrized Seifert pairing).

Unlike even Khovanov homology, odd Khovanov homology comes in two types, called type X and type Y.
It is known \cite{Bei2012,SSS2020,MW2024} that
odd Khovanov homologies of the two types are isomorphic,
where the isomorphism is canonical up to an overall sign
\cite[Theorem~2]{MW2024} 
(this isomorphism is not directly related to the proof of \cite[Lemma~2.4]{ORS2007}, which was incomplete).
We will show:

\begin{restatable}{theorem}{TheoremTypes}\label{thm:types}
The canonical isomorphism between the
odd Khovanov homologies of type X and type Y intertwines the actions
of $\Lambda^*H_1(\Sigma(L\cup U);\mathbb{Z})$.
\end{restatable}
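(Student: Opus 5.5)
The plan is to reduce the statement to a chain-level comparison between the dot maps of the two types. The module action will have been defined by dot maps $x_e$, one for each edge $e$ of a diagram $D$. On each resolution, $x_e$ acts as wedging (or contracting) with the generator attached to the circle containing $e$. The canonical isomorphism $\Phi\colon OKh_X(D)\to OKh_Y(D)$ from \cite[Theorem~2]{MW2024} is induced by a chain map. On each resolution this chain map is a diagonal rescaling of the exterior-algebra basis, with signs that depend on the resolution and on the monomial. I will therefore proceed in three steps.

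\emph{Step 1.} Recall the explicit form of $\Phi$. The two types differ only in the sign assigned to the split and merge maps at certain faces of the cube. By \cite{MW2024}, $\Phi$ can be written as follows: for each vertex $v$ of the cube, a choice of orientations on the resolution crossings determines a sign function $\epsilon_v$ on monomials $a_{i_1}\wedge\cdots\wedge a_{i_k}$. If necessary, $\epsilon_v$ is composed with a grading-dependent automorphism of the form $a\mapsto(-1)^{\binom{|a|}{2}}a$ or similar. The key point is that on each vertex, $\Phi$ is an algebra-type automorphism of $\Lambda^*V(v)$. Concretely, it is a map of the form $\omega\mapsto \pm\,\sigma(\omega)$, where $\sigma$ is a sign twist depending only on the exterior degree.

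\emph{Step 2.} Check that $\Phi_v\circ x_e=\pm x_e\circ\Phi_v$ at each vertex, and determine the sign. The operator $x_e$ has exterior degree $\pm1$, and the twist $\sigma$ depends only on degree. So $x_e$ commutes with $\sigma$ up to a factor $(-1)^{\text{degree}}$. Equivalently, $x_e$ intertwines $\sigma$ with the opposite twist $\bar\sigma$. This sign is a global function of the homological and quantum gradings. I will correct for it by the convention that the dot map on the type Y side already includes this grading sign. I will check this against the paper's definition of the action. If the dot maps of the two types are defined by literally the same formula, I instead expect a degree-dependent sign to appear. That sign is absorbed by the fact that $\Lambda^*H_1$ acts on bigraded homology, and an overall grading-parity twist is an automorphism of the module category compatible with the identification. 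At worst, this step yields that $\Phi$ intertwines the actions up to this grading automorphism, and I would then show that the twist is trivial by examining how $\Phi$ is actually normalized.

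\emph{Step 3.} Pass to homology. Both $\Phi$ and $x_e$ are chain maps, and the edge maps agree on homology with the corresponding elements of $H_1(\Sigma(L\cup U))$ (by the earlier construction of the action). So Step 2 shows that $\Phi$ intertwines the action of each generator. Since $\Phi$ is multiplicative with respect to composition, it intertwines the whole exterior algebra. The overall sign ambiguity of $\Phi$ is irrelevant, since it commutes with everything.

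The main obstacle is Step 2. It requires knowing precisely how the \cite{MW2024} isomorphism acts on each vertex, in particular whether it is genuinely diagonal in the monomial basis, or involves edge-dependent signs that might fail to commute with $x_e$ when $e$ lies on different circles. I would first verify that it is diagonal and that its vertex signs depend only on the degree within each circle's subalgebra, and then do the sign bookkeeping.
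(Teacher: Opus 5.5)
Your argument fails at Step~2, and the problem is not sign bookkeeping. The type~Y action is defined through $\operatorname{Col}(D)\cong H_1(\Sigma(L\cup U);\mathbb{Z})$, where an edge goes to its overstrand. The type~X action is defined through $\operatorname{MCol}(D)\cong H_1(\Sigma(L\cup U);\mathbb{Z})$, where an edge goes to its understrand. So the same chain-level dot map $x_e$ represents \emph{different} homology classes in the two types. In type~Y it is the action of $[\widehat{\alpha}]$ for an arc $\alpha$ running from the unbounded region to $e$ over all intermediate strands; in type~X it is the action for an arc running under them. What must be shown is therefore not $\Phi\circ x_e=\pm x_e\circ\Phi$. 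Rather, $\Phi$ must carry the type~Y map $x_e$ to the type~X map $2x_{a_1}-2x_{a_2}+\cdots+(-1)^{r-1}2x_{a_r}+(-1)^rx_e$, where $a_1,\ldots,a_r$ are the edges the over-arc crosses.

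In fact the relation you aim for in Step~2 holds for \emph{no} isomorphism, even allowing grading-dependent signs. In the paper's example of the $2$-component unlink (Figure~\ref{fig:unlink}), two edges $e,e'$ on the same understrand satisfy $x_ex_{e'}=0$ in type~X, while $x_ex_{e'}\neq 0$ on type~Y homology. Iterating your relation gives $\Phi\circ x_ex_{e'}=\pm x_ex_{e'}\circ\Phi$. The left side vanishes and the right side does not, since $\Phi$ is surjective. So your Step~1 picture of $\Phi$ as a vertex-wise, degree-dependent sign twist on a fixed resolution cube is also wrong. It is close in spirit to \cite[Lemma~2.4]{ORS2007}, whose proof was incomplete. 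Step~3 also silently conflates the two different maps from edges to $H_1$.

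What you are missing is the actual form of the canonical isomorphism, together with the arc description of the action. The isomorphism is the composite $OKh(L,\epsilon_Y)\cong OKh(L',\epsilon'_Y)\cong OKh(L,\epsilon_X)$, where $L'=r(L)$ is $L$ rotated by $180^\circ$ about an axis in the plane of the picture.
\begin{itemize}
\item The first map is a Reidemeister-invariance isomorphism, not a vertex-wise map on a fixed cube. It intertwines the type~Y actions of $[\widehat{\alpha}]$ and $[\widehat{\alpha'}]$, $\alpha'=r(\alpha)$, by naturality of the action under isotopy (Lemma~\ref{lem:intertwine}). No sign appears, since Reidemeister cobordisms have superdegree zero.
\item The second map identifies each resolution of the diagram of $L'$ with the mirror image of the corresponding resolution of $D$, which turns type~Y signs into type~X signs. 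Take $\alpha$ overcrossing everything on its way to $e$, so $[\widehat{\alpha}]$ acts by $x_e$ in type~Y. Then $\alpha'$ undercrosses exactly the edges corresponding to those $\alpha$ overcrosses. By Corollary~\ref{cor:arcaction}, the type~Y formula for $[\widehat{\alpha'}]$ on $L'$ therefore coincides with the type~X formula for $[\widehat{\alpha}]$ on $L$.
\end{itemize}
Since the classes acting by single dot maps in type~Y generate $\Lambda^*H_1(\Sigma(L\cup U);\mathbb{Z})$, this gives the theorem.
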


To prove this theorem, we will describe the module actions from Theorems~\ref{thm:module} and \ref{thm:modulenonred} 
more explicitly in terms of embedded arcs in $\mathbb{R}^3$. Specifically, we will consider an oriented embedded arc $\alpha\subset\mathbb{R}^3$
which meets the given link $L$ along the boundary $\partial\alpha$ and at no other points (see Figure~\ref{fig:arcalpha}).
The full preimage of the arc $\alpha$ in $\Sigma(L)$ is an unoriented simple closed curve $\widehat{\alpha}\subset\Sigma(L)$,
and we will consider two oriented versions of this curve, denoted $\widehat{\alpha}_X$ and $\widehat{\alpha}_Y$ (see Subsection~\ref{subs:arcs} for details).
Let $a$, $b$, $a_1,\ldots,a_r$, and  $b_1,\ldots,b_s$ be the edges of the link diagram shown in Figure~\ref{fig:arcalpha}.
Then the module actions of $[\widehat{\alpha}_X]=\pm[\widehat{\alpha}_Y]\in H_1(\Sigma(L);\mathbb{Z})$ can be described as follows:

\begin{figure}[h]
\includeFig{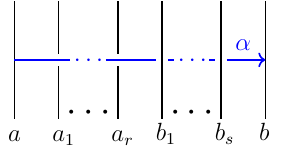}
\caption{The oriented arc $\alpha\subset\mathbb{R}^3$. The vertical lines represent strands of the link $L\subset\mathbb{R}^3$.
In particular, $a_1,\ldots,a_r$ are the edges of the link diagram that $\alpha$
overcrosses, and $b_1,\ldots,b_s$ are the edges that $\alpha$ undercrosses.
In the above picture, the $a_j$ precede the $b_j$, but this is not required in general.
In general, the arc $\alpha$ is allowed
to have self-crossings, but they will not enter our calculations.}\label{fig:arcalpha}
\end{figure}

\begin{restatable}{theorem}{TheoremArcAction}\label{thm:arcaction}
On the reduced type X odd Khovanov homology, $[\widehat{\alpha}_X]$ acts by
\[
-x_a+2x_{a_1}-2x_{a_2}+\ldots+(-1)^{r-1}2x_{a_r}+(-1)^rx_b,
\]
and on the reduced type Y odd Khovanov homology, $[\widehat{\alpha}_Y]$ acts by
\[
-x_a+2x_{b_1}-2x_{b_2}+\ldots+(-1)^{s-1}2x_{b_s}+(-1)^sx_b,
\]
where $x_e$ denotes the dot map assigned to a dot placed on the edge $e$ of the link diagram.
\end{restatable}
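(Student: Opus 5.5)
The plan is to reduce the statement to the combinatorial definition of the module action behind Theorem~\ref{thm:module}. In that definition a class in $H_1(\Sigma(L);\mathbb{Z})$ is presumably represented as an integer combination of edges of the diagram (via a Goeritz/Wirtinger-type presentation of $H_1(\Sigma(L))$), and it acts by the corresponding combination of dot maps $x_e$. The dot relations across crossings then ensure the action factors through $H_1(\Sigma(L))$. So I first compute the homology class $[\widehat{\alpha}_X]$ in that presentation, and then read off its action.

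\textbf{Computing the class.} The preimage of $\alpha$ in $\Sigma(L)$ consists of two lifts $\tilde\alpha$ and $\tau\tilde\alpha$ glued at the branch points over $\partial\alpha$. The orientation convention for $\widehat{\alpha}_X$ fixes which lift is traversed forward. I would decompose $\widehat{\alpha}_X$ along the sheets of the two-fold cover built from the diagram. In the type X convention the cover is cut along overcrossing regions, so the sheet index flips each time $\alpha$ passes over a strand $a_j$. Each passage contributes twice the meridian-type generator of $a_j$: once from each lift, with the same sign because the deck involution acts by $-1$ on the relevant generators. The alternating sign $(-1)^{j-1}$ records the sheet swap. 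The endpoints contribute only a half loop around $a$ and around $b$, which gives coefficients $-1$ and $(-1)^r$. Undercrossings of $b_j$ do not change sheets in this convention, so they contribute nothing. The type Y case is the mirror computation, with the roles of over- and undercrossings exchanged, which yields the formula involving the $b_j$.

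\textbf{Checking independence and reducing to local moves.} I would check that the formula is invariant under isotopy of $\alpha$ relative to $L$, using the dot relations: $x$ on the two sides of a crossing differ by sign or sum relations, and $x_e^2=0$. This lets me reduce to simple local models, for example $\alpha$ a short arc joining two adjacent parallel strands with $r=s=0$. In that model the formula should read $-x_a+x_b$, and this is exactly (up to sign convention) the generator used to define the action. Sliding $\alpha$ over or under a strand changes $r$ or $s$ by one. Comparing the two formulas before and after the slide should then agree modulo the crossing-dot relations $x_{a_j}$ and the standard edge relations, since passing a strand of the arc over $a_j$ is realized in the cover by adding twice the class associated to $a_j$.

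\textbf{Main obstacle.} The hardest step is the sign bookkeeping: matching the orientations $\widehat{\alpha}_X$ and $\widehat{\alpha}_Y$ with the type X/Y sign assignments, and confirming the factor of $2$ and the alternation under the slide move. I would handle this by a careful local computation of a single slide in the branched cover, using the explicit Goeritz-style identification of $H_1(\Sigma(L))$ established earlier.
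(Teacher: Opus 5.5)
Your ``Computing the class'' paragraph is, in heuristic form, the paper's proof. The paper writes out type Y and leaves the mirror type X case to the reader. It makes your sheet-counting precise by homotoping $\alpha$ away from $L$ into $\beta_\epsilon*\mu*\alpha_\epsilon$, where $\alpha_\epsilon$ now ends and $\beta_\epsilon$ starts at $x_0$. Since $\gamma_Y$ and the auxiliary path $\eta_Y$ never undercross, the loop $\mu$ is the Wirtinger word $b_s*\cdots*b_1$. Then $\widetilde{\alpha}_Y=\widetilde{\alpha}_\epsilon+\widetilde{b}_1+t\widetilde{b}_2+\cdots+t^{s-1}\widetilde{b}_s+t^s\widetilde{\beta}_\epsilon$ and $\widehat{\alpha}_Y=(1-t)\widetilde{\alpha}_Y$. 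The extra $2$-handles attached along $\widetilde{g}+t\widetilde{g}$ give $t\widetilde{g}\sim-\widetilde{g}$, which produces your coefficients $2(-1)^{j-1}$. They also show that $(1-t)\widetilde{\alpha}_\epsilon$, a path from $t\widetilde{x}_0$ to $\widetilde{x}_0$ through the branch point on $a$, is homologous to $-\widetilde{a}$, while $t^s(1-t)\widetilde{\beta}_\epsilon\sim(-1)^s\widetilde{b}$; these are your endpoint terms. One then reads off the answer in $\col(D)^{red}$ via Lemma~\ref{lem:ColDH}. So the identification you need is this Wirtinger/coloring one (in type X, the modified presentation and $\mcol(D)$), not a Goeritz-style one. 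The signs you list as the main obstacle drop out of this chain-level formula directly, with no separate local computation.

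Your second paragraph should be discarded, because as written it does not work.
\begin{enumerate}
\item Isotopy invariance cannot reduce an arbitrary arc to a short arc between adjacent strands: $[\widehat{\alpha}_X]$ is itself an isotopy invariant and varies from arc to arc.
\item The proposed base case is not the definition of the action. That an arc which overcrosses nothing represents $b-a$, with exactly this sign, is Remark~\ref{rem:dotdifference}, a special case of the theorem that needs the same lifting computation. ``Up to sign convention'' is not enough, since the statement fixes the orientation of $\widehat{\alpha}_X$.
\item The claim that pushing $\alpha$ through a strand adds twice the class of $a_j$ is false. It would change the coefficient sum by $\pm2$, which is impossible for a difference of two elements of $\mcol(D)^{red}$. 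Past that crossing the lift continues on the other sheet, so every later term changes sign too. For example, in type X an arc from $a$ to $b$ passing under a single strand $c$ acts by $x_b-x_a$, whereas after pushing it through $c$ it acts by $-x_a+2x_c-x_b$, a change of $2(x_c-x_b)$.
\end{enumerate}
The paragraph also conflates such moves, which change the class, with isotopies, under which the formula is invariant precisely by the crossing relations.
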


It was shown in \cite{MW2024,Spy2025} that every smooth link cobordism $F\subset\mathbb{R}^3\times I$
induces a map $OKh(F)$ on odd Khovanov homology, which is well-defined up to an overall sign. This map is related
to Theorem~\ref{thm:arcaction} in the following way. Consider an embedded arc $\alpha\subset\mathbb{R}^3$ as in Figure~\ref{fig:arcalpha}, and let $h\subset\mathbb{R}^3\times I$ be a $3$-dimensional thickening
of the arc $\alpha\times\{1/2\}$ in $\mathbb{R}^3\times I$. We can interpret $h$ as a $3$-dimensional $1$-handle, and by performing surgery along this $1$-handle,
we can attach a horizontal tube $t\subset\partial h$ to the identity link cobordism $\operatorname{id}_L:=L\times I$.
Call the resulting link cobordism $F_\alpha$. A broken surface diagram of $F_\alpha$ is shown in Figure~\ref{fig:tubecobordism}.

\begin{figure}[h]
\includeFig{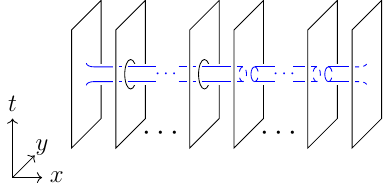}
\caption{Broken surface diagram of the link cobordism $F_\alpha$
for $\alpha$ as in Figure~\ref{fig:arcalpha}.
In this picture, we can assume that the shown portions of
$F_\alpha$ live in the hyperplane $z=0$,
and the unshown portions live in $z<0$.}\label{fig:tubecobordism}
\end{figure}

The odd Khovanov map induced by $F_\alpha$ can now be described as follows:

\begin{restatable}{theorem}{TheoremTubeAction}\label{thm:tubeaction}
The map $OKh(F_\alpha)$ coincides with the action of $\pm[\widehat{\alpha}]$.
\end{restatable}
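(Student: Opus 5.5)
We will compute $OKh(F_\alpha)$ directly from a movie presentation of $F_\alpha$ and compare it with the formula from Theorem~\ref{thm:arcaction}.

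\textbf{The movie.} Reading Figure~\ref{fig:tubecobordism} as a movie, $F_\alpha$ is the identity cobordism $L\times I$, modified as follows.
\begin{itemize}
\item A small trivial circle is born as a $0$-handle near the edge $a$.
\item A saddle merges this circle into $a$, and a finger of the circle is isotoped along $\alpha$ through Reidemeister~II moves with the strands $a_1,\dots,a_r$ (where the tube passes over) and $b_1,\dots,b_s$ (where it passes under).
\item A second saddle at $b$ splits the circle back off.
\item The finger is retracted, and the circle dies as a $2$-handle.
\end{itemize}
Equivalently, $F_\alpha$ is the composite of a tube surgery, which is a merge followed by a split along a thin band. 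It is cleaner to slide all Reidemeister moves to one side. Then $F_\alpha$ factors as a saddle merging $a$ and $b$ along the band $\alpha$, followed by the reverse saddle. The band-isotopy part cancels up to sign, because the Reidemeister maps are invertible and functoriality holds up to sign by \cite{MW2024,Spy2025}.

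\textbf{Neck-cutting.} Next we use the odd analogue of the neck-cutting relation. For a merge followed by a split along the same band, the composite equals a signed combination of dot maps on the two feet. Locally, when $a$ and $b$ are adjacent with no crossings in between, we expect the composite split$\circ$merge to be $\pm(x_b - x_a)$, or a sign variant. This can be checked directly in the cube of resolutions using the ORS merge and split maps: the split inserts $a_i - a_j$, and the merge identifies generators. With the normalization $-x_a+(-1)^r x_b$ in the reduced theory, the base case $r=s=0$ is therefore a direct local computation.

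\textbf{Induction on crossings.} For general $\alpha$, we induct on $r+s$ by sliding the band over or under one strand at a time. Each pass of the band foot across a strand $c$ changes the composite by the commutator of the dot maps with a Reidemeister~II move. We would invoke the paper's edge-dot relations: at a crossing, dots on adjacent edges satisfy $x_{\text{in}} \pm x_{\text{out}}$-type relations up to homotopy. These should produce a correction of $\pm 2x_{c}$ when the band passes over $c$ in type~X, and no correction when it passes under, and symmetrically for type~Y. This matches the alternating coefficients in Theorem~\ref{thm:arcaction} and yields the claim up to an overall sign. Consistency across the two types follows from Theorem~\ref{thm:types}.

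\textbf{Main obstacle.} The main obstacle is sign bookkeeping in the neck-cutting step. The odd theory requires choosing orientations of the saddles via the type~X/Y arrows, and the $\pm$ ambiguity of $OKh$ makes only the overall sign irrelevant; relative signs between the $x_{a_j}$ terms are not. I would first fix decorations so that the band's saddles carry arrows parallel to $\alpha$. I would then verify the single-crossing step carefully in the cube of resolutions, after which the induction is formal.
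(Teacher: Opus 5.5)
Your plan (finger move along $\alpha$, a sheet-free tube at the tip, then transport back) can be salvaged, but the step where the crossings of $\alpha$ enter is wrong as written. The Reidemeister part does not ``cancel up to sign.'' If $\Psi$ is the finger move and $T'$ the tube in the moved diagram, then $OKh(F_\alpha)=\pm OKh(\Psi)^{-1}\circ(x_b-x_{\mathrm{tip}})\circ OKh(\Psi)$, and invertibility does not make this conjugation trivial. If it did, the answer would not depend on how $\alpha$ crosses $L$, whereas by Theorem~\ref{thm:arcaction} the action of $[\widehat{\alpha}]$ does. All crossing terms come from this conjugation, and the mechanism is neither a ``commutator'' nor an additive ``correction of $\pm 2x_c$.'' The tip edge created by a Reidemeister~II move has no counterpart before the move. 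So you must first rewrite it using Lemma~\ref{lem:typeXslide} (resp.\ Lemma~\ref{lem:typeYslide}) and \eqref{eqn:abcd}: $x_{\mathrm{tip}'}\simeq 2x_c-x_{\mathrm{tip}}$ when the finger passes over $c$ in type X (under $c$ in type Y), and $x_{\mathrm{tip}'}\simeq x_{\mathrm{tip}}$ otherwise. Only then can you push the surviving dots through the Reidemeister isomorphism by Lemma~\ref{lem:xcommute} (superdegree $0$). This reflection $T\mapsto 2x_c-T$ is what does the work:
\begin{itemize}
\item it yields the alternating coefficients;
\item it keeps the coefficient sum of $x_b-T$ equal to zero;
\item it makes self-crossings of $\alpha$ drop out, since the tip crosses both legs of the finger and the two reflections cancel.
\end{itemize}
Iterating it reproduces Theorem~\ref{thm:arcaction} up to the overall sign $(-1)^r$ (resp.\ $(-1)^s$). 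So the relative signs of the $x_{a_j}$ are forced and are not part of the sign problem. Separately, your first movie, with a birth and a death, has $\chi=0$, whereas $\chi(F_\alpha)=-2$; only your ``two saddles along a band'' description is $F_\alpha$.

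What your proposal leaves open is the base case. You need the composite of the two saddle chain maps of \cite{MW2024} to equal $x_b-x_a$ up to a sign \emph{independent of the vertex} of the cube, and per-vertex agreement up to sign does not give that. The paper avoids saddle signs altogether. Lemma~\ref{lem:horizontaltube} isotopes a sheet-free horizontal tube into one containing a vertical neck. The vertical neck-cutting Lemma~\ref{lem:verticaltube} then involves only an unsigned birth and a death, whose vertex sign $(-1)^{S(D,\cdot)}$ cancels the one built into $x_e$. The paper also does not induct on crossings. It uses the well-definedness of the decorated maps $OKh(F,c)$ under isotopy (Theorem~\ref{thm:functor}) to relate $F_\alpha$ to a sheet-free tube, and the resulting dot difference to $(\operatorname{id}_L,[\widehat{\alpha}])$, using Remark~\ref{rem:dotdifference}. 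The crossing-by-crossing bookkeeping you propose is already encoded in $\text{Col}(F)=H_1(\Sigma(F\cup\operatorname{id}_U);\ZZ)$ and Lemma~\ref{lem:zcommute}. Your induction, repaired as above and with Lemma~\ref{lem:horizontaltube} supplying the base case, is a hands-on substitute for that functoriality step.
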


To prove this theorem, we will introduce a category $\mathcal{C}\mathit{ob}^4_\Lambda$ of decorated link cobordisms.
The morphisms in this category are pairs $(F,c)$, where $F\subset\mathbb{R}^3\times I$ is a smooth link cobordism,
and $c$ is an element of the exterior algebra $\Lambda^*H_1(\Sigma(F\cup\operatorname{id}_U);\mathbb{Z})$. Such pairs $(F,c)$ will be
considered up to an equivalence relation, which we will describe in Subsection~\ref{subs:decorated}. We have:

\begin{restatable}{theorem}{TheoremFunctor}\label{thm:functor}
Up to sign, odd Khovanov homology extends to a functor on $\mathcal{C}\mathit{ob}^4_\Lambda$.
\end{restatable}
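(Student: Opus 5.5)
The plan is to define the functor on decorated cobordisms by composing the known cobordism map with the module action, and then to check that this is compatible with composition and with the equivalence relation.

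\textbf{Definition on morphisms.} For a decorated cobordism $(F,c)$ from $L_0$ to $L_1$, first use the inclusions $\Sigma(L_i\cup U)\hookrightarrow\Sigma(F\cup\operatorname{id}_U)$ as the two boundary components of the branched double cover. Then write $c$ as a sum of products of classes pushed forward from $H_1(\Sigma(L_0\cup U))$. Finally set
\[
OKh(F,c)\defeq OKh(F)\circ\rho_{L_0}(c_0),
\]
where $\rho_{L_0}$ is the action from Theorem~\ref{thm:modulenonred}.

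\textbf{Main step: naturality.} I want to show that for every $\gamma_0\in H_1(\Sigma(L_0\cup U))$ and $\gamma_1\in H_1(\Sigma(L_1\cup U))$ with the same image in $H_1(\Sigma(F\cup \operatorname{id}_U))$,
\[
OKh(F)\circ\rho(\gamma_0)=\pm\rho(\gamma_1)\circ OKh(F).
\]
Given this, the following all hold up to sign, which is the ambiguity in \cite{MW2024,Spy2025}:
\begin{itemize}
\item the definition does not depend on the choice of lift $c_0$;
\item it respects the equivalence relation of Subsection~\ref{subs:decorated}, which I expect to identify classes along $F$;
\item it is compatible with composition, since pushing forward through a gluing is the Mayer--Vietoris map.
\end{itemize}

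\textbf{Reduction to elementary cobordisms.} Decompose $F$ into movie moves: Reidemeister moves, births, deaths and saddles. The kernel and relations of $H_1(\partial)\to H_1(\Sigma(F))$ are then generated locally, so naturality only needs to be checked for each elementary piece.

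For a Reidemeister move, the branched cover is a product. I would use Theorem~\ref{thm:arcaction}: represent generators by arcs $\alpha$ disjoint from the move region. The dot maps on the edges $a,b,a_j$ then commute with the Reidemeister chain homotopy equivalences, because dot maps on far-away edges commute with local maps up to the standard odd signs.

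For a birth or death, the cover changes by adding or removing an $S^2$ component in the reduced sense. Arcs avoiding the new circle give the same formula, and I need to check that $x$ on the new unknot acts compatibly with the unit and counit.

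For a saddle, the cover changes by attaching a $2$-handle along the lift of the saddle arc $\alpha_s$. Naturality then reduces to two facts:
\begin{itemize}
\item arcs disjoint from $\alpha_s$ commute with the saddle map, since dots away from the merge/split commute up to sign;
\item the class $[\widehat{\alpha_s}]$ dies: the dot-map combination $-x_a+x_b$ (with $r=s=0$) composed with the saddle must vanish.
\end{itemize}
The second fact is the local odd relation that dots on the two sides of a saddle agree. For arcs crossing $\alpha_s$, I would isotope them off first using the Dehn-twist-type relations encoded in the factors of $2$ in Theorem~\ref{thm:arcaction}.

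\textbf{Main obstacle.} I expect the hardest part to be sign bookkeeping. The exterior-algebra action anticommutes, and the odd cobordism maps carry their own sign conventions. I must make sure the ambiguity is a single global sign per morphism and not a sign depending on $c$; otherwise linearity in $c$ would fail.

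To handle this, I would first fix $F$ and use that $\rho$ is a genuine algebra action (Theorem~\ref{thm:modulenonred}). This reduces the problem to generators $\gamma$. For each generator I would show the commutation sign is a global sign, independent of $\gamma$, by using Theorem~\ref{thm:types} and the tube description from Theorem~\ref{thm:tubeaction}. That description rewrites $\rho(\gamma)$ as $OKh(F_\alpha)$, so naturality becomes functoriality of $OKh$ under an isotopy sliding the tube $t$ across $F$.
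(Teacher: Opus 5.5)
Your definition is where the proof breaks. Setting $OKh(F,c)\defeq OKh(F)\circ\rho_{L_0}(c_0)$ requires every $c$ to lift to $\Lambda^*H_1(\Sigma(L_0\cup U);\ZZ)$. Your reduction step makes the analogous assumption that $H_1$ of the boundary surjects onto $H_1(\Sigma(F\cup\operatorname{id}_U);\ZZ)$. Neither holds once $F$ has births, since a birth attaches a $1$-handle to the branched double cover.

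Two examples show the failure. First, let $F=\operatorname{id}_L\sqcup S$, with $S$ a small unknotted $2$-sphere born and killed away from $L$. The oversheet of $S$ is a class in $\col(F)=H_1(\Sigma(F\cup\operatorname{id}_U);\ZZ)$ that is not in the image of either end: it is detected by the homomorphism sending that sheet to $1$ and all other oversheets to $0$. Second, for a $2$-knot $F\colon\emptyset\to\emptyset$ one has $\Lambda^*H_1(\Sigma(U);\ZZ)=\ZZ$ concentrated in degree $0$, while $H_1(\Sigma(F\cup\operatorname{id}_U);\ZZ)$ has rank one.

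These are exactly the decorated cobordisms used to define $n(F)$ and to prove Theorem~\ref{thm:ribbonknot}. So your assignment is undefined on the morphisms that matter; on classes that do lift to $L_0$ it agrees with the paper's. The missing idea is to place dots at \emph{intermediate} frames. By Lemma~\ref{lem:ColFH}, $\col(F)$ is a quotient of $\bigoplus_i\col(L_i)$ for a movie $L_0,\ldots,L_n$. Hence $\Lambda^*\col(F)$ is spanned by tensors $e_{I_n}\otimes\cdots\otimes e_{I_0}$, and $OKh(F,c)$ is defined as the alternating composite \eqref{eqn:OKhFc}. That composite interleaves the elementary maps $OKh(F_i)$ with dot maps $z_e=(-1)^{|F_{\leq i}|}x_e$ on the frames $L_i$.

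Your treatment of signs also has to be replaced. Naturality up to an unspecified sign per generator cannot give linearity in $c$. Nor can it give the composition rule $(F,c)\circ(F',c')=(F\circ F',(-1)^{|c||F'|}c\wedge c')$. Invoking Theorems~\ref{thm:types} and \ref{thm:tubeaction} is circular: both are proved using the functor, via Reidemeister invariance of the action and well-definedness of $OKh(S,d)$ in Lemma~\ref{lem:horizontaltube}. Theorem~\ref{thm:tubeaction} also only holds up to sign.

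The input that makes everything work is the exact chain-level identity of Lemma~\ref{lem:xcommute}. If $F$ contains a vertical segment $\{p\}\times I$ avoiding double points, then $x_{e'}\circ OKh(F)=(-1)^{|F|}OKh(F)\circ x_e$. This holds because a dot is odd and every component of $\llbracket F\rrbracket$ has shifted superdegree $|F|$. With the normalization $z_e$ this becomes strict commutation (Lemma~\ref{lem:zcommute}). That gives well-definedness of \eqref{eqn:OKhFc} and the composition law directly, with naturality (Lemma~\ref{lem:intertwine}) as a corollary. No move-by-move analysis of saddles, births, or arcs crossing the saddle arc is needed.

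Finally, compatibility with $(F,c)\sim(\phi(F),\widetilde{\phi}_*(c))$ is no longer automatic once non-boundary classes are allowed. The paper reduces $\phi$ to Carter--Saito moves and distant time-reorderings. It then uses Lemma~\ref{lem:CS} to write each $c$ as a sum of terms $c_3\wedge c_1$ supported away from the move. Exchanging the two sides of a move then changes $OKh(F,c)$ by a sign independent of $c$.
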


In Subsection~\ref{subs:dottedlinkcobordisms}, 
we will see that $\mathcal{C}\mathit{ob}^4_\Lambda$ is equivalent to a category
$\mathcal{C}\mathit{ob}^4_\bullet$ of dotted link cobordisms. In the latter category,
link cobordisms are allowed to be decorated by finitely many distinct dots, which are
placed on the facets of their broken surface diagrams. In general, dots
are not allowed to transition freely from one facet to another. However, we will see
that if $S$ is a broken surface diagram of a $2$-knot $F\subset\mathbb{R}^3\times I$, 
and $d$ is a single dot placed on one of the facets of $S$, then the induced
odd Khovanov map $OKh(S,d)$ is independent of the placement of the dot $d$.
In particular, this map is given by multiplication by $\pm n(F)\in\mathbb{Z}$ for a
positive integer $n(F)\geq 0$.
By results from \cite{Ra2005,Ta2005},
the modulo $2$ reduction of the $n(F)$ is always nonzero,
and hence the invariant $n(F)$ is always an odd number. We can further prove:

\begin{restatable}{theorem}{TheoremRibbonKnot}\label{thm:ribbonknot}
If $F$ is a ribbon 2-knot, then $n(F)$ agrees with $|H_1(\Sigma(F);\mathbb{Z})|$.
\end{restatable}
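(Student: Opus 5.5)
A ribbon $2$-knot $F$ admits a presentation as follows. Start with $m+1$ trivial $2$-spheres, realized movie-wise as births of $m+1$ unknotted circles. Connect them by $m$ tubes along embedded arcs $\alpha_1,\ldots,\alpha_m$ in $\mathbb{R}^3$; each arc runs between two of the circles and may pass over and under the others. Finally cap off with deaths. Equivalently, $F$ is obtained from a trivial link of $2$-spheres by attaching $1$-handles, so we may write $F$ as a composite: births to the unlink $U_{m+1}$, then the tubed cylinder, then deaths. The tubed cylinder is
\[
F_{\alpha_1}\circ\cdots\circ F_{\alpha_m},
\]
where the $\alpha_j$ may be taken disjoint and attached at different heights. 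By Theorem~\ref{thm:tubeaction}, each $OKh(F_{\alpha_j})$ is, up to sign, the action of $[\widehat{\alpha}_j]\in H_1(\Sigma(U_{m+1}\cup U);\mathbb{Z})$. Since the same link diagram is carried throughout the middle portion, functoriality (Theorem~\ref{thm:functor}) gives
\[
OKh(F)=\pm\,\epsilon\circ\bigl([\widehat\alpha_1]\wedge\cdots\wedge[\widehat\alpha_m]\bigr)\cdot\iota,
\]
where $\iota$ and $\epsilon$ are the birth and death maps. Adding a dot turns this into a closed dotted sphere evaluation, which computes $n(F)$.

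The plan is to compute in $\Lambda^*H_1(\Sigma(U_{m+1}\cup U))$. Here $\Sigma(U_{m+1}\cup U)$ is a connected sum of $m+1$ copies of $S^1\times S^2$, so
\[
H_1\cong\mathbb{Z}^{m+1},
\]
with generators $y_0,\ldots,y_m$ given by lifts of arcs from the extra unknot $U$ to each component. Only the class of $[\widehat\alpha_1]\wedge\cdots\wedge[\widehat\alpha_m]$ modulo the relations imposed by the birth and death caps matters. The unreduced homology of the unlink is an exterior algebra, and the birth/death sandwich (the dotted sphere evaluation) extracts the coefficient of a top-degree monomial. Concretely, I would express each $[\widehat\alpha_j]$ in the basis $y_i$ using the crossing data. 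An arc joining components $p$ and $q$ lifts to $\pm y_p\pm y_q$, corrected by the classes contributed by the components it passes under or over; this is visible from the dot-map formula in Theorem~\ref{thm:arcaction}. The resulting $m\times(m+1)$ integer matrix $A$ then satisfies
\[
n(F)=\bigl|\text{some maximal minor of }A\bigr|,
\]
namely the minor obtained after quotienting by the diagonal class that acts trivially in the reduced theory.

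The final step is to identify this minor with $|H_1(\Sigma(F);\mathbb{Z})|$. $\Sigma(F)$ is obtained from the double cover of $S^4$ over the trivial $2$-link, namely $\#^m(S^1\times S^3)$ up to the correct count, by surgeries along the lifts of the $1$-handles. Thus $H_1(\Sigma(F))$ is the cokernel of the same matrix $A$ (reduced), so its order is $|\det|$ when finite, and it is finite because $F$ is a knot. A cleaner route uses the ribbon disk $D\subset B^4$ with $\partial$-double $F$: then
\[
H_1(\Sigma(F))=H_1(\Sigma(D))
\]
is presented by the tube arcs via Wirtinger-type relations in the $\mathbb{Z}/2$-cover. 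I would check that this presentation matrix coincides with the coefficient matrix read off from Theorem~\ref{thm:arcaction}.

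The main obstacle is this last matching, together with confirming that the closed-surface evaluation really returns the determinant rather than a single matrix entry. I would first verify the identification on the spun trefoil, where $m=1$ and the expected answer is $3$. I would then argue in general by comparing the signs $\pm2$ in the formula of Theorem~\ref{thm:arcaction} with the deck-transformation sign in the lifted Wirtinger relations. Sign ambiguities are harmless, since only the absolute value is claimed.
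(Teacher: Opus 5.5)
Your proposal is correct and follows the paper's proof. Theorem~\ref{thm:tubeaction} turns the tubes into the action of $[\widehat\alpha_1]\wedge\cdots\wedge[\widehat\alpha_m]$. The caps then kill every monomial that leaves some sphere undotted, while singly dotted spheres give $\pm1$, so $n(F)$ is a maximal minor of your $m\times(m+1)$ matrix. All these minors agree up to sign because its rows sum to zero, and they equal the paper's $|\det A|$. Finally, $A$ presents $H_1(\Sigma(F);\mathbb{Z})$ because the handles in the branched cover are attached along the same curves $\widehat\alpha_j$; the paper shows this for $\Sigma(B^4,C)$ and passes to $F=\overline{C}\circ C$ by Mayer--Vietoris, whereas your surgery description of $\Sigma(F)$ gives it directly.

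Hence the two ``obstacles'' you flag are not real. The determinant comes out of this exterior-algebra expansion automatically. No comparison with lifted Wirtinger relations or test on the spun trefoil is needed either, since $A$ is by definition the coordinate matrix of the very classes $[\widehat\alpha_j]\in H_1(\Sigma(L);\mathbb{Z})$ that those handles kill.
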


Our proof of this theorem was already announced in our paper \cite{MW2024} and uses the module structure on odd Khovanov homology.
As such, our proof is purely combinatorial. In \cite{MW2024}, the authors also conjectured that the
conclusion of Theorem~\ref{thm:ribbonknot} holds true for any smooth $2$-knot $F$.
Recently, Spyropoulos-Vidyarthi-Zhang \cite{SVZ2026} found an analytic proof of this conjecture, based on Daemi's spectral sequence
from reduced odd Khovanov homology to
oriented plane Flower homology \cite{Dae2015}.
In our upcoming paper \cite{MW2026b}, we will give a different and purely combinatorial proof of the same result for the special case
where $F$ is an even-twist spun knot.
In the present paper, we will further use a modification of our proof of Theorem~\ref{thm:ribbonknot}
to show a partial odd analog of Levine-Zemke's main result from \cite{LZ2019}:

\begin{restatable}{theorem}{TheoremRibbonConcordance}\label{thm:ribbonconcordance}
Any ribbon concordance induces an injective map on odd Khovaonv homology with rational coefficients.
The same is true for coefficients in $\mathbb{Z}_{2^k}$.
\end{restatable}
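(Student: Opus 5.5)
Throughout, let $C\colon L_0\to L_1$ be a ribbon concordance and $\overline{C}\colon L_1\to L_0$ its mirror-reverse. I would follow Levine--Zemke's strategy: show that the composite $OKh(\overline{C})\circ OKh(C)$ is an isomorphism after tensoring with $\mathbb{Q}$ or $\mathbb{Z}_{2^k}$. This immediately gives injectivity of $OKh(C)$.

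\textbf{Step 1: reduce the doubled cobordism to tubes.} Present $C$ by $n$ births (index-$0$ critical points) and $n$ bands (index-$1$). Then $\overline{C}\circ C$ is isotopic rel boundary to $\operatorname{id}_{L_0}$ with $n$ unknotted $2$-spheres $S_1,\dots,S_n$ attached. Each $S_i$ is tubed to $\operatorname{id}_{L_0}$ or to another $S_j$ along tubes that follow the bands. This is the standard Zemke/Levine--Zemke picture. The tubes can be pushed into a horizontal slice, where they become tubes of the kind in Theorem~\ref{thm:tubeaction}. Each is determined by an arc $\alpha$ running from a component into a sphere, and may thread through the other bands and link strands.

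\textbf{Step 2: evaluate with the decorated-cobordism functor of Theorem~\ref{thm:functor}.} Work in $\mathcal{C}\mathit{ob}^4_\Lambda$. I would use the neck-cutting and sphere relations of odd Khovanov homology, which hold up to sign for undotted spheres: an undotted sphere evaluates to $0$, a once-dotted sphere to $\pm1$. Cutting each tube then replaces the tube with a sum of terms, each carrying a dot on one side. By Theorem~\ref{thm:tubeaction}, a tube along $\alpha$ contributes the action of $\pm[\widehat{\alpha}]$ from Theorem~\ref{thm:arcaction}.

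I expect the outcome, after removing the spheres, to be the action on $OKh(L_0)$ of a single element of $\Lambda^*H_1(\Sigma(L_0\cup U))$, of the form
\[
\pm\bigl(m\cdot 1+\text{terms of positive exterior degree}\bigr),
\]
where $m$ is a determinant-type count. This is the same computation as in the proof of Theorem~\ref{thm:ribbonknot}, where for $L_0=\emptyset$ (the $2$-knot case) the scalar comes out as $|H_1(\Sigma(F))|$. Here $m$ should be the order, or a related determinant, of the cokernel of the map $H_1(\Sigma(L_0\cup U))\to H_1(\Sigma(L_1\cup U))$ induced through $\Sigma(C)$. Since $\Sigma(C)$ is a branched double cover of a ribbon concordance, $m$ is odd.

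\textbf{Step 3: conclude invertibility.} The positive-degree part of the exterior algebra is nilpotent. Hence $m+\text{(nilpotent)}$ acts invertibly on $OKh(L_0;R)$ whenever $m$ is a unit in $R$. Both $R=\mathbb{Q}$ and $R=\mathbb{Z}_{2^k}$ qualify because $m$ is odd. So $OKh(\overline{C})\circ OKh(C)$ is an automorphism and $OKh(C)$ is injective. The same scheme explains why the argument fails over $\mathbb{Z}$ or $\mathbb{Z}_p$ with $p$ odd: there $m$ need not be a unit.

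\textbf{Main obstacle.} The hardest part is Step 2: making the neck-cutting bookkeeping precise with signs in the odd setting. The chosen resolution of the $n$ tubes must yield an invertible-plus-nilpotent element. My first attempt would be to expand the product of the $n$ tube actions as a determinant, exactly as in the ribbon $2$-knot case. I would then show that the degree-zero coefficient is $\pm\det$ of a matrix whose reduction mod $2$ is the identity. This uses that the $\mathbb{F}_2$-computation recovers the even-theory Levine--Zemke identity $\overline{C}\circ C=\operatorname{id}$.
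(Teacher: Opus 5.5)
Your proposal is correct and follows essentially the paper's route: the paper also views $\overline{C}\circ C$ as $\operatorname{id}_{L_0}\cup S_1\cup\cdots\cup S_m$ with tubes attached, expands $[\widehat{\alpha}_1]\wedge\cdots\wedge[\widehat{\alpha}_m]$ as in the ribbon $2$-knot proof, and uses Levine--Zemke modulo $2$ to see that the resulting integer $a$ is odd. The only difference is that your allowance for extra positive-degree (nilpotent) terms is unnecessary: every term that omits a dot on some sphere $S_j$ leaves an isolated undotted sphere and so vanishes, which gives exactly $OKh(\overline{C}\circ C)=\pm a\cdot\operatorname{id}$, with $a=\pm|H_1(\Sigma(C),\partial_-\Sigma(C);\mathbb{Z})|$ by the paper's remark.
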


By a ribbon concordance, we here mean a smooth oriented genus zero link cobordism $C\subset\mathbb{R}^3\times I$ for
which the projection onto $I$ has no local maxima in $C\setminus\partial C$.
If $\partial_-\Sigma(C)$ denotes the lower boundary of the branched double-cover $\Sigma(C)$, and
$\overline{C}$ denotes the reflection of $C$ along $\mathbb{R}^3\times\{1/2\}$, then
we will actually show that the map $OKh(\overline{C}\circ C)$
is given by multiplication by $\pm a$ for
\[
a=|H_1(\Sigma(C),\partial_-\Sigma(C);\ZZ)|.
\]
Because of the main result from \cite{LZ2019}, the reduction of $a$ modulo $2$ is always nonzero,
and hence $a$ is itself nonzero, and always odd. In particular, $a$ is invertible over $\mathbb{Q}$
and over $\mathbb{Z}_{2^k}$, and this implies Theorem~\ref{thm:ribbonconcordance}.
In the special
case where $a=1$, the conclusion of the theorem also holds over integer coefficients.

Finally, we remark that a variant of our Theorem~\ref{thm:module} first appeared in the first
author's PhD Thesis \cite{Mig24} (see also \cite{WehrliTalk}, where this theorem was discussed in its present version). Recently, Ebert-Schelstraete~\cite{ES2025} independently
discovered an action of the Lie superalgebra $\mathfrak{gl}_{1|1}$ on odd Khovanov homology, which turns out to be related to
our module action from Theorem~\ref{thm:module}.

\noindent
{\bf Conventions.}
Throughout this paper, 
we will assume that link cobordisms are smooth and oriented.
Given a link $L\subset\mathbb{R}^3$, we will denote by $\Sigma(L)$ the branched double-cover of $S^3$,
branched along the link $L$, and we will assume that $\Sigma(L)$ is equipped with a distinguished preimage, $\widetilde{x}_0\in\Sigma(L)$,
of the basepoint $x_0:=\infty\in S^3$. This will ensure that $\Sigma(L)$ is unique up to unique isomorphism
of based branched covering spaces. Given a link cobordism $F\subset\mathbb{R}^3\times I$, we will
similarly denote by $\Sigma(F)$ the branched double-cover of $S^3\times I$, branched along $F$, and we will assume that
$\Sigma(F)$ is equipped with a distinguished preimage of $(x_0,1/2)$. Note that this preimage also determines a distinguished
preimage of the segment $\{x_0\}\times I$, and hence of the basepoints $(x_0,0)$ and $(x_0,1)$.
While almost all of our results will be formulated over integer coefficients, our results that hold over the integers
will remain valid over any commutative unital ring $\Bbbk$.

\noindent
{\bf Organization.}
The remainder of this paper is organized in the following manner.  
In \textbf{Section~\ref{s:preliminaries}}, we review Putyra's
odd chronological cobordism category,
and we recall the definitions of odd Khovanov homology
and of the coloring module of a link diagram.
In \textbf{Section~\ref{s:module}}, we
introduce dot chain maps on the odd Khovanov bracket and, in Subsection~\ref{subs:moduledef},
prove
Theorems~\ref{thm:module} and \ref{thm:modulenonred} about the
module structure on odd Khovanov homology.
In the next subsection, we prove Theorem~\ref{thm:arcaction}
about the description of the module structure in terms of
embedded arcs, and we then use this description
to prove Theorem~\ref{thm:types}
about the equivalence of the module structures in types X and Y.
In the last two subsections of Section~\ref{s:module},
we consider two examples of links where the module structure from Theorem~\ref{thm:module}
is nontrivial: the $2$-component unlink, and the $(3,3,-3)$-pretzel knot.
In \textbf{Section~\ref{s:cobordism}}, we define the coloring module of a link cobordism,
develop the decorated link cobordism category \mbox{$\mathcal{C}\mathit{ob}^4_\Lambda$} and
the dotted link cobordism category $\mathcal{C}\mathit{ob}^4_\bullet$,
and, in Subsections~\ref{subs:oddmaps} and \ref{subs:isotopyinvariance}, prove Theorem~\ref{thm:functor}
about the extension of the odd Khovanov functor to \mbox{$\mathcal{C}\mathit{ob}^4_\Lambda$}.
In Subsection~\ref{subs:tubes}, we will then use this extension to prove
Theorem~\ref{thm:arcaction} about the correspondence
between embedded tubes and arcs.
In \textbf{Section~\ref{s:okjnumbers}}, we introduce 
the $2$-knot invariant $n(F)$ and prove Theorems~\ref{thm:ribbonknot} and \ref{thm:ribbonconcordance}.
Finally, in \textbf{Section~\ref{s:oddend}}, we revisit the computations that prove that odd Khovanov homology is not functorial in $S^3\times I$.
We also describe two modifications of
our constructions that will allow us to eliminate signs that occur in the formula
for the composition in \mbox{$\mathcal{C}\mathit{ob}^4_\Lambda$}
and in naturality result for the module structure on odd Khovanov homology.

\noindent
{\bf Acknowledgments.}
The second author would like to thank Andy Manion, Jake Rasmussen,
Dean Spyropoulos, and Matt Stoffregen for valuable discussions.
He was also partially supported by
a grant from the Simons Foundation (\#632059 Stephan
Wehrli).

\section{Preliminaries}\label{s:preliminaries}

\subsection{Chronological Cobordisms}\label{subs:chronological}
Chronological cobordisms were introduced by Putyra in \cite{Pu2015} in the context of his
construction of a generalized Khovanov bracket.
In this paper, we will assume that chronological cobordisms are embedded, and
we will use the following definition from \cite{MW2024}:

\begin{definition}\label{def:chroncob}
 A chronological cobordism is a smooth properly embedded compact surface $S\subset\mathbb{R}^2\times I$
such that the following hold:
\begin{enumerate}
\item The projection $\mathbb{R}^2\times I\rightarrow I$ restricts to a separating Morse function on $S$.
\item The index 1 and 2 critical points of this Morse function are equipped with orientations of their
descending manifolds.
\end{enumerate}
\end{definition}
Some examples of chronological cobordisms are shown below, where the arrows
indicate the chosen orientations at the critical points:
\begin{equation*}
	\adjustbox{valign = c}{\includegraphics[scale=1.25]{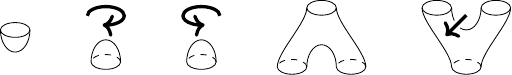}}
\end{equation*}
When no arrows are shown in a picture, we will assume by default that deaths are oriented clockwise, and that saddles are oriented to the front or to the right, whichever makes sense. We will often consider chronological cobordisms
up to the following types of isotopies:

\begin{definition}
A chronology-preserving isotopy of $S\subset\mathbb{R}^2\times I$ is
a smooth ambient isotopy which fixes $\partial S$, and which preserves the weak order on $\mathbb{R}^2\times I$
given by $(x,t)\leq (x',t')$ iff $t\leq t'$.
\end{definition}

Following \cite{Pu2015}, we will define the chronological degree of a chronological cobordism $S$ as the pair $(a,b)\in\ZZ\times\ZZ$ given by
\[
(a,b):=\bigl(\#\mbox{births}-\#\mbox{merges},\#\mbox{deaths}-\#\mbox{splits}\bigr),
\]
where we are using the every critical point in $S$ corresponds to a birth, a death, a merge, or
a split. Note that $(a,b)$ is preserved under arbitrary isotopies of $S$ rel boundary
because it can be described topologically as
\[
(a,b)=\bigl((\chi+D)/2,(\chi-D)/2\bigr),
\]
where $\chi$ is the Euler characteristic of the orientable surface $S$,
and $D$ is the difference between the number of top boundary components of $S$ and
the number of bottom boundary components of $S$.
In this paper, we will use two specializations of the chronological degree:
\begin{itemize}
\item the quantum degree $a+b=\chi$,
\item the superdegree $|S|\in\mathbb{Z}_2$, which is defined as the modulo $2$ reduction of $b$.
\end{itemize}
Note that birth and merge cobordismss satisfy $b=0$, and thus have even superdegree,
whereas death and split cobordisms satisfy $b=\pm 1$, and thus
have odd superdegree.

We can now define
the odd Putyra category \mbox{$o\:\!\mathcal{C}\mathit{ob}^3_{/\ell}$}  \cite{Pu2015},
which can be seen as an odd analog of Bar-Natan's category  $\mathcal{C}\mathit{ob}^3_{/\ell}$ from \cite{Bn2005}.
The objects of \mbox{$o\:\!\mathcal{C}\mathit{ob}^3_{/\ell}$} are closed $1$-manifolds embedded in the plane $\mathbb{R}^2$, and
the morphisms are given by formal $\ZZ$-linear combinations of chronological cobordisms. Such morphisms are considered
up to chronology-preserving isotopy, and up to the following relations:
\begin{enumerate}
\item Change-of-chronology relations.
\item Odd Bar-Natan relations.
\end{enumerate}

The odd Bar-Natan relations are shown below, 
where in the third relation, we are assuming that the critical points
are oriented in accordance with the default conventions mentioned after Definition~\ref{def:chroncob}:

{\noindent}\begin{minipage}{0.15\textwidth}
    \begin{equation*}
        \includeFig{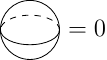}
    \end{equation*}
\end{minipage}%
\begin{minipage}{0.31\textwidth}
    \begin{equation*}
        \includeFig{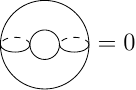}
    \end{equation*}
\end{minipage}%
\begin{minipage}{0.52\textwidth}
    \begin{equation*}
        \includeFig{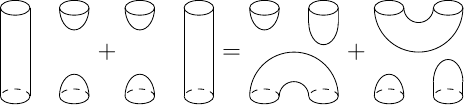}
    \end{equation*}
\end{minipage}

In particular, the second among these relations states that
a torus evaluates to $0$, contrasting with the even version of this relation from \cite{Bn2005}, which states that a torus evaluates to $2$.
Below are also some examples of change-of-chronology relations (for a complete list, see \cite{MW2024}):

{\noindent}\begin{minipage}{0.3\textwidth}
    \begin{equation*}
        \includeFig{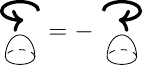}
    \end{equation*}
\end{minipage}%
\begin{minipage}{0.35\textwidth}
    \begin{equation*}
        \includeFig{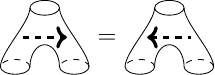}
    \end{equation*}
\end{minipage}%
\begin{minipage}{0.35\textwidth}
    \begin{equation*}
        \includeFig{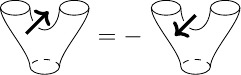}
    \end{equation*}
\end{minipage}

\[
        \includeFig{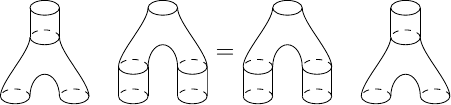}
\]
\[
        \includeFig{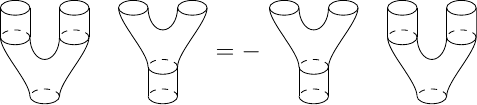}
\]
Note that the second change-of-chronology relation shown above implies that the orientation
at a merge saddle can be omitted.
One can check that all of the relations in \mbox{$o\:\!\mathcal{C}\mathit{ob}^3_{/\ell}$} are compatible with
the chronological grading. As a consequence, the morphism sets in the odd Putyra category carry both
a quantum grading and a supergrading. We can define a graded refinement of \mbox{$o\:\!\mathcal{C}\mathit{ob}^3_{/\ell}$} in which objects are
symbols of the form $C\{j\}\langle s\rangle$, where
\begin{itemize}
\item $C$ is an object of $o\:\!\mathcal{C}\mathit{ob}^3_{/\ell}$,
\item $\{j\}$ is a formal shift of the quantum grading by $j\in\mathbb{Z}$,
\item $\langle s\rangle$ is a formal shift of the supergrading by $s\in\mathbb{Z}_2$.
\end{itemize}
These formal grading shifts have the following significance: 
morphisms $S\colon C\{j\}\langle s\rangle\rightarrow C'\{j'\}\langle s'\rangle$ are no different
from morphisms $S\colon C\rightarrow C'$ in the original category \mbox{$o\:\!\mathcal{C}\mathit{ob}^3_{/\ell}$}, except that their quantum degrees are
raised by $j'-j$, and their superdegrees by $s'-s$.

\subsection{Dotted Chronological Cobordisms}\label{subs:dotchroncobs}

We will now describe the dotted odd Putyra category \mbox{ $o\:\!\mathcal{C}\mathit{ob}^3_{\bullet/\ell}$},
which is obtained from  \mbox{$o\:\!\mathcal{C}\mathit{ob}^3_{/\ell}$} by allowing chronological cobordisms to be decorated by at most finitely many distinct dots (cf. \cite[Sect.~11]{Pu2015}).
If $S$ is a chronological cobordism, then these dots
are viewed as marked points on the surface $S\setminus\partial S$.
The heights of these points are crucial, in that no dot is allowed to live at the same height as another dot, or at the
same height as a critical point.
A single dot is assumed to have chronological degree $(-1,-1)$,
and thus has quantum degree $-2$ and odd superdegree.
In particular, dots are assumed to satisfy the following relations, which imply that
dots commute vertically with cobordisms of even superdegree,
and anticommute with cobordisms of odd superdegree:

{\noindent}\begin{minipage}{0.5\textwidth}
    \begin{equation*}
    \includeFig{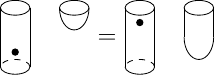}
    \end{equation*}
\end{minipage}%
\begin{minipage}{0.5\textwidth}
    \begin{equation*}
    \includeFig{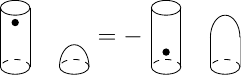}
    \end{equation*}
\end{minipage}

{\noindent}\begin{minipage}{0.5\textwidth}
    \begin{equation*}
        \includeFig{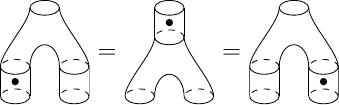}
    \end{equation*}
\end{minipage}%
\begin{minipage}{0.5\textwidth}
    \begin{equation*}
        \includeFig{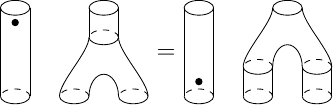}
    \end{equation*}
\end{minipage}

{\noindent}\begin{minipage}{0.5\textwidth}
    \begin{equation*}
        \includeFig{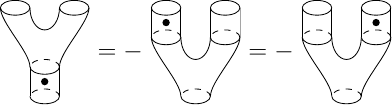}
    \end{equation*}
\end{minipage}%
\begin{minipage}{0.5\textwidth}
    \begin{equation*}
        \includeFig{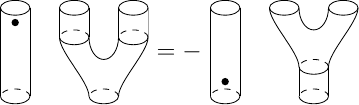}
    \end{equation*}
\end{minipage}

\begin{equation*}
	\includeFig{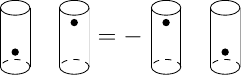}
\end{equation*}

Within a particular slice of time $\mathbb{R}^2\times\{t\}$, dots are allowed to slide freely around the component of $S$ they live on.
Dots are further assumed to satisfy following relations, where
deaths are oriented clockwise:

{\noindent}\begin{minipage}{0.333\textwidth}
    \begin{equation}\label{eqn:DottedSphere}
        \includeFig{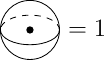}
    \end{equation}
\end{minipage}%
\begin{minipage}{0.333\textwidth}
    \begin{equation}\label{eqn:vneck}
        \includeFig{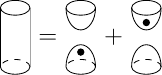}
    \end{equation}
\end{minipage}%
\begin{minipage}{0.333\textwidth}
    \begin{equation}
        \includeFig{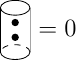}
    \end{equation}
\end{minipage}

Relation~\eqref{eqn:vneck} is known as the vertical neck-cutting relation.
By \cite[Lemma~26]{NW2024}, it also implies the following horizontal neck-cutting relation
    \begin{equation}\label{eqn:hneck}
        \includeFig{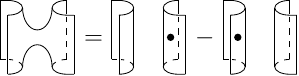}
    \end{equation}
where the upper saddle on the left-hand side is assumed to be oriented to the front
(and the orientation of the lower saddle does not matter, cf. \cite{NW2024}).
Since all relations that hold in \mbox{$o\:\!\mathcal{C}\mathit{ob}^3_{/\ell}$} are assumed
to hold in \mbox{$o\:\!\mathcal{C}\mathit{ob}^3_{\bullet/\ell}$} as well, there is an obvious functor
\[
o\:\!\mathcal{C}\mathit{ob}^3_{/\ell}\longrightarrow o\:\!\mathcal{C}\mathit{ob}^3_{\bullet/\ell}
\]
given by sending each undotted chronological cobordism to itself.
We note that this functor is not faithful because cobordisms of genus $g\geq 1$
are zero in \mbox{$o\:\!\mathcal{C}\mathit{ob}^3_{\bullet/\ell}$},
as a consequence of the above relations. 
In contrast, such cobordisms are nonzero in \mbox{$o\:\!\mathcal{C}\mathit{ob}^3_{/\ell}$}
unless they have closed components, although they become zero when multiplied by $2$.

\subsection{Definition of the Odd Khovanov Bracket}\label{subs:def}

The odd Khovanov bracket of a link diagram was introduced implicitly in \cite{Pu2015}
and has the form of an abstract chain complex.
In this subsection, we will briefly recall the definition of this chain complex.

Let $D$ be a link diagram with $n$ crossings.
Each crossing \includeSymb{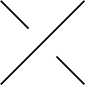} of $D$ has two possible resolutions, a $0$-resolution looking like \includeSymb{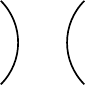},
and a $1$-resolution looking like \includeSymb{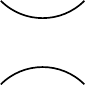} If we replace each crossing of $D$ by one
of the two resolutions, we obtain a crossingless link diagram, called a complete
 resolution of $D$.
After enumerating the crossings of $D$, we can index
the complete resolutions of $D$ by
sequences $\alpha=(\alpha_1,\ldots,\alpha_n)\in\{0,1\}^n$, where value of
$\alpha_i\in\{0,1\}$ specifies the resolution chosen at the $i$th crossing.
Let $D_\alpha$ denote the complete resolution of $D$ associated with the sequence $\alpha$.
We can arrange the $D_\alpha$ around the vertices of an $n$-dimensional hypercube,
called the resolution hypercube of $D$.
In this hypercube, two vertices $\alpha$ and $\alpha'$ are connected by an oriented edge $\xi\colon\alpha\rightarrow\alpha'$
if the sequences $\alpha$ and $\alpha'$ differ in a single entry, which is a $0$ in $\alpha$ and a $1$ in $\alpha'$.
For such $\alpha$ and $\alpha'$, the complete resolutions $D_\alpha=D_{\ldots 0\ldots}$ and $D_{\alpha'}=D_{\ldots 1\ldots}$ differ
at a single crossing of $D$. In particular, there is an obvious saddle cobordism
\[
d'_\xi=d'_{\ldots\star\ldots}\colon D_{\ldots 0\ldots}\longrightarrow D_{\ldots 1\ldots}
\]
between them, which is an identity cobordism at all places where $D_\alpha$ and $D_{\alpha'}$ coincide.
Now decorated each crossing of $D$ with an
arrow that connects the two arcs of its $0$-resolution.
These arrows induce orientations of the saddle points in the cobordisms $d'_\xi\subset\mathbb{R}^2\times I$,
and we can thus view the $d'_\xi$ as chronological cobordisms, or as morphisms in the category \mbox{$o\:\!\mathcal{C}\mathit{ob}^3_{/\ell}$}. In particular, this
allows us to view the resolution hypercube of $D$ as a diagram in \mbox{$o\:\!\mathcal{C}\mathit{ob}^3_{/\ell}$}.

Note that each $2$-dimensional face of the resolution hypercube
corresponds to a configuration of two distant saddles. Two examples
of such configurations are shown in Figure~\ref{fig:special}, where in each picture, the circle
represents the initial resolution, and the oriented arcs represent oriented saddles
(so that performing surgery along one of the arcs corresponds to performing
a saddle move). Because of the relations that hold in \mbox{$o\:\!\mathcal{C}\mathit{ob}^3_{/\ell}$},
all configurations of two distant saddles commute up to sign, but the two configurations
from Figure~\ref{fig:special} are special in that they simultaneously commute and anticommute. 
In fact, this is possible because in these two configurations the relevant compositions of saddles are annihilated
by $2$, and thus do not have well-defined signs.

To obtain a link invariant, one must artificially decide which of the two configurations from Figure~\ref{fig:special}
anticommutes. There are two different conventions for doing so, called type X and type Y. In type X, it is assumed that the left
configuration anticommutes and the right one commutes, and in type Y,
these assumptions are reversed.
\begin{figure}[H]
\includeFig{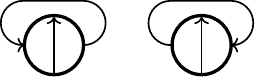}
\caption{The two special configurations, viewed as surgery diagrams in $S^2$.}\label{fig:special}
\end{figure}

Suppose we have fixed one of the two types.
We can then define a commutativity $2$-cochain
\[
\sigma\colon\{\mbox{faces}\}\longrightarrow\{\pm 1\}
\]
on the resolution hypercube of $D$ by sending each $2$-dimensional face of the hypercube
to $1$ or $-1$, depending on whether the face commutes or anticommutes.
It was shown in \cite{ORS2007} that the map
$\sigma$ provides a cellular $2$-cocycle \[\sigma\in Z^2(Q;\{\pm 1\}),\] on the hypercube $Q:=[0,1]^n$, equipped with
its usual cell structure, where $\{\pm 1\}$ is viewed as an abelian group with respect to multiplication.
Since $Q$ is contractible, this implies that there is a $1$-cochain
\[
\epsilon\in C^1(Q;\{\pm 1\})
\]
with coboundary
$\delta\epsilon=(-1)\sigma$.
Any such $\epsilon$ is called a sign assignment for $D$, where the sign assignment $\epsilon$ is said to have type X or type Y, depending
on the type that was used to define $\sigma$. After fixing a sign assignment,
we can make the faces of the resolution hypercube anticommute
by replacing each saddle cobordism $d'_\xi$
by the edge map
\[
d_\xi:=\epsilon_\xi d'_\xi
\]
where $\epsilon_\xi\in\{\pm 1\}$ is the sign that $\epsilon$ assigns
to the edge $\xi$. For a sequence $\alpha\in\{0,1\}^n$, let $\deg(\alpha):=\alpha_1+\ldots+\alpha_n$ and $\llbracket D\rrbracket_\alpha:=D_\alpha$.
Assume that the link diagram $D$ is oriented (in the usual sense), and let
$n_+$ and $n_-$ denote, respectively, the numbers of positive and negative crossings of $D$.
The $i$th chain group in the odd Khovanov bracket of $D$ is now defined as the direct sum
\[
\llbracket D\rrbracket^i:=\bigoplus_{i=\deg(\alpha)-n_-}\llbracket D\rrbracket_\alpha,
\]
where the sum is taken in the additive closure of $o\:\!\mathcal{C}\mathit{ob}^3_{/\ell}$.
The $i$th differential $d^i\colon\llbracket D\rrbracket^i\rightarrow\llbracket D\rrbracket^{i+1}$ is given by the sum
of all edge maps $d_\xi$ that correspond to edges $\xi\colon\alpha\rightarrow\alpha'$ which start at a vertex
$\alpha\in\{0,1\}^n$ with $\deg(\alpha)-n_-=i$. By construction, $\llbracket D\rrbracket:=\{\llbracket D\rrbracket^i,d^i\}_{i\in\mathbb{Z}}$
is a chain complex, and it was shown in \cite{ORS2007,Pu2015} that the homotopy type of this chain complex is a link invariant.

The odd Khovanov bracket $\llbracket D\rrbracket$ has a graded refinement, in which
all differentials are homogeneous of quantum degree and superdegree zero.
To obtain this graded refinement, one replaces the category \mbox{$o\:\!\mathcal{C}\mathit{ob}^3_{/\ell}$} by its graded version,
and one redefines
$\llbracket D\rrbracket_\alpha$ as
\begin{equation}\label{eqn:refinedDalpha}
\llbracket D\rrbracket_\alpha:=D_\alpha\{j(\alpha)\}\langle s(\alpha)\rangle
\end{equation}
where $\{j(\alpha)\}$ and $\langle s(\alpha)\rangle$ denote formal shifts 
of the quantum grading and the supergrading. Explicitly,
$j(\alpha):=\deg(\alpha)+n_+-2n_-=i+n_+-n_-$, and $s(\alpha)$ is defined as the modulo $2$ reduction of
\begin{equation}\label{eqn:Smap}
S(D,\alpha):=\frac{1}{2}\bigl(c(D,\alpha)+\deg(\alpha)+n_+-2n_--|D|\bigr),
\end{equation}
where $c(D,\alpha)$ denotes the number of components of $D_\alpha$, and $|D|$ denotes the number of components
of the link represented by $D$. One can check that $S(D,\alpha)$ is always an integer.
Moreover, $S(D,\alpha)$ increases by $1$ along edges $\xi\colon\alpha\rightarrow\alpha'$ that correspond to split saddles,
and it remains unchanged along edges that correspond to merge saddles \cite{MW2024}.

\subsection{Odd Khovanov homology}\label{subs:oddKh}

It was shown in \cite{ORS2007,Pu2015} that there is a chronological TQFT functor
\[\mathcal{F}_{odd}\colon o\:\!\mathcal{C}\mathit{ob}^3_{/\ell}\longrightarrow\ZZ\mbox{-mod},
\]
which takes the odd Khovanov bracket $\llbracket D\rrbracket$ to the odd Khovanov complex from \cite{ORS2007}. Thus,
the odd Khovanov homology of $D$ can be defined as the homology
\[
OKh(D):=H(\mathcal{F}_{odd}(\llbracket D\rrbracket)).
\]
The functor $\mathcal{F}_{odd}$ is defined as follows.
To a closed $1$-manifold $C\subset\mathbb{R}^2$ with connected components $a_1,\ldots,a_k$,
it assigns the exterior algebra
\[
\mathcal{F}_{odd}(C):=\Lambda^*[a_1,\ldots,a_k]=\Lambda^*V(C),
\]
where 
$V(C)$ denotes the free abelian group formally spanned by the $a_i$.
If $S\colon C\rightarrow C'$ is a merge saddle that merges two components $a_i$ and $a_j$ into a single component,
then $\mathcal{F}_{odd}(S)$ is the obvious quotient map
\[\mathcal{F}_{odd}(C)\longrightarrow\mathcal{F}_{odd}(C)/(a_i-a_j)\cong\mathcal{F}_{odd}(C').\]
On the other hand, if $S\colon C\rightarrow C'$ is a split saddle that divides a component of $C$
into two components $a'_i$ and $a'_j$, then $\mathcal{F}_{odd}(S)$ is the map
\[
\mathcal{F}_{odd}(C)\cong
\mathcal{F}_{odd}(C')/(a'_i-a'_j)\xrightarrow{(a'_i-a'_j)\wedge} \mathcal{F}_{odd}(C'),
\]
where we are assuming that $a'_i$ and $a'_j$ are located, respectively, to the left and to the right of the arrow that specifies
the orientation at the saddle point in $S$.\footnote{This is consistent with the convention used in \cite[Sect.~9]{Pu2015} but
inconsistent with the convention used in the preprint version of \cite{ORS2007}. 
The two conventions are largely interchangeable because one can go from one to the other
by reversing all arrows at the crossings of the link diagram $D$. This does not
change the commutativity behavior of the faces of the resolution hypercube,
and it also does not affect the case-by-case analysis in the proofs of Lemmas~\ref{lem:typeXslide} and \ref{lem:typeYslide} below.}
Finally, if $S$ is a birth cobordism $S\colon C\rightarrow C'$, then $\mathcal{F}_{odd}(S)$ is
the obvious inclusion $\mathcal{F}_{odd}(C)\rightarrow\mathcal{F}_{odd}(C')$, and if $S$ is
a death cobordism, then $\mathcal{F}_{odd}(S)$ is the contraction from the left with the component that
gets annihilated. The functor $\mathcal{F}_{odd}$ factors through a functor
\[
o\:\!\mathcal{C}\mathit{ob}^3_{\bullet/\ell}\longrightarrow\ZZ\mbox{-mod}.
\]
To an identity cobordism $C\times I$ with a dot placed on a component $a_i\times I$, the
latter functor assigns the map given by wedge multiplication from the left by $a_i$.

\subsection{Reduced Odd Khovanov Homology}\label{subs:reducedfirst}

If $C$ is nonempty, then the subalgebra of $\mathcal{F}_{odd}(C)=\Lambda^*[a_1,\ldots,a_k]$ generated by all differences $a_i-a_j$
is a proper submodule $\overline{\mathcal{F}}_{odd}(C)\subset\mathcal{F}_{odd}(C)$.
One can check that the assignment $C\mapsto\overline{\mathcal{F}}_{odd}(C)$
extends to a functor, where the map $\overline{\mathcal{F}}_{odd}(S)$
is defined as the restriction of $\mathcal{F}_{odd}(S)$. Using this functor, we can define
the reduced odd Khovanov homology of a nonempty link diagram $D$ by
\[
\overline{OKh}(D):=H(\overline{\mathcal{F}}_{odd}(\llbracket D\rrbracket)).
\]
Note that $\overline{\mathcal{F}}_{odd}$ does
not extend to dotted chronological cobordisms
because wedge multiplication by a component $a_i$ does
not preserve the reduced submodule $\overline{\mathcal{F}}_{odd}(C)$.
We will revisit this issue in Subsection~\ref{subs:reduced}.

\subsection{Gradings}\label{subs:gradings}

If $C$ has $k$ components $a_1,\ldots,a_k$, then
we can equip the module $\mathcal{F}_{odd}(C)$ with a quantum grading
and a supergrading by declaring that 
 an element $a_{i_1}\wedge\cdots\wedge a_{i_\ell}\in \mathcal{F}_{odd}(C)$
has quantum degree $k-2\ell$ and superdegree $\ell$ modulo $2$.
The functor $\mathcal{F}_{odd}$ can be extended to the graded version of \mbox{$o\:\!\mathcal{C}\mathit{ob}^3_{/\ell}$}
by sending an object $C\{j\}\langle s\rangle$ to the module $\mathcal{F}_{odd}(C)\{j\}\langle s\rangle$,
which is obtained from $\mathcal{F}_{odd}(C)$ by raising the quantum grading by $j\in\ZZ$ and the supergrading by $s\in\ZZ_2$.
The extended functor then gives rise to a graded version of the odd Khovanov complex.
One can check that,
in the graded odd Khovanov complex, the superdegree of a homogeneous generator $g$ coincides
with the modulo $2$ reduction
of $(j-|D|)/2$, where $j$ denotes the quantum degree of $g$, and $|D|$ denotes
the number of link components.

In the reduced subcomplex
$\overline{\mathcal{F}}_{odd}(\llbracket D\rrbracket)\subset\mathcal{F}_{odd}(\llbracket D\rrbracket)$,
we will assume that the quantum grading has been lowered by $1$, so that the reduced odd Khovanov homology
of an unknot is supported in quantum degree zero.

\subsection{Coloring Modules of Link Diagrams}\label{subs:ColD}

Throughout this subsection, $D$ will be a link diagram representing a link $L\subset\mathbb{R}^2$. By an overstrand of $D$, we will mean
a maximal subarc of $L$ that does not undercross any strands in $D$.
A trivial circle in $D$ that has no crossings is also considered to be an overstrand.

The following definition is based on \cite{KH2025}:

\begin{definition}\label{def:ColD}
The coloring module\footnote{The coloring module is a universal coloring group, in the sense that
Fox's $n$-colorings correspond to homomorphisms from $\col(D)$ to $\ZZ/n\ZZ$.}
$\col(D)$ is an abelian group which is generated
by the overstrands of $D$, and which has a relation of the form
$2b-a-c=0$ for each crossing of $D$, where $a,b,c$ denote the overstrands that meet
at that crossing. More precisely,
\[
\col(D):=\left\{\mbox{overstrands of $D$}\,\left|\,\includeFig{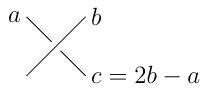}\right.\right\}
\]
\end{definition}
The coloring module admits an obvious group homomorphism
\[
\phi\colon\col(D)\longrightarrow\ZZ
\]
given by sending each overstrand to $1$. The kernel of this homomorphism
is called the reduced coloring module, and denoted $\col(D)^{red}$.
Note that the reduced coloring module is generated by differences of overstrands.

The following is well known (see, e.g., \cite{Pr1998}):

\begin{lemma}\label{lem:ColDH}
$\col(D)^{red}$ is canonically isomorphic to $H_1\left(\Sigma(L);\ZZ\right)$.
\end{lemma}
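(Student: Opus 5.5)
The plan is to realize $\Sigma(L)$ concretely from the diagram via a Wirtinger-type decomposition and compute $H_1$ directly. I would use the standard fact that $H_1(\Sigma(L);\ZZ)$ equals the first homology of the kernel of the composite $\pi_1(S^3\setminus L)\to\ZZ_2$ (sending every meridian to $1$) after killing the squares of meridians. Equivalently, it is the kernel of the abelianized homomorphism from the group
\[
G=\pi_1(S^3\setminus L)/\langle\!\langle \mu^2\rangle\!\rangle
\]
to $\ZZ_2$, taken after passing to the index-two subgroup.

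Concretely, I would start from the Wirtinger presentation. It has one generator $x_s$ per overstrand $s$ of $D$, and one relation per crossing of the form $x_c=x_b x_a x_b^{-1}$, where $b$ is the overstrand passing over. After adding the relations $x_s^2=1$, this becomes $x_c=x_bx_ax_b$.

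The index-two subgroup $K\subset G$ (the kernel of $x_s\mapsto 1\in\ZZ_2$) is $\pi_1$ of $\Sigma(L)$ with the branch locus filled in. More precisely, $\pi_1(\Sigma(L))\cong K$: filling in the branch set kills exactly the lifts of $\mu^2$. I would then apply Reidemeister--Schreier with Schreier transversal $\{1,x_{s_0}\}$ for a fixed overstrand $s_0$. This gives generators $y_s=x_sx_{s_0}$ (the other family, $x_{s_0}x_s$, is identified via $x^2=1$ with the inverses $y_s^{-1}$, up to conjugation).

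Abelianizing, I would set $u_s:=[x_sx_{s_0}]\in H_1(\Sigma(L))$. The crossing relation $x_c=x_bx_ax_b$, rewritten in the $y$'s and abelianized, becomes $u_c=2u_b-u_a$, so it matches the relation $2b-a-c=0$. The conjugated copy of each relation gives the same abelian relation, because conjugation by $x_{s_0}$ acts as $-1$ on the $u$'s (this is the deck involution). The final step is to define the map
\[
\col(D)^{red}\to H_1(\Sigma(L)),\qquad s-s_0\mapsto u_s,
\]
check that it is well defined and bijective, and check that it does not depend on the choice of $s_0$. Independence holds because $u_s-u_{s'}$ corresponds to $x_sx_{s'}$ under the relation $x_sx_{s_0}(x_{s'}x_{s_0})^{-1}=x_sx_{s'}$. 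This independence is what gives canonicity.

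The main obstacle will be the bookkeeping in Reidemeister--Schreier: showing that the abelianized presentation of $K$ is exactly generated by the $u_s$ with precisely the coloring relations and nothing extra. The potential extra relations come from the squares $x_s^2$ lifted to both cosets; I expect them to yield only $u_{s_0}=0$ together with consistency relations. I would also verify that the isomorphism is natural under Reidemeister moves, which is a routine check since $\col(D)$ is known to be invariant. Alternatively, I could cite \cite{Pr1998} for this step and only check that our sign and generator conventions agree with that reference.
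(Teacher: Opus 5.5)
Your argument is correct, but it goes through the fundamental group, whereas the paper works directly with cellular chains. The paper turns the Wirtinger presentation into a handle decomposition of the link complement and lifts it to the unbranched double cover. This gives two $0$-handles $\widetilde x_0,t\widetilde x_0$ and $1$-handles $\widetilde a,t\widetilde a$. The paper then adds $2$-handles along $\widetilde a+t\widetilde a$ and reads off $C_1/\operatorname{im}\partial_2=\col(D)$ after substituting $t\widetilde a=-\widetilde a$. Since $\partial_1\sum_a m_a\widetilde a=(\sum_a m_a)(t\widetilde x_0-\widetilde x_0)$, the cycles are exactly the coefficient-sum-zero classes, i.e.\ $\col(D)^{red}$.

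Your Reidemeister--Schreier computation with transversal $\{1,x_{s_0}\}$ is the $\pi_1$-level version of the same calculation; in effect it uses the $1$-cell $\widetilde{s}_0$ as a maximal tree joining the two $0$-cells. The bookkeeping closes up as you predict:
\begin{itemize}
\item lifting $x_s^2$ gives exactly $x_{s_0}x_s=y_s^{-1}$, with no conjugation needed;
\item both lifts of a crossing relator abelianize to $\pm(2u_b-u_a-u_c)$;
\item $\col(D)^{red}$ is presented by the elements $s-s_0$ with the same relations, because every crossing relation has coefficient sum zero.
\end{itemize}

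Your route yields more, namely the nonabelian statement $\pi_1(\Sigma(L))\cong K$ with an explicit presentation. For this you should note that the normal closure of the $\mu^2$ in $\pi_1(S^3\setminus L)$ agrees with the normal closure of their lifts in the index-two subgroup. This holds since $\mu$ commutes with $\mu^2$.

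The paper's route avoids $\pi_1$ and the auxiliary choice of $s_0$. It also identifies the unreduced $\col(D)$ with $C_1/\operatorname{im}\partial_2=H_1(\Sigma(L),\Sigma(L)^0;\ZZ)$ (Remark~\ref{rem:relative}). Finally, it provides the chain-level dictionary ``overstrand $a\mapsto$ $1$-handle $\widetilde a$'', which is reused in the proof of Theorem~\ref{thm:arcaction}.

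One caveat about canonicity: independence of $s_0$ is not the whole story. Identifying $\pi_1(\Sigma(L))$ with $K$ requires a choice of lift $\widetilde x_0$ of the basepoint. Switching to $t\widetilde x_0$ composes your isomorphism with the deck transformation, which acts as $-1$ on $H_1$. The paper makes the isomorphism canonical precisely by fixing this lift.
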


The isomorphism in this lemma is canonical under the assumption that the branched double-cover, $\Sigma(L)$, is equipped with
a distinguished preimage of the basepoint $x_0:=\infty\in S^3$.
We will briefly prove the lemma because the proof will matter later in the paper.

\begin{proof} Let
\[E=S^3\setminus\operatorname{nbhd}(L)\]
be the link complement and $E_2$ be 
its usual double-cover, so that a loop $\gamma\subset E$ lifts to a loop in $E_2$
iff $\gamma$ has even linking number with $L$.
Starting with the Wirtinger presentation for $\pi_1(E,x_0)$, we can 
can construct a handle decomposition for $E$ with
\begin{itemize}
\item
a $0$-handle corresponding to the basepoint $x_0$,
\item
a $1$-handle for each Wirtinger generator,
\item
a $2$-handle for each Wirtinger relation,
\item
a single $3$-handle.
\end{itemize}
By taking all preimages of handles in this handle decomposition, we also
obtain a handle decomposition for $E_2$.
Note that the $0$-handles in this handle decomposition for $E_2$ correspond to the points
$\widetilde{x}_0$ and $t\widetilde{x}_0$, where $\widetilde{x}_0$ is the distinguished preimage of $x_0$,
and $t$ denotes the covering transformation.
Given a Wirtinger generator $a\in\pi_1(E,x_0)$, we will denote by $\widetilde{a}\subset E_2$ its lift starting at $\widetilde{x}_0$.
For simplicity, we will also write $\widetilde{a}$ 
for the corresponding $1$-handle in $E_2$.
Note that $\widetilde{a}$ connects $\widetilde{x}_0$ to $t\widetilde{x}_0$,
while $t\widetilde{a}$ connects $t\widetilde{x}_0$ to $\widetilde{x}_0$
(see Figure~\ref{fig:lifts}).

\begin{figure}[H]
\includeFig{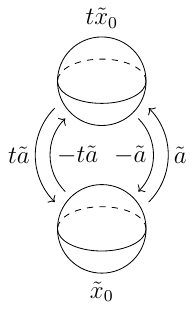}
\caption{The lifts of a Wirtinger generator $a$
and its inverse $a^{-1}$.
}\label{fig:lifts}
\end{figure}

We can extend the handle decomposition of $E_2$ to a handle decomposition for $\Sigma(L)\supset E_2$
by attaching an additional $2$-handle along each loop $\widetilde{a}+t\widetilde{a}$,
and then filling in additional $3$-handles.
The cellular chain group $C_1(\Sigma(L))$ 
associated with the resulting handle decomposition of $\Sigma(L)$ is generated
by all $1$-handles $\widetilde{a}$ and $t\widetilde{a}$, for all Wirtinger generators $a$.
Moreover, each lift of a $2$-handle coming from a crossing of $D$
gives rise to a relation in the quotient group $C_1(\Sigma(L))/\operatorname{im}(\partial_2)$.
Explicitly, this relation is shown in \eqref{eqn:doublecoverrelation},
where the $1$-chain $\widetilde{b}+t\widetilde{a}-t\widetilde{b}=t\widetilde{b}-\widetilde{a}-\widetilde{b}$
arises as a lift
of either $b*a*b^{-1}$ or $b^{-1}*a*b$, depending on whether the crossing
is positive or negative.

\begin{equation}\label{eqn:doublecoverrelation}
\includeFig{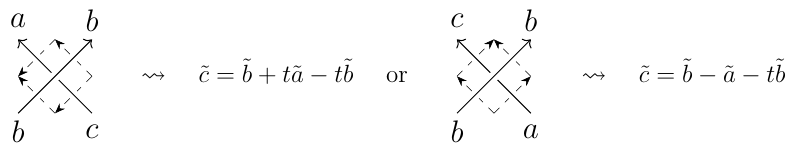}
\end{equation}
The additional $2$-handles in $\Sigma(L)$
provide
the relations
$\widetilde{a}+t\widetilde{a}=0$, and we can therefore
replace each generator $t\widetilde{a}$ by $-\widetilde{a}$. This turns
the relations from \eqref{eqn:doublecoverrelation} into the relation
\[
\widetilde{c}=2\widetilde{b}-\widetilde{a},
\]
which corresponds to the relation that holds in $\col(D)$.
In conclusion, we see that there is an identification
\begin{equation}\label{eqn:ColDH}
\col(D)=C_1(\Sigma(L))/\operatorname{im}(\partial_2),
\end{equation}
given by sending each overstrand in $D$ to the lift $\widetilde{a}$ of the corresponding
Wirtinger generator $a$. To complete the proof, we note that
the cellular boundary of a linear combination
$\sum_am_a\widetilde{a}$ is equal to
\[\partial_1\sum_am_a\widetilde{a}=m(t\widetilde{x}_0-\widetilde{x}_0),\] where
$m:=\sum_am_a$ is the coefficient sum.
Hence the homology group $\operatorname{ker}(\partial_1)/\operatorname{im}(\partial_2)$
is precisely the submodule of $C_1(\Sigma(L))/\operatorname{im}(\partial_2)$
generated by all linear combinations of generators $\widetilde{a}$ for which the coefficient sum is equal to zero.
Under the identification \eqref{eqn:ColDH}, this submodule corresponds to $\col(D)^{red}$,
and so the lemma follows.
\end{proof}

\begin{remark} The above proof also shows
the covering transformation acts by multiplication by $-1$ on $H_1(\Sigma(L);\ZZ)$.
\end{remark}

\begin{remark} If $a$ is a Wirtinger generator, then the lifts of $a$ and $a^{-1}$ that
start at $\widetilde{x}_0$ represent the same element of $C_1(\Sigma(L))/\operatorname{im}(\partial_2)$
(but if $a$ and $b$ are two Wirtinger generators, then the lifts of $a* b$ and $(a* b)^{-1}$
that start at $\widetilde{x}_0$ are negatives of each other in $H_1(\Sigma(L);\ZZ)$).
\end{remark}

\begin{remark} 
Changing the orientation of a link component has the effect of replacing each Wirtinger
generator $a$ associated with
this component by its inverse $a^{-1}$.
The previous remark therefore shows that the isomorphism from Lemma~\ref{lem:ColDH}
does not depend on the orientation of $L$.
\end{remark}

Let $L\cup U$ denote the disjoint union of $L$ with an unknot $U$.

\begin{corollary}\label{cor:ColDH} $\col(D)$ is canonically isomorphic to $H_1(\Sigma(L\cup U);\mathbb{Z})$.
\end{corollary}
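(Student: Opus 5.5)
We will deduce Corollary~\ref{cor:ColDH} from Lemma~\ref{lem:ColDH}, applied to a diagram of $L\cup U$. Let $D\sqcup O$ be the diagram obtained from $D$ by adding a small crossingless circle $O$ far away from $D$; it represents $L\cup U$. By Definition~\ref{def:ColD}, $O$ is a single overstrand and no crossing relations involve it. Hence
\[
\col(D\sqcup O)=\col(D)\oplus\ZZ\,O .
\]
Lemma~\ref{lem:ColDH} then identifies $H_1(\Sigma(L\cup U);\ZZ)$ canonically with $\col(D\sqcup O)^{red}$, the kernel of the coefficient-sum map $\phi$.

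Next we compare $\col(D\sqcup O)^{red}$ with $\col(D)$. Consider the homomorphism
\[
\psi\colon\col(D)\longrightarrow\col(D\sqcup O)^{red},\qquad x\longmapsto x-\phi(x)\,O .
\]
It is well defined because every crossing relation $2b-a-c$ has coefficient sum zero. So $\psi$ sends relations to relations, and its image indeed lies in $\ker\phi$. The inverse is the restriction to $\ker\phi$ of the projection $\col(D)\oplus\ZZ O\to\col(D)$ that kills $O$. Indeed, an element $x+mO$ with $\phi(x)+m=0$ is exactly $\psi(x)$. Hence $\psi$ is an isomorphism. Composing with Lemma~\ref{lem:ColDH} gives
\[
\col(D)\cong H_1(\Sigma(L\cup U);\ZZ).
\]
Concretely, an overstrand $a$ is sent to the class of $\widetilde a-\widetilde o$, where $o$ is the Wirtinger generator of $U$; this is the lift of the loop $a*o^{-1}$.

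The step needing care is canonicity. The identification must not depend on where $O$ is placed or on auxiliary choices. Any two placements of the far-away circle $O$ (in the complement of $D$, with $x_0=\infty$ fixed) are related by an isotopy of the diagram. This isotopy moves $O$ across regions without creating crossings, or, if it does create some, by Reidemeister moves of $O$ passing under $D$. Under the identification \eqref{eqn:ColDH} the lift $\widetilde o$ is unchanged in homology, since all lifts start at the fixed basepoint $\widetilde x_0$. The isomorphism of Lemma~\ref{lem:ColDH} also does not depend on orientations, by the remarks above. Moreover, the splitting used to define $\psi$ is intrinsic: $O$ is the unique overstrand of the unknot component. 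So $\psi$ involves no choices, and the composite isomorphism is canonical. I expect this bookkeeping to be the only mildly delicate point; the algebraic isomorphism itself is immediate.
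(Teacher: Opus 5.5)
Your core argument is the paper's proof. The paper deduces the corollary from Lemma~\ref{lem:ColDH} via the identification $\col(D)=\col(D\cup U)^{red}$, $a\mapsto a-u$, which is exactly your $\psi$. Your verification that $\psi$ is a well-defined isomorphism is fine.

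However, your canonicity paragraph contains a false claim: moving $O$ under a strand of $D$ does \emph{not} leave $\widetilde o$ unchanged. Suppose $O$ is pushed under a strand $g$ of $D$ by two Reidemeister~II moves. The crossing relations then identify the new overstrand $o'$ of $O$ with $2g-o$; equivalently, the transported Wirtinger meridian of $O$ is conjugated by $g^{\pm 1}$. Consequently your $\psi$ for the old position is carried to the map sending each overstrand $a$ to $a-2g+o'$, rather than $a-o'$. The difference $2(o'-g)$ is nonzero in $\col(D)\oplus\mathbb{Z}o'$. Only crossingless planar moves, or moves of $O$ \emph{over} $D$, preserve the identification.

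This does not damage the proof. The canonical isomorphism is meant with $O$ fixed in the unbounded region of $\mathbb{R}^2\setminus D$; the paper later places $U$ there, and outside a fixed ball $B$ containing $L$ in the cobordism setting. Any two such placements differ by a planar isotopy avoiding $D$. So simply delete the clause about Reidemeister moves of $O$ passing under $D$.
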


\begin{proof} This follows rom Lemma~\ref{lem:ColDH} because
there is an identification
\[
\col(D)=\col(D\cup U)^{red}
\]
given by sending a generator $a$ to $a-u$, where $u$ corresponds to the
component $U$.
\end{proof}

\begin{remark}\label{rem:doublecoverLU} If $D$ is nonempty, then there is a short exact sequence
\[
0\longrightarrow\col(D)^{red}\longrightarrow\col(D)\stackrel{\phi}{\longrightarrow}\ZZ\longrightarrow 0,
\]
which gives rise to a non-canonical isomorphism
$\col(D)\cong\col(D)^{red}\oplus\ZZ$.
Together with Lemma~\ref{lem:ColDH} and Corollary~\ref{cor:ColDH},
this implies
\[
H_1(\Sigma(L\cup U);\ZZ)\cong H_1(\Sigma(L);\ZZ)\oplus\ZZ.
\]
The latter can also be seen geometrically by observing
that $\Sigma(L\cup U)\cong\Sigma(L)\# (S^1\times S^2)$
whenever $L$ is nonempty.
In fact, the homomorphism
$\phi\colon\col(D)\rightarrow\ZZ$ corresponds to the map 
$H_1(\Sigma(L\cup U);\ZZ)\rightarrow H_1(S^1\times S^2;\ZZ)\cong\ZZ$
induced by collapsing the first summand in $\Sigma(L)\# (S^1\times S^2)$.
\end{remark}

\begin{remark}\label{rem:relative} The proof of Lemma~\ref{lem:ColDH} also gives rise to a canonical identification
of $\col(D)$ with the relative homology group $H_1(\Sigma(L),\Sigma(L)^0;\ZZ)$ where $\Sigma(L)^0:=\{\widetilde{x}_0,t\widetilde{x}_0\}$.
Under this identification, the short exact sequence from the previous remark corresponds to the exact sequence
\[
0\longrightarrow H_1(\Sigma(L);\ZZ)\longrightarrow H_1(\Sigma(L),\Sigma(L)^0;\ZZ)\longrightarrow \widetilde{H}_0(\Sigma(L)^0;\ZZ)\longrightarrow 0
\]
which arises as a portion of the 
long exact sequence for the pair $(\Sigma(L),\Sigma(L)^0)$.
\end{remark}

\subsection{Modified Coloring Modules}

We can modify the definition of the coloring module by using understrands instead of overstrands:

\begin{definition}\label{def:mColD}
The modified coloring module $\mcol(D)$
is the abelian group
\[
\mcol(D):=\left\{\mbox{understrands of $D$}\,\left|\,\includeFig{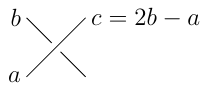}\right.\right\}.
\]
\end{definition}
Let $m(D)$ denote the link diagram obtained from $D$ by switching all crossings, so that understrands become overstrands.
Then there is an obvious identification
\[
\mcol(D)=\col(m(D)).
\]
If $r(D)$ denotes the reflection of $m(D)$ in the plane $\mathbb{R}^2$,
we further have an identification $\col(m(D))=\col(r(D))$.
Note that $m(D)$ represents the mirror image of $L$, and $r(D)$ represents
the link $r(L)$ obtained by rotating $L\subset\mathbb{R}^3$ by $180^\circ$ about an axis parallel to the plane of the picture.
Since $r(L)$ is isotopic to $L$, we have
\[
\mcol(D)=\col(r(D))\cong\col(D)=H_1(\Sigma(L\cup U);\ZZ).
\]
In the remainder of the paper, we would like to have an explicit identification
\begin{equation}\label{eqn:mColDH}
\mcol(D)=H_1(\Sigma(L\cup U);\ZZ)
\end{equation}
which does not involve rotating the link $L$.
Such an identification can be obtained by using a modified Wirtinger presentation
for $\pi_1(E,x_0)$ in which the generators correspond to understrands.
This modified presentation gives rise to a new handle decomposition of $\Sigma(L)$,
which has the same $0$-handles as the original one (and $\widetilde{x}_0$ and $t\widetilde{x}_0$
play the same roles as before), but different higher-dimensional handles.
The desired identification \eqref{eqn:mColDH}
is obtained by repeating the arguments from
Lemma~\ref{lem:ColDH} and Corollary~\ref{cor:ColDH} for this new handle decomposition.
Using these arguments, one also gets an identification
\begin{equation}\label{eqn:mColDHred}
\mcol(D)^{red}=H_1(\Sigma(L);\ZZ),
\end{equation}
where $\mcol(D)^{red}\subset\mcol(D)$ is the subgroupn generated by differences of understrands.

\section{Module Structure on Odd Khovanov Homology}\label{s:module}

In this section, we will introduce dot maps and use them
to prove Theorems~\ref{thm:module} through \ref{thm:arcaction} from
the introduction.
Throughout this section, we will view
the odd Khovanov bracket as a complex $\llbracket D\rrbracket$ in additive closure of the dotted odd Putyra category
\mbox{$o\:\!\mathcal{C}\mathit{ob}^3_{\bullet/\ell}$}.

\subsection{Dot Chain Maps}\label{subs:DotChainMaps}

Let $D$ be a link diagram and $p$ be a marked point on $e\setminus\partial e$ for an edge $e$
of $D$.
To this data, we can assign a cubical chain map
\[
x_e\colon\llbracket D\rrbracket\longrightarrow\llbracket D\rrbracket
\]
with components $(x_e)_\alpha\colon \llbracket D\rrbracket_\alpha\rightarrow\llbracket D\rrbracket_\alpha$
defined as follows. Let $S(D,\alpha)$ denote the quantity from \eqref{eqn:Smap}, and assume the point $p$ lies on a connected component $a_i$ of the 
resolution $D_\alpha$. Then
\begin{equation}\label{eqn:xedef}
\includeFig{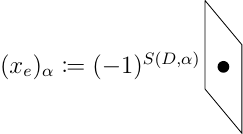}
\end{equation}
where the dotted vertical
sheet represents the identity cobordism $D_\alpha\times I$, decorated by a single dot placed on the component $a_i\times I$.
Since dots commute with merge saddles and anticommute with split saddles, the sign in \eqref{eqn:xedef} ensures
that $x_e$ is a chain map (see the discussion after \eqref{eqn:Smap}).
We will call $x_e$ the dot chain map assigned to $e$, and depict it as a dot placed on $e$, where the dot stands for the point $p$.

By applying the functor $\mathcal{F}_{odd}$ to $x_e$, we obtain a dot chain map
$\mathcal{F}_{odd}(x_e)\colon\mathcal{F}_{odd}(\llbracket D\rrbracket)\rightarrow\mathcal{F}_{odd}(\llbracket D\rrbracket)$
on the odd Khovanov complex. Explicitly, the latter map takes an element $a\in\mathcal{F}_{odd}(D_\alpha)$ to the wedge product
$(-1)^{S(D,\alpha)}a_i\wedge a$ for $a_i$ as above. We will henceforth abuse notation
and write $x_e$ not only for the original dot chain map from \eqref{eqn:xedef}, but also for $\mathcal{F}_{odd}(x_e)$
and for the induced map on odd Khovanov homology.

\begin{remark}\label{rem:ye} Instead of
taking the wedge product from the left with $(-1)^{S(D,\alpha)}a_i$, we could define a different
dot chain map by taking the wedge product from the right with $a_i$. This idea will be discussed
in more detail in Subsection~\ref{subs:modifieddots}.
\end{remark}

\subsection{Reduced Odd Khovanov Homology and Dot Maps}\label{subs:reduced}

As in the previous subsection, let
$p$ be a marked point on an edge $e$ of a link diagram $D$. Then the module
$\mathcal{F}_{odd}(D_\alpha)$ decomposes into two isomorphic summands
\[
\mathcal{F}_{odd}(D_\alpha)=\overline{\mathcal{F}}_{odd}(D_\alpha)\oplus\mathcal{F}^{(p)}_{odd}(D_\alpha),
\]
where the first summand is the reduced submodule from Subsection~\ref{subs:reducedfirst},
and the second summand is the submodule $a_i\wedge\mathcal{F}_{odd}(D_\alpha)=a_i\wedge\overline{\mathcal{F}}_{odd}(D_\alpha)$, for $a_i$ as above. The above decomposition extends to a decomposition of chain complexes,
which in turn induces a decomposition of odd Khovanov homology
\begin{equation}\label{eqn:OKhdecomp}
OKh(D)=\overline{OKh}(D)\oplus OKh(D)^{(p)}.
\end{equation}
The first summand in \eqref{eqn:OKhdecomp} is the reduced odd Khovanov homology defined in Subsection~\ref{subs:reducedfirst},
and the second summand is equal to
\[
OKh(D)^{(p)}=\operatorname{im}(x_e)=\operatorname{ker}(x_e).
\]
The two summands \eqref{eqn:OKhdecomp}  in are isomorphic, where the isomorphism is 
induced by the dot map $x_e\colon OKh(D)\rightarrow OKh(D)$.

There is another connection between reduced and nonreduced odd Khovanov homology,
which involves the disjoint union of $D$ with an unknot $U$.
Let $p$ be a marked point on this unknot, and $x_u$ be the corresponding dot map on $OKh(D\cup U)$.
Then there is an isomorphism
\[
OKh(D)\cong
OKh(D\cup U)^{(p)}
\]
given by $x_u\circ\iota$, where $\iota$ denotes the obvious inclusion
$OKh(D)\rightarrow OKh(D\cup U)$ given by a birth map on each $\mathcal{F}_{odd}(D_\alpha)$. 
Since $OKh(D\cup U)^{(p)}$ is isomorphic to $\overline{OKh}(D\cup U)$,
we thus obtain an isomorphism
\begin{equation}\label{eqn:OKhDU}
OKh(D)\cong
\overline{OKh}(D\cup U).
\end{equation}
One can check that this isomorphism is induced by the algebra isomorphisms
\[
\Lambda^*[a_1,\ldots,a_k]\stackrel{\cong}{\longrightarrow}
\Lambda^*[a_1-u,\ldots,a_k-u]
\]
which are given by sending each $a_j\subset D_\alpha$
to the difference $a_j-u$, where $u$ represents the component $U\subset D_\alpha\cup U$.

\begin{remark}
In \eqref{eqn:OKhdecomp}, we have ignored the gradings. If we define the quantum grading on $\overline{OKh}(D)$
as discussed in Subsection~\ref{subs:gradings}, then \eqref{eqn:OKhdecomp} implies:
\[
OKh(D)\cong\overline{OKh}(D)\{1\}\langle 0\rangle\oplus \overline{OKh}(D)\{-1\}\langle 1\rangle. 
\]
\end{remark}

\begin{remark} 
From the perspective of this paper, it is natural to allow $p$ to be an arbitrary
$\ZZ$-linear combination of marked points on $D$. If the coefficient sum in this $\ZZ$-linear combination is
equal to $1$, then all of the results from this subsection remain true in this generalized
setting, provided one replaces the component $a_i$ by the corresponding linear combination of components,
and the dot map $x_e$ by the corresponding linear combination of dot maps.
\end{remark}

\begin{remark} One can define an antiderivation $\partial_\alpha$ on the exterior algebra $\mathcal{F}_{odd}(D_\alpha)$
by setting $\partial_\alpha 1:=0$ and $\partial_\alpha(a_j\wedge a):=a-(a_j\wedge\partial_\alpha a)$ for all generators $a_j$.
This antiderivation was first used in \cite{ES2025} to define a $\mathfrak{gl}_{1|1}$-action on odd Khovanov homology, and over $\mathbb{F}_2$, related maps were previously studied in \cite{We2010,Shu2014}.
The maps $\partial_\alpha$ give rise to a chain map with components $(-1)^{S(D,\alpha)}\partial_\alpha$.
In particular, there is an induced map $\partial\colon OKh(D)\rightarrow OKh(D)$, and the latter map provides an inverse for the map
$\overline{OKh}(D)\rightarrow OKh(D)^{(p)}$ given by $x_e$. Moreover, $\partial$ satisfies
$\partial\circ x_e+x_e\circ\partial=\operatorname{id}$,
where the two terms on the left-hand side are precisely the projections onto the two summands in \eqref{eqn:OKhdecomp}.
\end{remark}

\subsection{Definition of the Module Structure}\label{subs:moduledef}

In \cite{Ma2014}, Manion studied a variant of our dot chain maps,
although he viewed his maps as components of a twisted odd differential.
In the type X setting, he proved that the isomorphism type of
his twisted odd Khovanov
complex does not change if a marked point passes under a crossing \cite[Theorem 3.1]{Ma2014}. 
An adaption of his proof shows:

\begin{lemma}\label{lem:typeXslide} On the type X odd Khovanov bracket,
\[
\includeFig{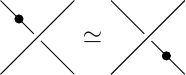}
\]
where the dots represent dot chain maps, defined as in \eqref{eqn:xedef}.
\end{lemma}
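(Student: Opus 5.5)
The plan is to prove the relation pictured in Lemma~\ref{lem:typeXslide} as an equality up to chain homotopy, i.e.\ to show that the appropriate signed combination of dot chain maps on the edges meeting at a crossing $c$ is null-homotopic on the type X odd Khovanov bracket. I am reading the figure as a relation in which the two understrand edges and the overstrand edge at $c$ enter with coefficients mirroring the coloring relation of Definition~\ref{def:ColD}, up to the sign conventions of \eqref{eqn:xedef}. Following Manion's argument, I will write down an explicit homotopy
\[
H\colon\llbracket D\rrbracket\longrightarrow\llbracket D\rrbracket[-1].
\]
Its only nonzero components go backwards along the edges $\xi\colon\alpha\to\alpha'$ of the resolution hypercube that change the resolution at $c$ from $0$ to $1$. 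On such an edge, $H_{\alpha'\to\alpha}$ will be the reverse saddle cobordism $D_{\alpha'}\to D_\alpha$, which is an identity cobordism away from $c$. I will orient it by the arrow at $c$ (rotated by $90^\circ$) and multiply it by the sign $\epsilon_\xi$ of the sign assignment together with a factor $(-1)^{S(D,\alpha)}$ or $(-1)^{S(D,\alpha')}$, chosen so that the signs match those in \eqref{eqn:xedef}.

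The proof then splits into two local computations of $dH+Hd$.

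\emph{Diagonal terms.} Here $H$ and $d$ act along the same crossing $c$, and the composites are saddle followed by reverse saddle at $c$ (in either order). On a resolution where the saddle at $c$ is a merge, the composite is a neck. On one where it is a split, the composite is a tube, i.e.\ a genus-type configuration near $c$. I will evaluate both using the vertical neck-cutting relation~\eqref{eqn:vneck} and the horizontal neck-cutting relation~\eqref{eqn:hneck}. In each case the result should be a signed sum of dots on the local arcs near $c$. Since a dot on a circle can slide within its time slice, these local arcs can be identified with the edges appearing in the lemma, and the result should be exactly the claimed combination of $x_e$'s. I will check merges and splits separately, because the factor $(-1)^{S(D,\alpha)}$ changes exactly along split edges, as noted after \eqref{eqn:Smap}.

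\emph{Off-diagonal terms.} Here $d$ acts along an edge at another crossing $c'$ and $H$ acts at $c$, and these must cancel in pairs. This amounts to showing that each square formed by the forward saddle at $c'$ and the backward saddle at $c$ anticommutes once the signs are inserted. For every non-special pair of distant saddles this follows from the change-of-chronology relations together with $\delta\epsilon=(-1)\sigma$, which fixes $\epsilon_\xi\epsilon_{\xi'}$ around a face.

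The main obstacle is the off-diagonal case in which the reversed saddle at $c$ and the saddle at $c'$ form one of the two special configurations of Figure~\ref{fig:special}. There the composite is $2$-torsion and the relation is only fixed by convention. Reversing the saddle at $c$ can turn one special configuration into the other, so the required sign need not agree with $\sigma$ on the original face. I will first try to show that in type X the reversal maps the commuting configuration to the anticommuting one precisely in the situation where the marked point passes under the crossing, so that the sign assignment makes these squares anticommute. If this fails for some configurations, I will instead absorb the discrepancy by modifying the sign of $H$ on those edges using a $1$-cochain argument on the contractible cube $Q$. The restriction to type X in the lemma, and the fact that the type Y analogue (Lemma~\ref{lem:typeYslide}) uses understrands, should both come out of this case check.
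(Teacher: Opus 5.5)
Your architecture matches the paper's proof: a homotopy supported on reversed saddles at the crossing, diagonal terms evaluated by neck-cutting, and off-diagonal terms cancelled by Manion's case analysis showing that the squares \eqref{eqn:square} anticommute. The problem is the relation you set out to prove. The lemma says that a dot slides \emph{under} the crossing. With the labels of Figure~\ref{fig:crossing}, this is $x_a\simeq x_c$ for the two edges of the under-crossing strand, and it is why type X homology is a $\Lambda^*\mcol(D)$-module (Proposition~\ref{prop:5:mcolMod}). You have the dictionary reversed: overstrands and $\col(D)$ belong to type Y. Because \eqref{eqn:abcd} holds on the nose, your relation $x_a+x_c\simeq 2x_b$, modelled on Definition~\ref{def:ColD}, is literally the statement $x_b\simeq x_d$ that the dot slides \emph{over} the crossing. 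That is Lemma~\ref{lem:typeYslide}, and it fails in type X in general. If both slides held in type X, all dot maps on a knot would be homotopic, so $H_1(\Sigma(K);\ZZ)$ would act trivially on $\overline{OKh}(K)$, contradicting Proposition~\ref{prop:pretzel} together with Theorem~\ref{thm:types}.

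In your construction this is not cosmetic, because the choices you leave open decide which relation you get. Those choices are the direction of the $90^\circ$ rotation, and $(-1)^{S(D,0\alpha)}$ versus $(-1)^{S(D,1\alpha)}$. Together they amount exactly to choosing between $\pm h$ and $\pm k$:
\begin{itemize}
\item $h_{\star\alpha}=(-1)^{S(D,1\alpha)}\epsilon_{\star\alpha}h'_{\star\alpha}$, with the arrow rotated counterclockwise, gives $d_ch+hd_c=\pm(x_a-x_c)$;
\item $k_{\star\alpha}=-(-1)^{S(D,0\alpha)}\epsilon_{\star\alpha}h'_{\star\alpha}$ gives $\pm(x_b-x_d)$.
\end{itemize}
Your target forces $k$. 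Since $k_{\star\alpha}=\pm h_{\star\alpha}$, with the sign depending only on whether the saddle at the crossing is a merge or a split at $\alpha$, the squares for $h$ and $k$ differ in sign only where this changes from $\alpha$ to $\alpha'$. The only such squares with nonzero composites are those where the arcs at the crossing and at $c'$ form a special configuration on $D_{0\alpha}$. There $\sigma$ is fixed by the type convention. Meanwhile, on $D_{1\alpha}$ the reversed arc and the $c'$-arc join the same two circles, so both composites are nonzero genus-zero cobordisms with a definite relative sign. With a type X sign assignment, the square anticommutes for $h$ and commutes for $k$.

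So these faces, not the case you single out, are where type X is used. Your case is harmless. Reversal turns a special configuration on $D_{0\alpha}$ into a two-circle configuration on $D_{1\alpha}$, and vice versa. When the reversed arc and the $c'$-arc are special on $D_{1\alpha}$, both composites have genus one and vanish in the dotted category.

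Your fallback of re-signing $H$ edge by edge is also unavailable. For each $\alpha$, one of $d_{\star\alpha}h_{\star\alpha}$ and $h_{\star\alpha}d_{\star\alpha}$ is a nonzero difference of dots, which pins down the sign of $h_{\star\alpha}$. Aim instead for $x_a\simeq x_c$ with the paper's $h$; the case check then closes precisely because $\epsilon$ has type X.
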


In the present form, this lemma first appeared in the first author's thesis \cite{Mig24}, where it was also
used to establish Proposition~\ref{prop:5:colMod} below (see also \cite{WehrliTalk}).
More recently, a related result appeared in \cite{ES2025}. Although the proof given there
is shorter because it is formulated in a different setting, we will supply a proof of Lemma~\ref{lem:typeXslide} 
based on Manion's argument from \cite{Ma2014}.

\begin{proof}
Let $\epsilon$ be a type X sign assignment for the given link diagram $D$,
and let $d$ be the corresponding
differential on the odd Khovanov bracket of $D$.
Assume that the crossings of $D$ are numbered so that the crossing shown in
the lemma comes first.
Then the component of $d$ that corresponds to this crossing
can be written as $d_c=\sum_\alpha d_{\star\alpha}$,
where $\alpha$ stands for a resolution of the other crossings of $D$,
and $d_{\star\alpha}$
is the component of $d$ that connects $\llbracket D\rrbracket_{0\alpha}$ and $\llbracket D\rrbracket_{1\alpha}$.
The desired homotopy between the two dot chain maps from Lemma~\ref{lem:typeXslide}
is now given by
\[
h=\sum_\alpha h_{\star\alpha},
\]
where $h_{\star\alpha}\colon\llbracket D\rrbracket_{1\alpha}\rightarrow\llbracket D\rrbracket_{0\alpha}$
is the diagonal arrow in Figure~\ref{fig:homotopy}.
The horizontal arrows in this figure represent the edge map
$d_{\star\alpha}=\epsilon_{\star\alpha}d'_{\star\alpha}\colon\llbracket D\rrbracket_{0\alpha}\rightarrow\llbracket D\rrbracket_{1\alpha}$,
where $\epsilon_{\star\alpha}$ is the sign in $d_{\star\alpha}$, and $d'_{\star\alpha}$
is the underlying saddle cobordism.
Moreover, the vertical arrow on the left side represents the chain map $f$
which is given by the indicated difference of dot chain maps.
The vertical arrows on the right represent the components $f_{0\alpha}$ and $f_{1\alpha}$
of this chain map, where in $f_{1\star}$, we have used a solid dot to represent
a dot on the front sheet, and an empty dot to represent a dot on the back sheet.
Note also that the chronological cobordism $h'_{\star\alpha}$ that appears in the homotopy $h_{\star\alpha}=(-1)^{S(D,1\alpha)}\epsilon_{\star\alpha}h'_{\star\alpha}$ can
be obtained by flipping $d'_{\star\alpha}$ upside down and then rotating
the arrow at the saddle point counterclockwise by $90^\circ$.

\begin{figure}[H]
\includeFig{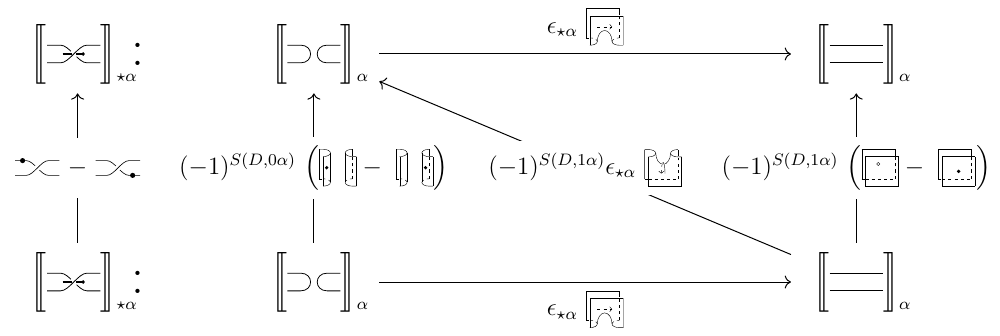}
\caption{The homotopy $h_{\star\alpha}$.}\label{fig:homotopy}
\end{figure}

Applying the horizontal neck cutting relation \eqref{eqn:hneck} to the tube in $d'_{\star\alpha}\circ h'_{\star\alpha}$, we obtain
\[
\includeFig{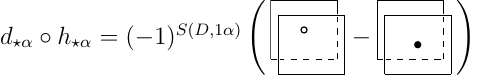}
\]
Similarly, we obtain
\[
\includeFig{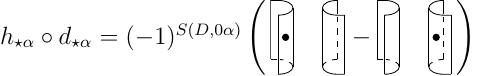}
\]
where we have used that $S(D,0\alpha)=S(D,1\alpha)$ whenever
the difference on the right-hand side of this equation is nonzero.
Thus
$d_{\star\alpha}\circ h_{\star\alpha}=f_{1\alpha}$ and
$h_{\star\alpha}\circ d_{\star\alpha}=f_{0\alpha}$, and hence
\[d_ch+hd_c=f.\]
To complete the proof of Lemma~\ref{lem:typeXslide}, we must show that $dh+hd=d_ch+hd_c$.
For this, it suffices to show that
the two terms $d_{\xi_0}h_{\star\alpha}$ and $h_{\star\alpha'}d_{\xi_1}$
cancel in $dh+hd$
for any pair of components $d_{\xi_0}\colon D_{0\alpha}\rightarrow D_{0\alpha'}$ 
and $d_{\xi_1}\colon D_{1\alpha}\rightarrow D_{1\alpha'}$ of $d-d_c$.
This was essentially shown by Manion \cite[Lemma~2.3]{Ma2014}
by using a case-by-case analysis depending on
possible configurations of two saddles.
More precisely, this analysis shows that the~square
\begin{equation}\label{eqn:square}
\begin{tikzcd}
{}& D_{1\alpha}\ar[dl,"h_{\star\alpha}"']\ar[dr,"d_{\xi_1}"]&{}\\
D_{0\alpha}\ar[dr,"d_{\xi_0}"']&{}&D_{1\alpha'}\ar[dl,"h_{\star\alpha'}"]\\
{}& D_{0\alpha'}&{}
\end{tikzcd}
\end{equation}
always anticommutes, provided the sign assignment $\epsilon$ is of type X.
\end{proof}

If $\epsilon$ instead has type Y, then dots can pass over crossings:

\begin{lemma}\label{lem:typeYslide} On the type Y odd Khovanov bracket,
\[
\includeFig{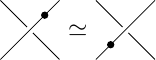}
\]
where the dots represents dot chain maps, defined as in \eqref{eqn:xedef}.
\end{lemma}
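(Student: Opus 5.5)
The plan is to follow the proof of Lemma~\ref{lem:typeXslide} step by step, changing only the placement of the dots and the type of the sign assignment. Let $\epsilon$ be a type Y sign assignment for $D$, and number the crossings so that the pictured crossing comes first. Write the corresponding component of the differential as $d_c=\sum_\alpha d_{\star\alpha}$ with $d_{\star\alpha}=\epsilon_{\star\alpha}d'_{\star\alpha}$. The chain map $f$ is now the difference of the two dot chain maps on the overstrand edges on either side of the crossing. In the $0$-resolution, the two overstrand edges lie on the two distinct arcs joined by the saddle, or on the same arc, and the same holds after changing which pair is relevant. So $f_{0\alpha}$ and $f_{1\alpha}$ are again differences of dots on the two sheets near the saddle, up to the sign $(-1)^{S(D,\cdot)}$.

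The homotopy is again taken of the form $h_{\star\alpha}=(-1)^{S(D,1\alpha)}\epsilon_{\star\alpha}h'_{\star\alpha}$, where $h'_{\star\alpha}$ is $d'_{\star\alpha}$ turned upside down. Its saddle arrow is rotated by $90^\circ$, now possibly clockwise instead of counterclockwise, chosen so that the neck-cutting signs match the overstrand dots rather than the understrand ones. Applying the horizontal neck-cutting relation \eqref{eqn:hneck} to the tubes in $d'_{\star\alpha}h'_{\star\alpha}$ and $h'_{\star\alpha}d'_{\star\alpha}$ should give $d_{\star\alpha}h_{\star\alpha}=f_{1\alpha}$ and $h_{\star\alpha}d_{\star\alpha}=f_{0\alpha}$. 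As before, we use that $S(D,0\alpha)=S(D,1\alpha)$ whenever the relevant difference of dots is nonzero, that is, whenever the saddle is a merge. Hence $d_ch+hd_c=f$.

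It then remains to show that $dh+hd=d_ch+hd_c$, i.e.\ that the square \eqref{eqn:square} anticommutes for every pair of edges $\xi_0,\xi_1$ parallel to a second crossing. I would organize this as Manion's case analysis of configurations of two saddles (the $h$-saddle and the $\xi$-saddle), sorted by how many circles they touch. The generic cases follow from the change-of-chronology relations in the same way for both types, and give anticommutativity independently of the type. The only delicate cases are those in which the two saddles, with $h'$ reoriented, form one of the two special configurations of Figure~\ref{fig:special}. In those cases the compositions are $2$-torsion, and the sign is fixed by convention.

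The main obstacle is this last step. One must check that reversing (rotating) the arrow of $h'$ interchanges the roles of the left and right special configurations relative to the type X analysis. The faces that type X needed to anticommute then become exactly those that type Y declares anticommuting. I would verify this by explicitly drawing the surgery diagrams in $S^2$ for the degenerate configurations (both arcs attached to a single circle) and comparing them with Figure~\ref{fig:special}. Alternatively, one can derive the lemma from Lemma~\ref{lem:typeXslide} by mirroring: reflecting the diagram turns overcrossings into undercrossings and exchanges the type X and type Y conventions. One would then have to track how the dot maps and the sign $(-1)^{S(D,\alpha)}$ transform under that duality.
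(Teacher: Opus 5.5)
Your setup is the paper's: reversing the arrow of $h'_{\star\alpha}$ (rotating clockwise instead of counterclockwise) is exactly the paper's $k'_{\star\alpha}$, and your neck-cutting step is fine. The reversal only matters in $k'_{\star\alpha}d'_{\star\alpha}$, where $k'$ is the upper saddle and a split. There it changes the sign of the $0\alpha$-component only, which is what replaces the understrand difference by the overstrand one.

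The gap is in your account of the anticommutativity of \eqref{eqn:square}, which is the one place where type Y is used. You place the delicate cases where the $h'$-saddle and the $\xi$-saddle form a special configuration, and say their sign is ``fixed by convention.'' This is wrong for two reasons:
\begin{itemize}
\item The X/Y convention only fixes $\sigma$ on faces of the resolution cube of $D$, and \eqref{eqn:square} is not such a face.
\item When $h'_{\star\alpha}$ and $d'_{\xi_1}$ form a special configuration on $D_{1\alpha}$, both composites in \eqref{eqn:square} are genus-one cobordisms from a circle to a circle. They are therefore $2$-torsion, and in fact zero in \mbox{$o\:\!\mathcal{C}\mathit{ob}^3_{\bullet/\ell}$}. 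So that square anticommutes for either type and either arrow, and the pictures you propose to draw would detect nothing.
\end{itemize}

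The type-dependent case is the complementary one: $d'_{\star\alpha}$ and $d'_{\xi_0}$ form a special configuration on $D_{0\alpha}$. Write $d'_{\xi_1}d'_{\star\alpha}=\sigma\,d'_{\star\alpha'}d'_{\xi_0}$ and $d'_{\xi_0}h'_{\star\alpha}=\tau\,h'_{\star\alpha'}d'_{\xi_1}$. Using $\delta\epsilon=(-1)\sigma$ and the factors $(-1)^{S(D,1\alpha)}$ in the homotopy, anticommutativity of \eqref{eqn:square} becomes $\tau=(-1)^{S(D,1\alpha)+S(D,1\alpha')}\sigma$, whenever the composites are not $2$-torsion.
\begin{itemize}
\item In this configuration, $\sigma$ is conventional and $\sigma_Y=-\sigma_X$.
\item The square \eqref{eqn:square} goes from two circles through one circle to two circles along both paths. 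It is therefore genus zero, and $\tau$ is a definite sign.
\item Here $h'_{\star\alpha}$ is a merge and $h'_{\star\alpha'}$ is a split. Reversing the arrow therefore flips $\tau$, which exactly compensates for $\sigma_Y=-\sigma_X$.
\item In every other configuration, $\sigma_X=\sigma_Y$. Either both $h'$-saddles are of the same kind, so the reversal leaves $\tau$ unchanged, or \eqref{eqn:square} is the $2$-torsion case above.
\end{itemize}
Hence the type X verification transfers verbatim. This is the ``minor modification of Manion's case-by-case analysis'' that the paper invokes, and it should replace your last paragraph.

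Your mirroring alternative points in a workable direction; the paper uses the same reflection identification in the proof of Theorem~\ref{thm:types}. As written, though, it is only a pointer, not an argument.
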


\begin{proof} The proof is almost identical to the previous proof. The main difference
is that the desired homotopy is now given by
\[
k=\sum_\alpha k_{\star\alpha}
\]
for $k_{\star\alpha}=(-1)^{S(D,1\alpha)}\epsilon_{\star\alpha}k'_{\star\alpha}$,
where $\epsilon$ is a type Y sign assignment, and the chronological cobordism $k'_{\star\alpha}$ is obtained by
reversing the arrow in $h'_{\star\alpha}$ (see Figure~\ref{fig:homotopy1}).
One can check that $k_{\star\alpha}$ is also equal to $-(-1)^{S(D,0\alpha)}\epsilon_{\star\alpha}h'_{\star\alpha}$.

\begin{figure}[h]
\includeFig{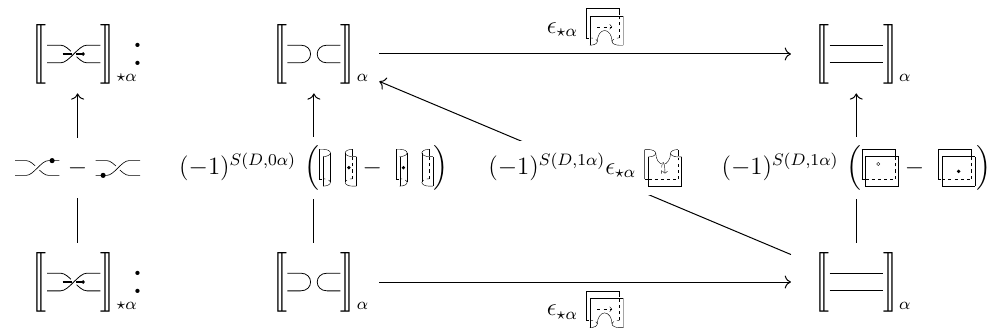}
\caption{The homotopy $k_{\star\alpha}$.}\label{fig:homotopy1}
\end{figure}

The proof that $k$ is a homotopy between the dot chain maps from
Lemma~\ref{lem:typeYslide} is analogous to the corresponding proof for $h$.
In particular, the proof that $dk+kd=d_ck+kd_c$ uses a minor
modification of Manion's case-by-case analysis from \cite{Ma2014},
and relies on the assumption that $\epsilon$ is a type Y sign assignment.
\end{proof}

\begin{figure}[h]
\includeFig{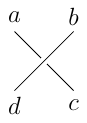}
\caption{Crossing with labeled edges.}\label{fig:crossing}
\end{figure}

Now consider a crossing of $D$, and assume that the adjacent edges are labeled $a,b,c,d$ in clockwise order, where $b$ and $d$ belong to the overcrossing strand (see Figure~\ref{fig:crossing}).
Then the associated dot chain maps satisfy
\begin{equation}\label{eqn:abcd}
x_a+x_c=x_b+x_d
\end{equation}
because their restrictions to the $0$-resolution satisfy
$x_a=x_d$ and $x_c=x_b$,
and their restrictions to the $1$-resolution satisfy $x_a=x_b$ and $x_c=x_d$.

If we use a type Y sign assignment, then
Lemma~\ref{lem:typeYslide} implies 
that the homotopy class of $x_e\colon\llbracket D\rrbracket\rightarrow\llbracket D\rrbracket$
depends only on the overstrand that contains $e$. In particular,
$x_b\simeq x_d$,
and hence \eqref{eqn:abcd} implies
\begin{equation}\label{eqn:dotmaprelation}
x_a+x_c\simeq 2x_b.
\end{equation}
Up to homotopy,
the assignment $e\mapsto x_e$ is thus compatible with 
the relation $2b-a-c=0$ that holds in the coloring module of $D$.
This leads to:

\begin{proposition}\label{prop:5:colMod}
The type Y odd Khovanov homology of $D$ is a module over $\Lambda^*\col(D)$.
\end{proposition}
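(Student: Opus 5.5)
The plan is to build the action in two stages: first an action of the free $\ZZ$-module on overstrands, then descend to $\col(D)$, then extend to the exterior algebra.

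\textbf{Generators.} For each overstrand $\gamma$ of $D$, pick an edge $e\subset\gamma$ and set $x_\gamma:=x_e$ on homology. By Lemma~\ref{lem:typeYslide}, in type Y a dot may slide past any crossing where its edge is the overstrand. Hence the homotopy class of $x_e$ depends only on $\gamma$, and $x_\gamma$ is well defined on $OKh(D)$. For an isolated trivial circle, $\gamma$ has a single edge, so nothing needs checking.

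\textbf{Descending to $\col(D)$.} Extending $\gamma\mapsto x_\gamma$ linearly gives a homomorphism from the free abelian group on overstrands to $\operatorname{End}(OKh(D))$. At each crossing, equation~\eqref{eqn:dotmaprelation} gives $x_a+x_c\simeq 2x_b$, so the relation $2b-a-c$ maps to zero. The homomorphism therefore factors through $\col(D)$, giving a linear map $\rho\colon\col(D)\to\operatorname{End}(OKh(D))$.

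\textbf{Extending to $\Lambda^*\col(D)$.} By the universal property of the exterior algebra, $\rho$ extends to an algebra map out of $\Lambda^*\col(D)$ provided $\rho(v)^2=0$ for every $v\in\col(D)$. Since $\rho(v)^2=0$ for all $v$ is equivalent to $\rho(v)\rho(w)+\rho(w)\rho(v)=0$ together with $\rho(v)^2=0$, it suffices to prove two things:
\begin{itemize}
\item $x_e^2=0$ for each edge $e$;
\item $x_ex_{e'}+x_{e'}x_e=0$ for any two edges $e,e'$.
\end{itemize}
Both are checked at the chain level, on each resolution $D_\alpha$. There, $x_e x_{e'}$ acts by $a\mapsto (-1)^{2S(D,\alpha)}a_i\wedge a_j\wedge a=a_i\wedge a_j\wedge a$, where $a_i$ and $a_j$ are the components containing the two marked points. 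The two sign factors cancel, and the statement reduces to the anticommutativity of wedge products in $\Lambda^*V(D_\alpha)$. When $a_i=a_j$ the product is $a_i\wedge a_i=0$.

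Because these are identities of chain maps, they pass to homology, and composition of homotopy-class representatives is compatible with composing the induced maps. So the relations $\rho(v)\rho(w)=-\rho(w)\rho(v)$ and $\rho(v)^2=0$ hold on $OKh(D)$. This uses the identity $\rho(v)^2=\sum_i n_i^2x_{\gamma_i}^2+\sum_{i<j}n_in_j(x_{\gamma_i}x_{\gamma_j}+x_{\gamma_j}x_{\gamma_i})$, which then vanishes term by term.

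\textbf{Main obstacle.} The genuinely nontrivial input is Lemma~\ref{lem:typeYslide}: dots slide over crossings in type Y via the case-by-case sign analysis. That lemma is already established, so here the main care is bookkeeping. One must confirm that the $(-1)^{S(D,\alpha)}$ normalizations do not introduce extra signs in compositions; they do not, since both dots act on the same vertex $\alpha$. One must also confirm that homotopies compose correctly on homology.
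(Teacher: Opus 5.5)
Your proposal is correct and follows the paper's approach. The paper also uses Lemma~\ref{lem:typeYslide} to make $x_e$ depend only on the overstrand, uses \eqref{eqn:dotmaprelation} to match the relations of $\col(D)$, and gets the exterior-algebra relations from the fact that dots anticommute and square to zero; you have simply written out in full the resolution-level sign check and the universal-property argument that the paper compresses into one sentence.
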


\begin{proof}
The module action on the type Y odd Khovanov homology is induced by the assignment $e_1\wedge\cdots\wedge e_\ell\mapsto x_{e_1}\circ\cdots\circ x_{e_\ell}$. This assignment is compatible with the exterior algebra
relations because dots anticommute and square to zero.
\end{proof}

If we instead use a type X sign assignment, then Lemma~\ref{lem:typeXslide}
and equation \eqref{eqn:abcd} imply:

\begin{proposition}\label{prop:5:mcolMod}
The type X odd Khovanov homology of $D$ is a module over $\Lambda^*\mcol(D)$.
\end{proposition}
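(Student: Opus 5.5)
The plan mirrors the proof of Proposition~\ref{prop:5:colMod}, with understrands in place of overstrands and Lemma~\ref{lem:typeXslide} in place of Lemma~\ref{lem:typeYslide}. Fix a type X sign assignment for $D$. For each understrand $u$ of $D$, pick an edge $e$ contained in $u$ and set $x_u:=x_e$ on $OKh(D)$.

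\textbf{Well-definedness on understrands.} An understrand is a maximal arc that does not pass over any crossing. Any two edges of the same understrand are therefore joined by a sequence of edges, where each consecutive pair meets at a crossing that the arc passes under. Lemma~\ref{lem:typeXslide} says a dot may be moved under a crossing up to homotopy on the type X bracket. Applying it repeatedly shows that the homotopy class of $x_e$, and so its action on homology, depends only on the understrand containing $e$.

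\textbf{The defining relation of $\mcol(D)$.} Label the edges at a crossing $a,b,c,d$ clockwise, with $b,d$ on the overstrand, as in Figure~\ref{fig:crossing}. Then $a$ and $c$ lie on the two understrands ending at the crossing. The overstrand passes straight through, so $b$ and $d$ lie on a single understrand. Equation~\eqref{eqn:abcd} holds for either type, and combined with $x_a\simeq x_c$ it gives $2x_a\simeq x_b+x_d$. This is exactly the relation imposed in $\mcol(D)$ in Definition~\ref{def:mColD}: twice the understrand passing through, minus the two understrands meeting it. Hence $u\mapsto x_u$ induces a group homomorphism $\mcol(D)\to\operatorname{End}(OKh(D))$.

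\textbf{Extension to $\Lambda^*\mcol(D)$.} Send $u_1\wedge\cdots\wedge u_\ell$ to $x_{u_1}\circ\cdots\circ x_{u_\ell}$. This respects the exterior algebra relations if each $x_e$ squares to zero and any two dot maps anticommute at chain level. Both hold componentwise, since on $\mathcal{F}_{odd}(D_\alpha)$ the map is left wedge multiplication by $(-1)^{S(D,\alpha)}a_i$. The point that needs care is the trivial circle components of $D$, which count as understrands with no crossings; there the relation is vacuous and nothing further needs checking. The only nontrivial input is Lemma~\ref{lem:typeXslide}, which may be assumed, so I expect no real obstacle beyond matching the labeling conventions of the relation in $\mcol(D)$ against \eqref{eqn:abcd}.
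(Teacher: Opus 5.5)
Your argument is the paper's: Lemma~\ref{lem:typeXslide} makes the class of $x_e$ depend only on the understrand containing $e$. Equation~\eqref{eqn:abcd} then supplies the crossing relation, and the exterior-algebra relations follow from chain-level anticommutation of dot maps exactly as in Proposition~\ref{prop:5:colMod}.

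One sentence in your relation paragraph is backwards, though. With $b,d$ on the overcrossing strand, it is $a$ and $c$ that lie on a single understrand, because the under-strand passes unbroken beneath the crossing; this is what justifies $x_a\simeq x_c$. The edges $b$ and $d$ lie on the two understrands that end at the crossing. With that correction, $2x_a\simeq x_b+x_d$ is indeed the defining relation of $\mcol(D)$.
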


Using the identification $\col(D)=H_1(\Sigma(L\cup U);\ZZ)$ from Corollary~\ref{cor:ColDH} and 
the identification $\mcol(D)=H_1(\Sigma(L\cup U);\ZZ)$ from
\eqref{eqn:mColDH}, we thus obtain the following theorem from the introduction:

\TheoremModuleNonRed*

Since the reduced coloring module is generated by differences
of overstrands, elements of $\col(D)^{red}$ act 
on the type Y odd Khovanov homology by differences of dot maps.
It follows from the definitions of
$\overline{\mathcal{F}}_{odd}(D_\alpha)$
and $\mathcal{F}^{(p)}_{odd}(D_\alpha)$ that such differences preserve 
the two summands $\overline{OKh}(D)$
and $OKh(D)^{(p)}$ in \eqref{eqn:OKhdecomp}. 
In particular, this shows that the reduced odd Khovanov homology
$\overline{OKh}(D)$ is a module over
$\Lambda^*\col(D)^{red}$ if type Y sign assignments are used,
or similarly, a module over $\Lambda^*\mcol(D)^{red}$ if type X sign assignments are used.
The identifications from Lemma~\ref{lem:ColDH} and equation \eqref{eqn:mColDHred} now show:

\TheoremModule*

\begin{remark}\label{rem:OKhDU} 
 Theorem~\ref{thm:modulenonred}
can be deduced from
Theorem~\ref{thm:module} by using the isomorphism $OKh(D)\cong\overline{OKh}(D\cup U)$
from \eqref{eqn:OKhDU}. In fact, this isomorphism intertwines the module actions
if these actions are defined as described above.
In the type Y setting, this follows because
the module action on $OKh(D)$ is defined by using the identifications $\col(D)=\col(D\cup U)^{red}=H_1(\Sigma(L\cup U);\ZZ)$,
where the first identification takes an overstrand $a$ to $a-u$ (see the proof of Corollary~\ref{cor:ColDH}).
For a type Y sign assignment and an overstrand $a$ of $D$, the element \[a-u\in\col(D\cup U)^{red}=H_1(\Sigma(L\cup U);\ZZ)\] thus acts on $OKh(D)$ by
$x_a$ and on $\overline{OKh}(D\cup U)$ by $x_a-x_u$, and these actions
are intertwined by the isomorphism from \eqref{eqn:OKhDU}.
\end{remark}

\begin{remark} By Remark~\ref{rem:relative}, we can also interpret the
odd Khovanov homology of $L$ as a module over the exterior algebra $\Lambda^*H_1(\Sigma(L),\Sigma(L)^0;\ZZ)$.
\end{remark}

\subsection{Representing Homology Classes by Embedded Arcs}\label{subs:arcs}

We will now describe the module action from 
Theorem~\ref{thm:module}
more concretely in terms of embedded arcs in $\mathbb{R}^3$.
As in the previous subsection, let $L\subset\mathbb{R}^3$ be a link with diagram $D$,
and consider an embedded oriented arc $\alpha\subset\mathbb{R}^3$
such that $\alpha\cap L=\partial\alpha$, as shown in Figure~\ref{fig:arcalpha}.
The full preimage of $\alpha$ in $\Sigma(L)$ is an unoriented simple
closed curve $\widehat{\alpha}\subset\Sigma(L)$.

To specify an orientation on $\widehat{\alpha}$,
we will assume that the arc $\alpha$ meets $L$ transversely along $\partial\alpha$, and that the
embedded graph $L\cup\alpha\subset\mathbb{R}^3$
is in general position with respect to the projection to the plane of the picture.
We will further consider a short subarc $\alpha_\epsilon\subset\alpha$ that starts at the initial point of $\alpha$,
and that does not cross any strands of $D$.

\begin{figure}[h]
\includeFig{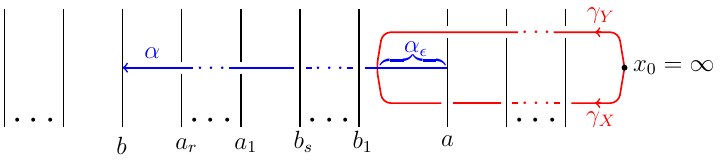}
\caption{The paths $\gamma_X$ and $\gamma_Y$ from the basepoint $x_0=\infty$ to
the endpoint of the initial subarc $\alpha_\epsilon\subset\alpha$.}\label{fig:gammaXandY}
\end{figure}

Let $\gamma_X,\gamma_Y\subset S^3\setminus L$
be two paths
from the basepoint $x_0:=\infty$ to the final endpoint of the subarc $\alpha_\epsilon$, such that $\gamma_X$
does not overcross any strands of $D$, and $\gamma_Y$ does not undercross any strands
of $D$ (see Figure~\ref{fig:gammaXandY}). Moreover, let $\widetilde{\gamma}_X,\widetilde{\gamma}_Y\subset\Sigma(L)$
be, respectively, the lifts of these paths that start at
the distinguished basepoint $\widetilde{x}_0$. Then
\[
\widehat{\alpha}=\widetilde{\alpha}_X\cup t\widetilde{\alpha}_X=\widetilde{\alpha}_Y\cup t\widetilde{\alpha}_Y,
\]
where $t$ denotes the covering transformation,
and $\widetilde{\alpha}_X$ and $\widetilde{\alpha}_Y$ denote
the lifts of the arc $\alpha$ that pass through the endpoints of $\widetilde{\gamma}_X$ and $\widetilde{\gamma}_Y$, respectively.
The arcs $\widetilde{\alpha}_X$ and $\widetilde{\alpha}_Y$ inherit orientations from the orientation of $\alpha$,
and we can thus define two oriented versions of $\widehat{\alpha}$ by
\[
\widehat{\alpha}_X:=\widetilde{\alpha}_X\cup-t\widetilde{\alpha}_X\quad\mbox{and}\quad
\widehat{\alpha}_Y:=\widetilde{\alpha}_Y\cup-t\widetilde{\alpha}_Y.
\]
It is easy to see that $\widehat{\alpha}_X=\widehat{\alpha}_Y$
precisely when the loop $\gamma_Y^{-1}*\gamma_X$
has even linking number with $L$. Equivalently, this is the case when the link diagram
$D$ has even winding number around the final point of $\alpha_\epsilon$.
Furthermore, our definitions imply that the homology classes
\[[\widehat{\alpha}_X],[\widehat{\alpha}_Y]\in H_1(\Sigma(L);\ZZ)\]
remain unchanged under ambient isotopies of $\alpha\subset\mathbb{R}^3$
that fix the link $L\subset\mathbb{R}^3$ setwise and the subarc $\alpha_\epsilon\subset\alpha$ pointwise.
Using the same notations as in Figure~\ref{fig:arcalpha}
from the introduction, we now have:

\TheoremArcAction*

\begin{proof} 
We will only prove the statement for $[\widehat{\alpha}_Y]$ and leave the proof of the other statement
to the reader. To begin with, write $\alpha$ as a composition
\[
\alpha=\beta_\epsilon*\mu*\alpha_\epsilon,
\]
where $\alpha_\epsilon$ denotes the subarc of $\alpha$ described above, $\beta_\epsilon$ denotes a final
subarc of $\alpha$ that does not cross any strands of $D$, and $\mu$ denotes the remaining middle part of $\alpha$.
Let $\gamma_Y$ be as above, and let $\eta_Y$ denote a path in $S^3\setminus L$ that starts at $x_0$
and ends at the initial point of $\beta_\epsilon$, and that does not undercross any strands of $D$.
By applying a path homotopy to $\alpha$ that is supported away from $L$, 
we can replace the subarc $\beta_\epsilon$ by $\beta_\epsilon*\eta_Y$,
the subarc $\mu$ by $\eta_Y^{-1}*\mu*\gamma_Y$, and the subarc
$\alpha_\epsilon$ by
$\gamma_Y^{-1}*\alpha_\epsilon$.
By making this replacement, we can thus assume that $\beta_\epsilon$
is a path from $x_0$ to $L$, $\mu$ is a loop based at $x_0$, and $\alpha_\epsilon$
is a path from $L$ to $x_0$.

In the remainder of this proof, we will use the same notation for an edge $e$ of the link diagram $D$,
the overstrand of $D$ that contains $e$, and
the Wirtinger generator of $\pi_1(S^3-L,x_0)$ that corresponds to this overstrand.
With this in mind, we can express
the loop $\mu$ as the composition
\[\mu=b_s*\cdots*b_1,\]
where $b_1,\ldots,b_s$ denote the Wirtinger generators
that correspond to the edges of the same name from Figure~\ref{fig:arcalpha}.
Let $\widetilde{b}_1,\ldots,\widetilde{b}_s$ denote the lifts of these generators that start at $\widetilde{x}_0$.
Likewise, let $\widetilde{\alpha}_\epsilon$ denote the lift of $\alpha_\epsilon$ that ends at $\widetilde{x}_0$,
and $\widetilde{\beta}_\epsilon$ denote the lift of $\beta_\epsilon$ that starts at $\widetilde{x}_0$.
Interpreting all of these lifts as singular $1$-simplices, we then have
\[
\widetilde{\alpha}_Y=\widetilde{\alpha}_\epsilon + \widetilde{b}_1+t\widetilde{b}_2+\ldots+t^{s-1}\widetilde{b}_s
+t^s\widetilde{\beta}_\epsilon,
\]
where the powers of $t$ arise because the path
$t^{j-1}\widetilde{b}_j$ starts at $t^{j-1}\widetilde{x}_0$ and ends at $t^j\widetilde{x}_0$.
In view of our orientation convention for $\widehat{\alpha}_Y$, this further implies
\begin{align*}
\widehat{\alpha}_Y &=\widetilde{\alpha}_Y-t\widetilde{\alpha}_Y\\
&=(1-t)\widetilde{\alpha}_Y\\
&=(1-t)\widetilde{\alpha}_\epsilon + (1-t)\widetilde{b}_1+t(1-t)\widetilde{b}_2+\ldots+t^{s-1}(1-t)\widetilde{b}_s
+t^s(1-t)\widetilde{\beta}_\epsilon.
\end{align*}
To complete the proof, we note that the linear combination
$(1-t)\widetilde{\alpha}_\epsilon=-t\widetilde{\alpha}_\epsilon+\widetilde{\alpha}_\epsilon$
can be viewed as a path in $\Sigma(L)$ from $t\widetilde{x}_0$ to $\widetilde{x}_0$.
Using the handle decomposition of $\Sigma(L)$ from the proof of Lemma~\ref{lem:ColDH},
we can further assume that this path is contained in the $2$-handle attached to $\widetilde{a}+t\widetilde{a}$,
where $a$ is the Wirtinger generator associated to the edge $a$ from Figure~\ref{fig:arcalpha},
and $\widetilde{a}$ denotes the lift of the generator $a$ that starts at $\widetilde{x}_0$.
Since $-\widetilde{a}$ is contained in the same $2$-handle and also connects $t\widetilde{x}_0$
to $\widetilde{x}_0$, we obtain that $(1-t)\widetilde{\alpha}_\epsilon$ is homologous to $-\widetilde{a}$.
Similarly, we obtain that $(1-t)\widetilde{\beta}_s$ is homologous to $\widetilde{b}$.
Lastly, we know that for any Wirtinger generator $g$,
the lift $t\widetilde{g}$ is homologous to $-\widetilde{g}$ because $\widetilde{g}+t\widetilde{g}$
bounds a $2$-handle in $\Sigma(L)$.
The previous formula for $\widehat{\alpha}_Y$ thus implies that
$\widehat{\alpha}_Y$ is homologous to the linear combination
\[
-\widetilde{a} + 2\widetilde{b}_1-2\widetilde{b}_2+\ldots+(-1)^{s-1}2\widetilde{b}_s
+(-1)^s\widetilde{b}.
\]
The second part of Theorem~\ref{thm:arcaction} now follows
because, under
 the identification $H_1(\Sigma(L);\ZZ)=\col(D)^{red}$ from Lemma~\ref{lem:ColDH}, 
this linear combination corresponds to the element
\[
-a + 2b_1-2b_2+\ldots+(-1)^{s-1}2b_s
+(-1)^sb
\]
of the reduced coloring module of $D$.
\end{proof}

\begin{remark}\label{rem:dotdifference}
If $\alpha$
does not undercross any strands of $D$, then Theorem~\ref{thm:arcaction}
implies that
$[\widehat{\alpha}_Y]$ acts by $x_b-x_a$ on the reduced type Y odd Khovanov homology of $D$.
Similarly, if $\alpha$ does not overcross any strands of $D$, then $[\widehat{\alpha}_X]$ acts by the same difference $x_b-x_a$ on the
reduced type X odd Khovanov homology of $D$.
\end{remark}

Although we have formulated Theorem~\ref{thm:arcaction} for the reduced odd Khovanov homology of $D$,
the theorem remains valid for the nonreduced odd Khovanov homology of $D$ if one interprets the homology classes
$[\widehat{\alpha}_X]$ and $[\widehat{\alpha}_Y]$ as elements of $H_1(\Sigma(L\cup U);\ZZ)$
by using the inclusion $H_1(\Sigma(L);\ZZ)\subset H_1(\Sigma(L\cup U);\ZZ)$ that follows implicitly
from the inclusion $\col(D)^{red}\subset\col(D)$.

For general elements of $H_1(\Sigma(L\cup U);\ZZ)$ (that do not come from elements of $H_1(\Sigma(L);\ZZ)$),
one can understand the module action on the nonreduced odd Khovanov homology of $D$ as follows.
Assume that the unknot $U\subset\mathbb{R}^3$ is
given by a trivial circle in the unbounded
component of $\mathbb{R}^2\setminus D$, and
let $\alpha\subset\mathbb{R}^3$ be an embedded oriented arc
which starts at a point of $U$ and ends on an edge $e$ of $D$,
and which meets $L\cup U$ in no other points.
Assume further that $\alpha$ does not cross $U$.
Then the diagram $D\cup U$ does not wind around the final point of $\alpha_\epsilon$, where
$\alpha_\epsilon\subset\alpha$ denotes a short initial subarc of $\alpha$ that does not cross $D$.
If we define the oriented simple closed curves $\widehat{\alpha}_X,\widehat{\alpha}_Y\subset\Sigma(L\cup U)$
by repeating our earlier construction, it thus follows that $\widehat{\alpha}_X=\widehat{\alpha}_Y$.
We will therefore drop the subscripts $X$ and $Y$ and 
simply write $\widehat{\alpha}$
for the oriented simple closed curve $\widehat{\alpha}_X=\widehat{\alpha}_Y$.

Now let $a_1,\ldots,a_r$ denote the edges of $D$ that $\alpha$ overcrosses
as it goes from $U$ to $e$, and let $b_1,\ldots,b_s$ denote the edges that it undercrosses.
Moreover, let $[\widehat{\alpha}]$ denote the homology class of $\widehat{\alpha}$ in $H_1(\Sigma(L\cup U);\ZZ)$.
Theorem~\ref{thm:arcaction} then implies:

\begin{corollary}\label{cor:arcaction} 

On the type X odd Khovanov homology, 
$[\widehat{\alpha}]$ acts by
\[
2x_{a_1}-2x_{a_2}+\ldots+(-1)^{r-1}2x_{a_r}+(-1)^rx_e,
\]
and on the type Y odd Khovanov homology, $[\widehat{\alpha}]$ acts by
\[
2x_{b_1}-2x_{b_2}+\ldots+(-1)^{s-1}2x_{b_s}+(-1)^sx_e.
\]
\end{corollary}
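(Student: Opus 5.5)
The plan is to deduce the corollary from Theorem~\ref{thm:arcaction} applied to the diagram $D\cup U$ of the link $L\cup U$. We then transport the resulting action to nonreduced homology using the isomorphism $OKh(D)\cong\overline{OKh}(D\cup U)$ from \eqref{eqn:OKhDU}. Remark~\ref{rem:OKhDU} says this isomorphism intertwines the module actions, where $H_1(\Sigma(L\cup U);\ZZ)$ is identified with $\col(D\cup U)^{red}$ (type Y), respectively with $\mcol(D\cup U)^{red}$ (type X).

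In the setup before the corollary, the arc $\alpha$ starts on $U$ and ends on $e$. So in the notation of Figure~\ref{fig:arcalpha}, the initial edge is $u$ (the edge of $U$) and the final edge is $b=e$. Because $\alpha$ does not cross $U$, the over- and undercrossed edges are exactly the $a_j$ and $b_j$ of $D$. Also, $\widehat{\alpha}_X=\widehat{\alpha}_Y=\widehat{\alpha}$, as explained just before the statement. Theorem~\ref{thm:arcaction} then says that, on the reduced type Y odd Khovanov homology of $D\cup U$, $[\widehat{\alpha}]$ acts by
\[
-x_u+2x_{b_1}-\ldots+(-1)^{s-1}2x_{b_s}+(-1)^sx_e .
\]
Since the coefficient sum is zero, we can rewrite this as
\[
2(x_{b_1}-x_u)-2(x_{b_2}-x_u)+\ldots+(-1)^{s-1}2(x_{b_s}-x_u)+(-1)^s(x_e-x_u).
\]
Type X is handled the same way with the $a_j$ in place of the $b_j$.

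Next we push this through \eqref{eqn:OKhDU}. By Remark~\ref{rem:OKhDU}, for an overstrand $c$ of $D$ the difference $x_c-x_u$ on $\overline{OKh}(D\cup U)$ corresponds to $x_c$ on $OKh(D)$. The reason is that the isomorphism is induced by $a_j\mapsto a_j-u$ on exterior algebras, so wedging with $c-u$ corresponds to wedging with $c$. Substituting term by term gives exactly the claimed formulas. The sign $(-1)^{S(D,\alpha)}$ in the dot maps is harmless here: $S(D\cup U,\alpha)$ and $S(D,\alpha)$ agree, since adding the split component $U$ raises both $c(D,\alpha)$ and $|D|$ by one.

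The main point to check is the type X case. There Remark~\ref{rem:OKhDU} is only spelled out for type Y with $\col$, so one must verify that the same intertwining holds with $\mcol(D)=\mcol(D\cup U)^{red}$. The argument is identical: one uses $c\mapsto c-u$ on understrands and the same algebra isomorphism. A second point is to confirm that the inclusion $H_1(\Sigma(L\cup U))$ used in the statement matches the identification $\col(D)=\col(D\cup U)^{red}$, so that $[\widehat{\alpha}]$ computed in $\Sigma(L\cup U)$ via Lemma~\ref{lem:ColDH} is the element acting on $OKh(D)$. This holds by construction of Corollary~\ref{cor:ColDH}.
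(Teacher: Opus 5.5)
Your proposal is correct and follows the paper's proof essentially step for step. You apply Theorem~\ref{thm:arcaction} to $D\cup U$, use the vanishing coefficient sum to replace each $x_c$ by $x_c-x_u$ (which kills the $-x_u$ term), and transport the result through \eqref{eqn:OKhDU} via Remark~\ref{rem:OKhDU}. Your extra checks, that $S(D\cup U,\alpha)=S(D,\alpha)$ and that the intertwining also holds in type X with $\mcol$, are details the paper leaves implicit.
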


\begin{proof}
By Theorem~\ref{thm:arcaction}, the homology class $[\widehat{\alpha}]$
acts by
\[
-x_u+2x_{a_1}-2x_{a_2}+\ldots+(-1)^{r-1}2x_{a_r}+(-1)^rx_e
\]
on the reduced type X Khovanov homology of $D\cup U$, and by
\[
-x_u+2x_{b_1}-2x_{b_2}+\ldots+(-1)^{s-1}2x_{b_s}+(-1)^sx_e,
\]
on the reduced type Y Khovanov homology $D\cup U$.
In each of these linear combinations of dot maps, the coefficient
sum is equal to zero, and thus these linear combinations do not change if
one replaces each dot map by its difference with $x_u$.
Moreover, the initial term $-x_u$ in these linear
combinations becomes $-(x_u-x_u)=0$ under this replacement.
The corollary now follows by applying the isomorphism $\overline{OKh}(D\cup U)\cong OKh(D)$ from~\eqref{eqn:OKhDU}, 
and using that this isomorphism intertwines the module actions by Remark~\ref{rem:OKhDU}.
\end{proof}

Note that, in Corollary~\ref{cor:arcaction}, the arc $\alpha$ always starts on the unknot $U$.
When representing such an arc graphically,
we can therefore omit $U$ from the picture, and imagine
that $\alpha$ starts in the unbounded component of $\mathbb{R}^2\setminus D$.
The homology class of
$\widehat{\alpha}$ in $\Sigma(L\cup U)$
then remains unchanged under ambient isotopies of $\alpha\subset\mathbb{R}^3$ that fix $L$ setwise
and keep the initial point of $\alpha$ in this unbounded component.
Note that this perspective is in line with Remark~\ref{rem:relative}, which suggests that $\alpha$ can be viewed
as an arc that starts $x_0$, and $[\widehat{\alpha}]$ can be viewed as a relative homology class
in $H_1(\Sigma(L),\Sigma(L)^0;\ZZ)$ for $\Sigma(L)^0=\{\widetilde{x}_0,t\widetilde{x}_0\}$.

In the special case where $\alpha$ does not overcross any strands of $D$ and type X sign assignments are used,
or $\alpha$ does not undercross any strands of $D$ and type Y sign assignments are used, the action of $[\widehat{\alpha}]$
on odd Khovanov homology is simply given by the dot map $x_e$. Thus, we
obtain a graphical representation of $x_e$, in terms 
of an embedded arc in $\mathbb{R}^3$ (see Figure~\ref{fig:dotarc}).

\begin{figure}[H]
\includeFig{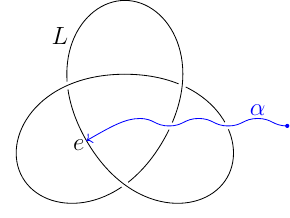}
\caption{An arc $\alpha$ that corresponds to the dot map $x_e$ in the type Y setting.}\label{fig:dotarc}
\end{figure}

Using the graphical description of the dot maps $x_e$, we can gain a renewed understanding of
the (modified) coloring module relations that hold among the dot maps.
For the type Y theory, this is explained in Figure~\ref{fig:arccrossing},
where $\alpha$ and $\alpha'$ give rise to the same homology class.
The relation $x_c=2x_b-x_a$ follows because Corollary~\ref{cor:arcaction}
tells us that $\alpha$ acts by $x_c$, and $\alpha'$ acts by $2x_b-x_a$.

\begin{figure}[h]
\includeFig{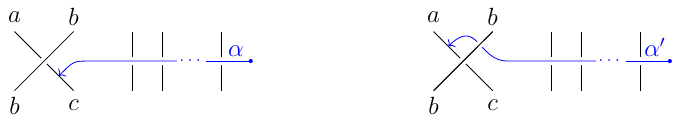}
\caption{The crossing relation in the type Y setting.
The arcs $\alpha$ and $\alpha'$ are isotopic, and their actions are given, respectively,
by $x_c$ and $2x_b-x_a$, showing that $x_c=2x_b-x_a$.}\label{fig:arccrossing}
\end{figure}

We can also use Corollary~\ref{cor:arcaction} and our graphical description of dot maps
to prove the
equivalence of the module structures in type X and type Y:

\TheoremTypes*

\begin{proof} Let $L\subset\mathbb{R}^3$
be a link, and $L'$ be the link $r(L)$, where $r$ denotes the $180^\circ$ rotation of $\mathbb{R}^3$
about an axis parallel to the plane of the picture (see Figure~\ref{fig:types}). The definition of the odd Khovanov complex of
$L$ involves a choice of arrows at the crossings of $L$, and we will assume that, in $L'$, this
arrows are rotated as well.

\begin{figure}[h]
\includeFig{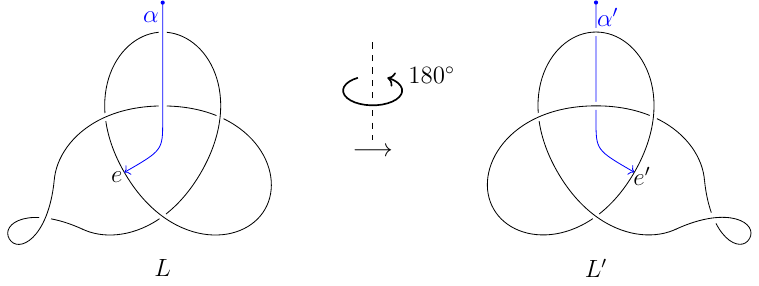}
\caption{The links $L$ and $L'$ and the arcs $\alpha$ and $\alpha'$.}\label{fig:types}
\end{figure}

The canonical isomorphism between the odd Khovanov homologies of type Y and type X
is then given by a sequence of isomorphisms
\begin{equation}\label{eqn:types}
OKh(L,\epsilon_Y)\cong OKh(L',\epsilon'_{Y})\cong OKh(L,\epsilon_X),
\end{equation}
where $\epsilon_Y$ and $\epsilon'_Y$ are sign assignments of type Y, and
$\epsilon_X$ is the sign assignment $\epsilon'_Y$, but viewed as a type X
assignment for the original link $L$. The first isomorphism in \eqref{eqn:types}
is induced by the Reidemeister invariance of odd Khovanov homology,
and the second isomorphism is a canonical identification, coming
from the fact that each resolution of the diagram of $L'$ is the
reflection in the plane $\mathbb{R}^2$ of the corresponding resolution of $L$.

Now let $e$ be an edge of the link diagram of $L$, and $\alpha$ be an arc
that corresponds to the dot map $x_e$ in the type Y setting, as shown in Figure~\ref{fig:types}.
Note that $\alpha$ starts in the unbounded component of $\mathbb{R}^2\setminus D$, and ends on $e$,
while overcrossing any intermediate strands of $L$. Let $\alpha':=r(\alpha)\subset\mathbb{R}^3$
be the rotated arc, and note that it undercrosses any intermediate arcs of $L'$.

To prove Theorem~\ref{thm:types}, we observe that the first isomorphism
in \eqref{eqn:types} intertwines the module actions of $[\widehat{\alpha}]$
and $[\widehat{\alpha}']$ because of the Reidemeister invariance of these actions, shown in
the proof of Theorem~\ref{thm:functor} below. The second isomorphism in \eqref{eqn:types}
intertwines the actions of $[\widehat{\alpha}']$ and $[\widehat{\alpha}]$ because
the formula from Corollary~\ref{cor:arcaction}
for the action of $[\widehat{\alpha}']$ in the type Y setting
coincides with the formula for the action of $[\widehat{\alpha}]$
in the type X setting. Indeed, this follows because the edges of $L'$ that $\alpha'$
undercrosses correspond precisely to the edges of $L$ that $\alpha$ overcrosses.

In summary, we see that the isomorphism
$OKh(L,\epsilon_Y)\cong OKh(L,\epsilon_X)$ from \eqref{eqn:types} intertwines
the actions of a homology class $c\in H_1(\Sigma(L\cup U);\ZZ)$ in the special case where $c$ acts
on $OKh(L,\epsilon_Y)$ by an individual dot map $x_e$.
This completes the proof of Theorem~\ref{thm:types} because the action of
$\Lambda^*H_1(\Sigma(L\cup U))$ is generated
by such dot maps.
\end{proof}

\begin{remark} The above proof shows that the canonical isomorphism
between $OKh(L,\epsilon_Y)$ and $OKh(L,\epsilon_X)$ intertwines the action of $x_e$ on $OKh(L,\epsilon_Y)$
with the action of
$2x_{a_1}-2x_{a_2}+\ldots+(-1)^{r-1}2x_{a_r}+(-1)^rx_e$
on $OKh(L,\epsilon_X)$, where $a_1,\ldots,a_r$ denote the edges of the link diagram
that the arc $\alpha$ from Figure~\ref{fig:types} overcrosses on its way from the
unbounded region to the edge $e$.
\end{remark}

\subsection{Module Structure over Rational Coefficients}

In the previous subsections, we have seen that any difference of two dot maps can
be interpreted as the action of a first homology class in $\Sigma(L)$. If $L$ is a knot,
then $\Sigma(L)$ is a rational homology sphere, and hence the rational first homology
of $\Sigma(L)$ is trivial. This implies:

\begin{proposition}
If $L$ is a knot, then the action of the dot map $x_e$ on the rational odd Khovanov homology
of $L$ is independent of the edge $e$.
\end{proposition}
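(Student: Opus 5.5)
The plan is to reduce the statement to the fact that $H_1(\Sigma(L);\mathbb{Q})=0$ when $L$ is a knot, using the module structure from Theorem~\ref{thm:module}. We work with rational coefficients throughout. As remarked in the conventions, all constructions so far remain valid over any commutative unital ring, so they hold over $\mathbb{Q}$. In particular, the rational odd Khovanov homology (of either type) is a module over $\Lambda^*\col(D)\otimes\mathbb{Q}$ in type Y, or over $\Lambda^*\mcol(D)\otimes\mathbb{Q}$ in type X, via Propositions~\ref{prop:5:colMod} and \ref{prop:5:mcolMod}. By Theorem~\ref{thm:types}, it suffices to treat one type; we take type Y. By Theorem~\ref{thm:types} (or by the symmetric argument with $\mcol$), the type X case follows the same way.

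The key steps are as follows.

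\textbf{Step 1: reduce to differences of dot maps.} Let $e,e'$ be two edges of $D$, and let $a,a'$ be the overstrands containing them. By Lemma~\ref{lem:typeYslide}, $x_e\simeq x_a$ and $x_{e'}\simeq x_{a'}$ as chain maps, so on homology $x_e-x_{e'}$ is the action of the element $a-a'\in\col(D)\otimes\mathbb{Q}$. This element has coefficient sum zero, so it lies in $\col(D)^{red}\otimes\mathbb{Q}$.

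\textbf{Step 2: identify the reduced coloring module rationally.} By Lemma~\ref{lem:ColDH},
\[
\col(D)^{red}\otimes\mathbb{Q}\cong H_1(\Sigma(L);\mathbb{Z})\otimes\mathbb{Q}=H_1(\Sigma(L);\mathbb{Q}).
\]
Since $L$ is a knot, $\Sigma(L)$ is a rational homology sphere: $|H_1(\Sigma(L);\mathbb{Z})|=|\det L|$, which is odd and in particular nonzero. Hence $a-a'=0$ in $\col(D)\otimes\mathbb{Q}$. For a self-contained version, note that $H_1(\Sigma(L);\mathbb{Z})$ is a finite group of some order $N$. Then $N(a-a')=0$ already in $\col(D)$, so $N(x_e-x_{e'})=0$ on the integral homology, and tensoring with $\mathbb{Q}$ kills this difference.

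\textbf{Step 3: conclude.} Since $a-a'$ vanishes in $\col(D)\otimes\mathbb{Q}$, the module action of $a-a'$ is zero. Therefore $x_e=x_{e'}$ on rational odd Khovanov homology, as claimed.

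The main subtlety is Step 1: module actions are only defined on homology, and we must make sure that passing to $\mathbb{Q}$ is compatible with the integral module structure. Either we rerun the construction over $\mathbb{Q}$, or we use the torsion argument, noting that $OKh(L;\mathbb{Q})=OKh(L;\mathbb{Z})\otimes\mathbb{Q}$ by universal coefficients and that dot maps commute with this identification. We will use the torsion argument, since it only needs the integral statement together with finiteness of $H_1(\Sigma(L);\mathbb{Z})$ for knots.
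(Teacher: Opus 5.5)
Your proposal is correct and is essentially the paper's argument: the paper simply observes that any difference of dot maps is the action of a class in $H_1(\Sigma(L);\mathbb{Z})$, which is finite (hence rationally trivial) when $L$ is a knot, and your torsion argument just makes the passage from $\mathbb{Z}$ to $\mathbb{Q}$ explicit. One small point: Theorem~\ref{thm:types} is not needed and does not by itself transport $x_e$ to $x_e$, since the canonical isomorphism sends the type Y map $x_e$ to a signed combination of type X dot maps, so the type X case is best handled by your symmetric argument with $\mcol(D)$ and Lemma~\ref{lem:typeXslide}.
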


In other words, this proposition states that dots can pass
freely over and under crossings of $L$ as long as $L$ is a knot
and coefficients are in $\mathbb{Q}$.
We shall see in the next subsection that this is not the case when coefficients are in $\mathbb{Z}$.
It is easy to see that this also not the case over rational coefficients when $L$ is a link with more than
one component.

An example is shown in  Figure~\ref{fig:unlink}, where the two pictures represent two
dot configurations on a nontrivial diagram of a $2$-component unlink. In the left picture, the two dots
lie on the same edge of the link diagram, and thus the corresponding dot maps compose
to zero since $x_e^2=0$. In the right picture, the two dots still lie on the same link component,
but no longer on the same edge of the link diagram.
In the type X setting, the induced dot maps still compose to zero because the two dots lie
on the same understrand of $L$. However, in the type Y setting, an application of the
relation $x_c\simeq 2x_b-x_a$ from \eqref{eqn:dotmaprelation} shows that the induced
dot maps compose to a nonzero map on odd Khovanov homology.
Explicitly, this map sends a generator of top quantum degree to twice a generator of bottom quantum
degree. In particular, it is nonzero over integer and rational coefficients (but zero over $\mathbb{F}_2$).

\begin{figure}[h]
\includeFig{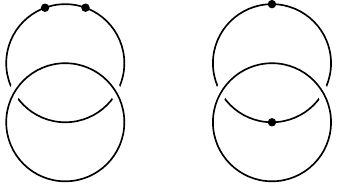}
\caption{Two dot configurations on a diagram of a $2$-component unlink.
The left configuration always induces the zero map, while the right configuration induces
a nonzero map on the rational type Y odd Khovanov homology.}\label{fig:unlink}
\end{figure}

\subsection{Module Structure for Pretzel Knots}

In \cite{Sh2011} after considering the even and odd Khovanov homologies of $9_{46}$ or the $(3,3,-3)$-pretzel knot, Shumakovich observed torsion in the odd setting that did not appear in the even setting.  He conjectured 
``This suggest a certain [...] symmetry on the odd Khovanov chain complexes for these pretzel links that cannot be explained by the construction.''

\begin{figure}[H]
    \centering
    \begin{NiceTabular}{rIc|c|c|c|c|c|c} 
	& -6 & -5 & -4 & -3 & -2 & -1 & 0 \\\boldhline
	0   & & & & & & & $\ZZ^2$ \\\hline
	-2  & & & & & & $\ZZ$ & \\\hline
	-4  & & & & & $\ZZ$ & & \\\hline
	-6  & & & & $\ZZ^2$ & & & \\\hline
	-8  & & & $\ZZ$ & & & & \\\hline
	-10 & & $\ZZ$ & & & & & \\\hline
	-12 & $\ZZ$ & & & & & & 
	\end{NiceTabular}
	\quad\quad
	\begin{NiceTabular}{rIc|c|c|c|c|c|c} 
	& -6 & -5 & -4 & -3 & -2 & -1 & 0 \\\boldhline
	0   & & & & & & & $\ZZ^2$ \\\hline
	-2  & & & & & & $\ZZ$ & \textbf{$\ZZ_3$} \\\hline
	-4  & & & & & $\ZZ$ & & \\\hline
	-6  & & & & $\ZZ^2$ & & & \\\hline
	-8  & & & $\ZZ$ & & & & \\\hline
	-10 & & $\ZZ$ & & & & & \\\hline
	-12 & $\ZZ$ & & & & & & 
	\end{NiceTabular}
    \caption{The even and odd reduced Khovanov homologies of $9_{46}$ on the left and right, respectively.
    The horizontal and vertical directions correspond to the homological grading and the quantum grading.}\label{fig:tables}
\end{figure}

The module structure we have developed up to this point provides a tool to explain some of the additional symmetries that appear in the odd Khovanov complexes when compared to the even setting, at the very least in this particular context.
In the following proposition, $\overline{OKh}(K)^0$ denotes the $0$th reduced odd Khovanov
homology of the $(3,3,-3)$-pretzel knot $K$. The complete even and odd reduced Khovanov homologies
of this knot are shown in Figure~\ref{fig:tables}.

\begin{proposition}\label{prop:pretzel}
There is a homology class in
$H_1(\Sigma(K);\ZZ)$ whose action 
on
 $\overline{OKh}(K)^{0}\cong\mathbb{Z}^2\oplus\mathbb{Z}/3\mathbb{Z}$
is given by a nonzero map
$\mathbb{Z}^2\rightarrow\mathbb{Z}/3\mathbb{Z}$.
\end{proposition}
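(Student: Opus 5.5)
Because $K$ is a knot, $H_1(\Sigma(K);\ZZ)$ is finite. For the $(3,3,-3)$-pretzel knot it has order $|\det K| = |3\cdot 3 - 3\cdot 3 - 3\cdot 3| = 9$; in fact it is $\ZZ/3\oplus\ZZ/3$, since $\Sigma(K)$ is a Seifert fibered space over $S^2$ with three exceptional fibers of order $3$. The action of a class $c$ has quantum degree $-2$ and preserves the homological grading, since dot maps are cubical chain maps of degree $-2$. So $c$ maps the $(0,0)$ entry $\ZZ^2$ of Figure~\ref{fig:tables} to the $(0,-2)$ entry $\ZZ_3$, and this is the only nonzero component of its action on $\overline{OKh}(K)^0$ that could be nontrivial. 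The plan is therefore to find one explicit difference of dot maps $x_e-x_{e'}$, which by Remark~\ref{rem:dotdifference} (or Lemma~\ref{lem:ColDH}) is the action of a class in $\col(D)^{red}\cong H_1(\Sigma(K);\ZZ)$, and to show it sends some generator in quantum degree $0$ to a generator of $\ZZ_3$.

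We work in type Y, which is allowed by Theorem~\ref{thm:types}. Take the standard $9$-crossing pretzel diagram $D$. It has three twist columns, and each column carries a natural $\ZZ/3$-coloring parameter. Choose $e$ and $e'$ to be edges on overstrands at the tops of two different columns, so that $e-e'$ is nonzero in $\col(D)^{red}$; this can be checked directly from the crossing relations $2b=a+c$.

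The computational core is to locate explicit cycles. In homological degree $0$ and quantum degree $0$, the homology lives at the all-$0$ resolution (up to the $n_-$ shift, the relevant vertices are those of degree $n_-=3$). Concretely:
\begin{itemize}
\item find an explicit cycle $z$ representing a class whose image in $\ZZ^2$ is primitive;
\item find an explicit cycle $w$ of quantum degree $-2$ generating the $\ZZ_3$;
\item show that $3w$ is a boundary while $w$ is not.
\end{itemize}
Then compute $(x_e-x_{e'})z$ as wedging the resolution components by $\pm(a_i-a_j)$, and show it is homologous to $\pm w$ or $\pm 2w$. To simplify, I would use the fact that the twist regions of the $-3$ column collapse, via the standard Bar-Natan/Putyra simplification in $o\,\mathcal{C}\mathit{ob}^3_{/\ell}$, to small complexes. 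This reduces the computation to a small complex with a handful of generators in homological degrees $-1,0,1$, where $\ZZ_3$ arises as the cokernel of a multiplication-by-$3$ map.

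The main obstacle is this explicit bookkeeping: tracking the signs from the type Y sign assignment and the $(-1)^{S(D,\alpha)}$ factors well enough to certify that the image is not divisible by $3$ modulo boundaries. If a direct chase becomes unwieldy, I would try a cleaner route. One way is to show that the map is nonzero after reduction modulo $3$: by the universal coefficient theorem, the $\ZZ_3$ torsion contributes in degrees $0$ and $1$ with $\mathbb{F}_3$ coefficients, so a nonzero map $\ZZ^2\to\ZZ/3$ corresponds to a nonzero $\mathbb{F}_3$ map in the appropriate degrees. Another is to compare with the unlink-type phenomenon of Figure~\ref{fig:unlink}, where $x_c\simeq 2x_b-x_a$ creates factors that are invisible over $\mathbb{F}_2$ but visible over $\mathbb{F}_3$.
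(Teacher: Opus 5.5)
Your preliminary reductions are correct. $H_1(\Sigma(K);\ZZ)\cong(\ZZ/3)^2$, and in type Y a class acts by a difference of dot maps on overstrands. Since these maps preserve homological degree and have quantum degree $-2$, the only component of the action on $\overline{OKh}(K)^0$ that can be nonzero is the map from the $\ZZ^2$ in quantum degree $0$ to the $\ZZ/3$ in quantum degree $-2$.

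\textbf{The gap.} The proposition is exactly the assertion that this component is nonzero for some class, and your proposal never shows it. Your three bullet points, together with the claim that $(x_e-x_{e'})z$ is homologous to $\pm w$ or $\pm 2w$, are the entire content of the statement, and they are left as an unperformed computation. The proposed simplification is also unsubstantiated: collapsing one twist column does not leave a handful of generators, and you have not controlled the signs that decide divisibility by $3$. Moreover, choosing $e,e'$ with $e-e'\neq 0$ in $\col(D)^{red}$ is not enough. The action is a homomorphism $(\ZZ/3)^2\to\operatorname{Hom}(\ZZ^2,\ZZ/3)$, which may have a kernel, so the particular class must be checked.

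\textbf{Minor slips.} Homological degree $0$ sits at the vertices with $\deg(\alpha)=n_-$, and $n_-=6$ for the chirality in the table, since homology reaches degree $-6$. With a degree-raising differential, torsion in degree $0$ contributes to $\mathbb{F}_3$-homology in degrees $0$ and $-1$, not $0$ and $1$. Your reduction to $\mathbb{F}_3$ is still legitimate here because $\overline{OKh}(K)^1=0$, but it does not remove the need to compute.

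\textbf{What the paper does instead.} The missing idea is a way to make the nonvanishing checkable by an ordinary homology computation.
\begin{enumerate}
\item Take an arc $\alpha$ with endpoints on edges $a$ and $b$, so that $[\widehat{\alpha}]$ acts by $x_a-x_b$.
\item Let $L$ be the link obtained from $K$ by inserting a twisted band along $\alpha$.
\item Resolve the band's crossings, then apply delooping and Gaussian elimination. This gives a homotopy equivalence between the reduced complex of $L$ and a complex of the form $\operatorname{Cone}(x_a-x_b)\to C(U_2)$. Here $U_2$ is a $2$-component unlink arising as a resolution of $L$.
\item The resulting long exact sequence identifies $\operatorname{coker}(x_a-x_b\colon\ZZ^2\to\ZZ/3)$ with the ordinary group $\overline{OKh}(L)^{1,1}$.
\item A KnotJob computation gives $\overline{OKh}(L)^{1,1}=0$. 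Hence $x_a-x_b$ maps onto $\ZZ/3$ and in particular is nonzero.
\end{enumerate}
The point of this detour is that existing software computes odd Khovanov homology groups but not dot maps. The cone trick therefore converts a module-action computation into a verifiable one. Without this reduction, or an actually completed chain-level computation, your argument does not establish the proposition.
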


Through a conference and conversations with the second author \cite{WehrliTalk}, Mark Ebert and L\'eo Schelstraete became aware of this result prior to this publication.  They have since published a generalization in \cite{ES2025}, finding $\mathbb{Z}/n\mathbb{Z}$ torsion in the odd Khovanov homology of
the $(n,n,-n)$-pretzel knot.
Unlike our original proof of the above proposition, their argument
uses
the foam version of odd Khovanov homology and does not require any computer calculuations.
Our original proof of  Proposition~\ref{prop:pretzel} with slightly abridge details appears below.

\begin{proof}[Proof of Proposition~\ref{prop:pretzel}]

In $K$, the $(3,3,-3)$-pretzel knot, we consider the arc $\alpha$ shown
in Figure~\ref{fig:pretzel}. As in this figure, let $a$ and $b$ be the edges of the link diagram of $K$ that contain the endpoints of $\alpha$,
so that the module action of $[\widehat{\alpha}]\in H_1(\Sigma(K);\ZZ)$ is given by $x_a-x_b$.

\begin{figure}[H]
\includeFig{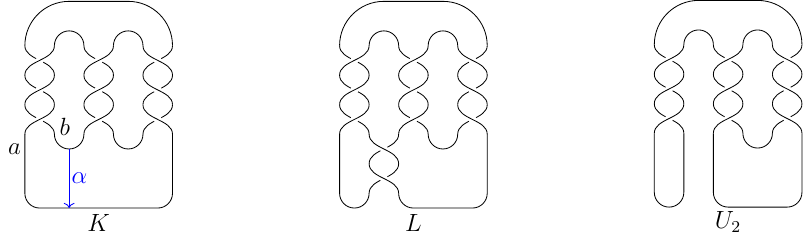}
\caption{The $(3,3,-3)$-pretzel knot $K$ with the distinguished arc $\alpha$, the link $L$, and a resolution of $L$ ambient isotopic to a
$2$-component unlink $U_2$.}\label{fig:pretzel}
\end{figure}

In Figure~\ref{fig:homotopyeq}, we illustrate a chain homotopy equivalence between the reduced odd Khovanov complex $C(L)$ and
a complex of the form $\text{Cone}(x_a-x_b)\rightarrow C(U_2)$, where $L$ and $U_2$ are as in Figure~\ref{fig:pretzel}.
The top row in Figure~\ref{fig:homotopyeq} represents the complex $C(L)$, where
each planar diagram
represents a resolution (turned sideways) of the twisted band that appears in $L$ but not in $K$.
The complex $C(L)$ is related by delooping chain isomorphisms to an intermediary complex, which admits a chain homotopy
to the final complex $\text{Cone}(x_a-x_b)\rightarrow C(U_2)$ via Gaussian elimination.
The $\epsilon_1$ and $\epsilon_2$ are the requisite linear maps produced by the sign assignment on $L$, 
and $\epsilon$ is a function of the vertex $\alpha$, given by $\epsilon=(-1)^{S(D,\alpha)}$ which produces a valid chain map from the dot and death in the odd Putyra category.  
Normally we would also need signs on the saddle cobordisms on the left hand side of the diagram, but as they are both merges we can produce a valid differential entirely with the signs $\epsilon_1$ and $\epsilon_2$.

\begin{figure}[h]
\includeFig{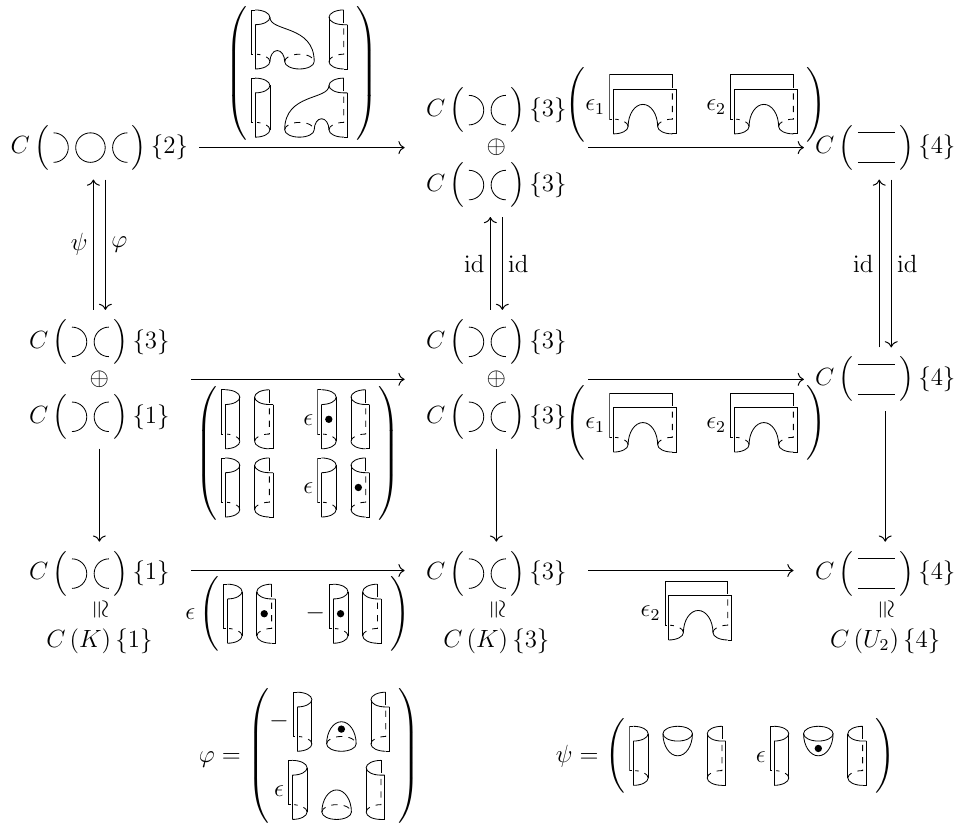}
\caption{A homotopy equivalence between the complex $C(L)$ (in the top row) and
a complex of the form $\text{Cone}(x_a-x_b)\rightarrow C(U_2)$ (in the bottom row),
where the numbers in curly brackets denote shifts of the quantum grading.}\label{fig:homotopyeq}
\end{figure}

The chain homotopy equivalence from Figure~\ref{fig:homotopyeq} gives rise to a long exact sequence
involving the homology of $\text{Cone}(x_a-x_b)$ and the reduced odd Khovanov homologies of $L$ and $U_2$.
Using this long exact sequence, one can show
\[
\operatorname{coker}(x_a-x_b)\cong\overline{OKh}(L)^{1,1}
\]
where $x_a-x_b$ is viewed as a map $\ZZ\rightarrow\ZZ/3\ZZ$ as in the proposition,
and where $\overline{OKh}(L)^{1,1}$
denotes the component of $\overline{OKh}(L)$ that lives in homological degree and quantum degree $1$.
By a computation in \texttt{KnotJob} \cite{KnotJob},
\[
\overline{OKh}(L)^{1,1}=0,
\]
whence $x_a-x_b\neq 0$.
\end{proof}

\section{Module Structure and Link Cobordisms}\label{s:cobordism}

In this section, we will related the module structure on odd Khovanov homology to maps induced by link
cobordisms and prove Theorems~\ref{thm:tubeaction} and \ref{thm:functor}.

\subsection{Maps Induced by Link Cobordisms}

Recall that  
each smooth link cobordism $F\colon L_0\rightarrow L_1$ in $\mathbb{R}^3\times I$
induces a linear map \[OKh(F)\colon OKh(L_0)\longrightarrow OKh(L_1),\]
which is
well-defined up to an overall sign, and invariant under smooth
ambient isotopies in $\mathbb{R}^3\times I$  \cite{MW2024,Spy2025}.
This map has the following properties.

\begin{lemma}\label{lem:cobproperties} Let $F$ be a smooth link cobordism.
\begin{enumerate}[label=\alph*.,ref=\alph*]
\item\label{propertyA} If $F$ is a nonempty closed surface, then $OKh(F)=0$.
\item\label{propertyB}  $OKh(F)$ can be nonzero even if $F$ contains a closed component.
\item\label{propertyC} $OKh(F)$ is not invariant under ribbon moves, defined as in \cite{Og2000}.
\item\label{propertyD} $OKh(F)$ is not invariant under smooth ambient isotopies in $S^3\times I$.
\end{enumerate}
\end{lemma}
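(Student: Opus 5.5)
We treat the four properties separately, using for each a small explicit cobordism whose odd Khovanov map can be computed directly in the dotted odd Putyra category $o\:\!\mathcal{C}\mathit{ob}^3_{\bullet/\ell}$, or via the TQFT $\mathcal{F}_{odd}$.

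\textbf{Property~(\ref{propertyA}).} Let $F$ be a nonempty closed surface in $\mathbb{R}^3\times I$. It is a cobordism $\emptyset\to\emptyset$, so $OKh(F)$ is an endomorphism of $OKh(\emptyset)=\ZZ$, i.e.\ multiplication by an integer.
\begin{itemize}
\item Grading: the quantum degree of $OKh(F)$ is $\chi(F)$. Since $OKh(\emptyset)$ is concentrated in degree zero, the map vanishes unless $\chi(F)=0$, which disposes of spheres and of genus $\geq 2$.
\item Remaining case: here the $\mathbb{F}_2$ reduction alone does not suffice, so we argue chronologically. Put $F$ in Morse position and present it as a movie, i.e.\ a composition of elementary maps (births, deaths, saddles, Reidemeister maps).
\item Key observation: closed components evaluate in $o\:\!\mathcal{C}\mathit{ob}^3_{/\ell}$ via the odd Bar-Natan relations. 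A sphere and a torus both evaluate to $0$, and higher genus vanishes by neck-cutting.
\item Since the map is a scalar on $\ZZ$, we can compute it on a convenient diagram. Using isotopy invariance, choose the movie so that at some time the complex is realized with the surface passing through a stage where a neck can be cut.
\end{itemize}
I expect the genuine obstacle to be knotted tori, whose movies involve Reidemeister moves. My plan there is to use the quantum-grading argument together with the superdegree: a torus has chronological degree $(0,0)$, so this gives no obstruction. I would therefore fall back on the fact, cited from \cite{MW2024,Spy2025}, that closed surface invariants factor through a Frobenius-type evaluation which is $0$ on the torus.

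\textbf{Properties (\ref{propertyB})--(\ref{propertyD}).} These are existence statements, so each needs one example, and I would build all three from the previously established module structure.
\begin{itemize}
\item \emph{Property~(\ref{propertyB}).} Take $\mathrm{id}_L$ together with a split-off closed component. The simplest candidate is a small sphere tubed to a sheet, but that is isotopic to the identity. Instead, take a torus $T$ linking a sheet. By Theorem~\ref{thm:tubeaction}, the resulting surface (a tube $F_\alpha$ plus a compressing sheet) acts as a dot difference. Its action on the $2$-component unlink is nonzero, as computed in the discussion of Figure~\ref{fig:unlink}.
\item \emph{Property~(\ref{propertyC}).} A ribbon move changes $F$ by attaching or removing an unknotted tube along an arc. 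By Theorem~\ref{thm:tubeaction} this multiplies by $\pm[\widehat{\alpha}]$, which is nonzero in the pretzel example of Proposition~\ref{prop:pretzel}. Comparing $F_\alpha$ with a cobordism not containing the tube then exhibits non-invariance.
\item \emph{Property~(\ref{propertyD}).} In $S^3\times I$, one can swing the arc $\alpha$ through $\infty$. This converts overcrossings into undercrossings, relating the two formulas in Theorem~\ref{thm:arcaction} up to sign. I would pick the unlink example, where the type~X and type~Y dot actions differ: one composite is $0$, the other is twice a generator. This shows that two cobordisms which are isotopic in $S^3\times I$, but not in $\mathbb{R}^3\times I$, induce different maps.
\end{itemize}
Here the main difficulty is verifying honestly that the chosen cobordisms are isotopic in $S^3\times I$.
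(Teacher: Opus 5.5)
There are genuine gaps in all four parts.

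\textbf{Part (a).} Your grading argument correctly disposes of $\chi(F)\neq 0$. But the case $\chi(F)=0$ includes disconnected surfaces, such as a sphere together with a genus-$2$ surface, not only tori. For this case you give only a vague neck-cutting plan, and then fall back on an unspecified ``Frobenius-type evaluation'' that is established nowhere. The missing idea is that the knotting of $F$ is invisible at the chain level. The map $\llbracket F\rrbracket\colon\llbracket\emptyset\rrbracket\to\llbracket\emptyset\rrbracket$ is a composite of elementary chain maps built from undotted chronological cobordisms. Hence each matrix entry is a linear combination of nonempty closed undotted chronological cobordisms in $\mathbb{R}^2\times I$, and all of these vanish in $o\:\!\mathcal{C}\mathit{ob}^3_{\bullet/\ell}$: undotted spheres are $0$, and components of genus $\geq 1$ die by neck-cutting. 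Alternatively, the initial birth maps $\ZZ$ into $\overline{OKh}(U)$, the middle part $U\to U$ preserves the reduced part, and the final death kills it.

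\textbf{Part (b).} The torus around a loop $\lambda$ linking $L$ oddly is the right example, but your justification is not. The two-dot composite on the unlink diagram of Figure~\ref{fig:unlink} is unrelated, and you never cap anything off. As in Proposition~\ref{prop:torus}, write $T$ as a sphere with equator $e$ plus a tube along a subarc $\alpha\subset\lambda$ from $e$ to $e$. By Theorems~\ref{thm:arcaction} and~\ref{thm:tubeaction}, the tube acts on $OKh(L\cup e)$ by $-x_e+2x_{a_1}-\cdots+(-1)^{r-1}2x_{a_r}+(-1)^rx_e$. Capping $e$ off kills the $2x_{a_j}$ terms (undotted sphere), while both $x_e$ terms give the same $\pm\operatorname{id}$. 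So $OKh(\operatorname{id}_L\cup T)=\pm(-1+(-1)^r)\operatorname{id}=\pm 2\operatorname{id}$ for odd linking.

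\textbf{Part (c).} This misidentifies the move. A ribbon move in the sense of \cite{Og2000} does not attach or remove a tube. Attaching a tube lowers $\chi$ by $2$, so the two maps have different quantum degrees and the comparison proves nothing about ribbon moves. A ribbon move keeps the surface type and changes how a tube passes a sheet. In the setting of Theorem~\ref{thm:tubeaction}, it pushes the core arc $\alpha$ through a strand of $L$, and Theorem~\ref{thm:arcaction} then lets you compare $\pm[\widehat{\alpha}]$ with $\pm[\widehat{\alpha}']$. For instance, pushing the tube of the torus in (b) through $L\times I$ changes $lk(L,\lambda)$ by one and turns $\pm 2\operatorname{id}$ into $0$.

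\textbf{Part (d).} The mechanism cannot work. Swinging an arc through $\infty$ is an isotopy of $\alpha$ in $S^3\setminus L$, which lifts to an isotopy of $\widehat{\alpha}$ in $\Sigma(L)$. So $[\widehat{\alpha}]$ changes at most by a sign, as you note yourself, and $OKh(F_\alpha)$ is only defined up to sign anyway. The type X versus type Y discrepancy on the unlink does not help either, since by Theorem~\ref{thm:types} the canonical isomorphism intertwines the two actions. You need a cobordism whose map is not a single homology class acting. The paper's proof of (d) uses the cobordisms $F_1,F_2\colon U\to U_2$ of \cite{MW2024}, recomputed in Subsection~\ref{s:noninvariance}. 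These are isotopic in $S^3\times I$, but their induced matrices differ in an entry ($2$ versus $0$), not just by an overall sign.
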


\begin{proof}\phantom{.}\\
\textit{Part~\ref{propertyA}.} Consider the chain map $\llbracket F\rrbracket$ that $F$ induces
on the odd Khovanov bracket. If $F$ is a nonempty closed surface, then each matrix entry in this chain map
is given a linear combination of nonempty closed chronological cobordisms without dots. Part~\ref{propertyA} of the lemma
follows because all such cobordisms evaluate to zero in the dotted odd Putyra category.

\textit{Part~\ref{propertyB}.} This follows from Proposition~\ref{prop:torus} below.

\textit{Part~\ref{propertyC}.} This was already observed in \cite{MW2024}. It also follows directly from 
Theorem~\ref{thm:arcaction} together with Theorem~\ref{thm:tubeaction} below.

\textit{Part~\ref{propertyD}.} This was already shown in \cite{MW2024}. It will also be discussed
in Subsection~\ref{s:noninvariance} and explored in more detail in \cite{MW2026a}.
\end{proof}

\begin{remark} 
If the closed surface $F$ has nonzero Euler characteristic, then part~\ref{propertyA} of the lemma can
be seen more easily by using that the map $OKh(F)\colon OKh(\emptyset)\rightarrow OKh(\emptyset)$ is homogeneous of quantum degree $\chi(F)$, and $OKh(\emptyset)=\ZZ$ is supported in quantum degree zero.
\end{remark}

\begin{remark}
In general,
part~\ref{propertyA} can also be seen by writing $F$ as a composition
of link cobordisms $\emptyset\rightarrow U\rightarrow U\rightarrow\emptyset$
given by the initial birth in $F$, followed by the middle part of $F$, followed by the final death in $F$.
By its definition, the map induced by the initial birth $\emptyset\rightarrow U$ takes $OKh(\emptyset)=\ZZ$
isomorphically to the reduced
subgroup $\overline{OKh}(U)\subset OKh(U)$. The map induced by the middle part $U\rightarrow U$ takes this
subgroup to itself, and this now shows
part~\ref{propertyA} of the lemma
because the map induced by the final death $U\rightarrow\emptyset$ takes this subgroup to
zero.
\end{remark}

\begin{remark}
Contrary to part~\ref{propertyD} of the lemma,
the map $OKh(F)$ is invariant under smooth ambient isotopies in $S^3\times I$ if
$L_0$ or $L_1$ is empty.
This follows because, in this case, any smooth ambient isotopy in $S^3\times I$
can be replaced by a smooth ambient isotopy in $\mathbb{R}^3\times I$.
\end{remark}

\subsection{Coloring Modules of Link Cobordisms}

We will now extend the notion of coloring modules to link cobordisms.
Let $F\subset\mathbb{R}^3\times I$ be a smooth link cobordism represented by
a fixed movie presentation $L_0,\dots,L_n$.

\begin{definition}\label{def:ColF}
	 The coloring module of $F$ is the abelian group given by
	 \[\text{Col}(F)\defeq\left(\bigoplus_{i=0}^n\text{Col}(L_i)\right)\bigg/\mathcal{R}\]
	where $\mathcal{R}$ represents the obvious relations identifying like overstrands in $L_{i}$ and $L_{i+1}$.	
\end{definition}

Instead of representing $F$ by the movie $L_0,\ldots,L_n$, we can represent it
by its broken surface diagram $S\subset\mathbb{R}^2\times\{0\}\times I$.
By an oversheet of $S$, we will mean a maximal connected subsurface of $F$
that does not undercross any other parts of $F$ in the broken surface diagram $S$.
To $S$, we can assign a coloring module as follows:

\begin{definition}\label{def:ColS}
The coloring module of $S$ is an abelian group which is generated by all oversheets of $S$,
and which has a relation of the form $c=2b-a$ for each arc of double points in $S$,
where $a,b,c$ are the oversheets that meet along this arc. More precisely,

\begin{equation}
\text{Col}(S)\defeq\left\{\text{oversheets of }S\,\middle|\,\includeFig{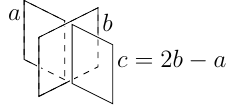}\right\}
\end{equation}

\end{definition}

It turns out that Definitions~\ref{def:ColF} and \ref{def:ColS} are equivalent, in the following sense.

\begin{lemma} There is a canonical identification {\normalfont $\text{Col}(F)=\text{Col}(S)$}.
\end{lemma}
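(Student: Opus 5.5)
We want to prove that $\col(F)=\col(S)$, where $\col(F)$ is built from the movie $L_0,\ldots,L_n$ and $\col(S)$ from the broken surface diagram $S$. The plan is to build mutually inverse homomorphisms by matching generators and relations.

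\textbf{Step 1: put the movie and the surface in general position.} Arrange that each still $L_i$ is a generic slice $S\cap(\mathbb{R}^2\times\{0\}\times\{t_i\})$. The projection of $L_i$ to $\mathbb{R}^2$ is then the slice of the broken surface diagram at height $t_i$. Under this arrangement:
\begin{itemize}
\item the double-point arcs of $S$ meet the slice transversally in the crossings of the diagram of $L_i$;
\item each overstrand of $L_i$ is a connected subarc of a unique oversheet of $S$, since a maximal arc in the slice that does not pass under anything lies in a connected subsurface that does not pass under anything.
\end{itemize}
Define $\Phi\colon\bigoplus_i\col(L_i)\to\col(S)$ by sending each overstrand to the oversheet containing it.

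\textbf{Step 2: $\Phi$ descends to $\col(F)$.}
\begin{itemize}
\item A crossing relation $2b-a-c=0$ in $\col(L_i)$ is the slice of the double-point-arc relation $c=2b-a$ of Definition~\ref{def:ColS}. The three overstrands lie on the oversheets $a,b,c$ meeting along that arc, so the relation maps to a relation of $\col(S)$.
\item The relations $\mathcal{R}$ identify like overstrands in consecutive stills. Between $t_i$ and $t_{i+1}$ the surface changes only by an elementary move: a Reidemeister move, a birth, a death, or a saddle. Corresponding overstrands sweep out a connected piece of surface that does not pass under anything, hence lie in the same oversheet. So $\Phi$ respects $\mathcal{R}$.
\end{itemize}

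\textbf{Step 3: construct the inverse $\Psi$.} Send an oversheet $A$ to the class of any overstrand of any $L_i$ contained in $A$.

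The main step is checking that this is well defined. Any two such overstrands can be joined by a path in $A$. Perturb the path to be generic with respect to height and to the branch and triple point sets of $S$. As the path crosses successive stills, the overstrands containing its points are successively identified:
\begin{itemize}
\item within a single still, two points of $A$ on the same horizontal component lie on the same overstrand, since the slice of $A$ contains no undercrossing;
\item between consecutive stills, the points are identified by $\mathcal{R}$.
\end{itemize}
The delicate cases are the critical points and branch points of the movie, where $\mathcal{R}$ must be read as identifying the obvious corresponding overstrands (for example, merging two overstrands at a saddle). One must check that $\mathcal{R}$ as defined contains exactly these identifications. At a Reidemeister II or III move an overstrand can split or merge, and I expect tracking these cases to be the main bookkeeping.

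Once $\Psi$ is well defined on generators, it respects the relations of $\col(S)$. For each double-point arc, pick a generic still meeting it; the relation $c=2b-a$ is then a crossing relation of that still, hence holds in $\col(F)$. Every generator on either side is hit, so $\Phi$ and $\Psi$ are inverse on generators. This gives the identification $\col(F)=\col(S)$.

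Canonicity means independence of the chosen movie. It follows because two movie presentations of the same broken surface diagram differ by refining the stills, and refinement only adds redundant copies related by $\mathcal{R}$.
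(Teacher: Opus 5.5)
Your proposal is correct and is essentially the paper's proof. The paper likewise sends each overstrand of a still $L_i$ to the oversheet containing it, and proves bijectivity in two steps: it joins two overstrands in the same oversheet by a path that splits into pieces lying in one still or connecting corresponding overstrands of consecutive stills, and it reads each double-arc relation of $S$ as a crossing relation in a still meeting that arc. Your explicit inverse $\Psi$ simply repackages the paper's surjectivity-plus-injectivity argument, and the paper gives the local bookkeeping at Morse and Reidemeister moves, which you defer, in no more detail than you do.
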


\begin{proof} We can define a map $\text{Col}(F)\rightarrow\text{Col}(S)$ by sending
each overstrand, $a$, in the diagram of a link $L_i$ in the movie of $F$ to the oversheet that contains $a$.
This map is well-defined because the relations that hold in $\text{Col}(F)$ are also satisfied in $\text{Col}(S)$.
It is also surjective because each oversheet of $F$ must intersect at least one of the links $L_i$, and thus
each oversheet must contain an overstrand of one of the links $L_i$.

To see that the map $\text{Col}(F)\rightarrow\text{Col}(S)$ is injective, let $a'\subset L_{i'}$
and $a''\subset L_{i''}$ be two overstrands that map to the same oversheet $f$.
We can view $f$ as a connected component of $F\setminus u$, where $u\subset F$
is the set of all points on $F$ at which $F$ undercrosses an other part of $F$.
Let $\gamma\subset f$ be a path on this connected component which starts at a point of $a'\subset f$
and ends at a point of $a''\subset f$. We can then assume that $\gamma$ is a composition of shorter
paths $\gamma_1,\ldots,\gamma_r$, where each shorter path $\gamma_j\subset f$ satisfies one of the following:
$\gamma_j$ is fully contained in an overstrand of a link $L_{i_j}$ for some $i_j$; or $\gamma_j$
connects two corresponding overstrands in the diagrams of two consecutive links $L_{i_j}$ and $L_{i_j+1}$.
In each of these cases, $\gamma_j$ starts and ends on overstrands that represent
the same element of $\text{Col}(F)$. Thus $a'=a''$ in $\text{Col}(F)$.

To complete the proof of injectivity, we must show that every relation in $\col(S)$ comes from a relation in $\col(F)$.
To see this, note that the set $u\subset F$ considered before contains finitely exceptional points
that correspond to times $t\in I$ when a Reidemeister move occurs.
If we remove these exceptional points from $u$, then $u$ becomes
a disjoint union of embedded arcs in $F$. Suppose $f'$ and $f''$ are two oversheets that meet
along such an arc $u'\subset u$, and let $f'''$ denote the oversheet of $F$ that overcrosses $u'$.
Then there is an index $i$ such that $L_i\cap u'$ corresponds to a crossing in the link diagram of $L_i$.
Hence the defining relation in $\text{Col}(S)$ that holds among the three oversheets $f'$, $f''$, and $f'''$
corresponds to a defining relation that already holds in $\text{Col}(L_i)$, and hence in $\text{Col}(F)$.
This concludes the proof of injectivity.
\end{proof}

The coloring module of $F$ has the following topological interpretation.

\begin{lemma}\label{lem:ColFH}
There is a canonical identification
{\normalfont $\text{Col}(F)=H_1(\Sigma(F\cup\operatorname{id}_U);\ZZ)$}.
\end{lemma}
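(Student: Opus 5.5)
The plan is to mimic the proof of Lemma~\ref{lem:ColDH} and Corollary~\ref{cor:ColDH} one dimension up, using a handle (or cell) decomposition of the complement of $F$ built from the broken surface diagram $S$. The identification $\col(F)=\col(S)$ established just above lets us work with oversheets and double-point arcs instead of movies.

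\textbf{Step 1: a presentation of $\pi_1$.} We use the Wirtinger-type presentation of $\pi_1\bigl((\mathbb{R}^3\times I)\setminus F,x_0\bigr)$ coming from the broken surface diagram. Its generators are the meridians of the oversheets of $S$. Its relations are one conjugation relation $c=b^{\pm1}ab^{\mp1}$ for each arc of double points, where $b$ is the sheet passing over and $a,c$ are the two undersheets. Triple points and branch points impose no further independent relations on $H_1$ of the double cover; this is the standard fact for surface diagrams, and it can alternatively be read off from the movie by noting that the relations for each slice $L_i$ are exactly the Wirtinger relations of $D_i$.

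Thickening this presentation, we get a relative handle decomposition of the complement $E_F$ in which:
\begin{itemize}
\item there is a single $0$-handle at the basepoint, or equivalently the basepoint arc $\{x_0\}\times I$;
\item there is one $1$-handle per oversheet;
\item there is one $2$-handle per double-point arc;
\item all remaining handles have index at least $3$.
\end{itemize}

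\textbf{Step 2: passing to the branched double cover.} We lift to the unbranched double cover $E_{F,2}$, in which each $1$-handle $a$ lifts to $\widetilde a$ and $t\widetilde a$. We then fill in the branch locus. This attaches, for each oversheet, a $2$-handle along $\widetilde a+t\widetilde a$, together with handles of index at least $3$; it is the exact analogue of the meridian disks in the $3$-dimensional case. The lifted double-point $2$-handles give the relations \eqref{eqn:doublecoverrelation}. After substituting $t\widetilde a=-\widetilde a$, these become $\widetilde c=2\widetilde b-\widetilde a$, so that
\[
C_1(\Sigma(F))/\operatorname{im}\partial_2=\col(S)=\col(F).
\]
Exactly as before, $H_1(\Sigma(F))$ is then the coefficient-sum-zero part, which gives $\col(F)^{red}=H_1(\Sigma(F);\ZZ)$, or equivalently the relative group $H_1(\Sigma(F),\{\widetilde x_0,t\widetilde x_0\})$ as in Remark~\ref{rem:relative}.

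\textbf{Step 3: adding $\operatorname{id}_U$.} We replace $F$ by $F\cup\operatorname{id}_U$ and use the isomorphism $\col(F)\cong\col(F\cup\operatorname{id}_U)^{red}$, $a\mapsto a-u$. This is the same argument as in Corollary~\ref{cor:ColDH}, since $\operatorname{id}_U$ is a single unlinked oversheet with no double arcs.

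\textbf{Canonicity.} Two things need checking. First, the distinguished lift of $(x_0,1/2)$ fixes the lifts $\widetilde a$. Second, the resulting isomorphism must be compatible with the slice-wise identifications $\col(L_i)=H_1(\Sigma(L_i\cup U))$ under inclusion of slices. We verify this by noting that the handle decompositions of the slices include into the one for $F$.

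\textbf{Main obstacle.} The delicate point is Step 1: justifying that the cells coming from triple points, branch points and the critical points of $F$ contribute no $1$-dimensional relations beyond the double-arc relations, and that the handle decomposition of $\Sigma(F)$ really has the described $1$- and $2$-skeleton. The first approach to try is to avoid surface-diagram handle theory altogether. We would proceed by Mayer--Vietoris / van Kampen along the movie: $\Sigma(F)$ is built from the products $\Sigma(L_i)\times I$ glued across elementary cobordisms (Reidemeister moves, births, deaths and saddles). On $H_1$, or rather on $C_1/\operatorname{im}\partial_2$, each gluing identifies the overstrands of $L_i$ and $L_{i+1}$ that correspond to each other. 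This reproduces exactly the quotient defining $\col(F)$ in Definition~\ref{def:ColF}.
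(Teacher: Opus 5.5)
Your main route differs genuinely from the paper's and can be made rigorous, but Step~1 overclaims. A Wirtinger presentation of $\pi_1(E_F)$ does not by itself give a handle decomposition of $E_F$ with one $1$-handle per oversheet and one $2$-handle per double-point arc.

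You do not need one. $H_1(E_{F,2})$ is the abelianization of the kernel of $\pi_1(E_F)\to\ZZ/2$, so the double cover of the Wirtinger presentation complex computes it. A Mayer--Vietoris argument for $\Sigma(F)=E_{F,2}\cup(\widetilde F\times D^2)$, glued along the normal circle bundle, then shows that filling in the branch locus kills exactly the lifted meridian squares $\widetilde a+t\widetilde a$. So the only external input is the Wirtinger presentation theorem for broken surface diagrams. There, triple points and branch points add no relations, and critical points of the height function are not singularities of $S$ at all. With that theorem, your relation $\widetilde c=2\widetilde b-\widetilde a$, the coefficient-sum argument and the $a\mapsto a-u$ trick from Corollary~\ref{cor:ColDH} go through. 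They give $\col(S)=H_1(\Sigma(F\cup\operatorname{id}_U);\ZZ)$, and the lemma follows via $\col(F)=\col(S)$.

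The paper instead follows what you list as a fallback, and it supplies the step your one-line sketch skips. Lemma~\ref{lem:ColFiH} treats one elementary piece at a time:
\begin{itemize}
\item A Reidemeister move gives a product.
\item For a birth, death or saddle, the cover splits as $W\cup P$. Here $P$, the preimage of a small $B\times J$ around the critical point, is a $4$-ball glued along its whole boundary, so it is invisible to $H_1$.
\item The other piece is $W=V_{i-1}\cup V_i$, with $V_{i-1}\simeq\Sigma(L_{i-1}\cup U)$, $V_i\simeq\Sigma(L_i\cup U)$ and connected intersection. Mayer--Vietoris then gives $(\col(L_{i-1})\oplus\col(L_i))/\mathcal{R}_i$.
\end{itemize}
The lemma follows by induction, gluing along the connected slices $\Sigma(L_{i-1}\cup U)$.

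Your approach gets the global $\col(S)$ presentation in one stroke and handles $\operatorname{id}_U$ by the reduced-coloring trick rather than carrying it along. On the other hand, it imports an outside theorem. It also requires a separate check that the identification is compatible with the slice identifications $\col(L_i)=H_1(\Sigma(L_i\cup U);\ZZ)$, which is what \eqref{eqn:tensorCol} and the definition of $OKh(F,c)$ actually use; you only assert this. It does hold: the inclusion $E_{L_i}\subset E_F$ sends the Wirtinger meridian of an overstrand, based by a path from $x_0$ passing under no strand, to the meridian of the oversheet containing it, and the distinguished lift of $\{x_0\}\times I$ matches the base lifts. The paper's induction is self-contained and lands directly on Definition~\ref{def:ColF}, so this compatibility is automatic there.
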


To prove this lemma, we will denote by $F_i\subset F$ be the elementary link cobordism that occurs between
the links $L_{i-1}$ and $L_i$ in the movie presentation of $F$. We will first show:

\begin{lemma}\label{lem:ColFiH}
There is a canonical identification
 {\normalfont $\text{Col}(F_i)=H_1(\Sigma(F_i\cup\operatorname{id}_U);\ZZ)$}.
\end{lemma}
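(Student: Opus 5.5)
Here $F_i$ is an elementary cobordism between $L_{i-1}$ and $L_i$: either a Reidemeister move (a movie of diagrams related by isotopy in $\mathbb{R}^3\times[t_{i-1},t_i]$), a birth, a death, or a saddle. By Definition~\ref{def:ColF}, $\col(F_i)$ is the pushout of $\col(L_{i-1})$ and $\col(L_i)$ in which overstrands unaffected by the move are identified. The plan is to compute $H_1(\Sigma(F_i\cup\operatorname{id}_U);\ZZ)$ from a relative handle decomposition of the branched double-cover. We then match it, generator by generator, with this pushout. The identifications $\col(L_j)=H_1(\Sigma(L_j\cup U);\ZZ)$ from Corollary~\ref{cor:ColDH} serve as the boundary data.

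First, we use the inclusions of the two ends,
\[
\Sigma(L_{i-1}\cup U)\hookrightarrow\Sigma(F_i\cup\operatorname{id}_U)\hookleftarrow\Sigma(L_i\cup U).
\]
These inclusions are compatible with the distinguished basepoints along $\{x_0\}\times I$, as in the Conventions. They induce a map
\[
\col(L_{i-1})\oplus\col(L_i)\longrightarrow H_1(\Sigma(F_i\cup\operatorname{id}_U);\ZZ).
\]
The relations $\mathcal{R}$ lie in its kernel, because an overstrand $a$ untouched by the move sweeps out a product region of $F_i$. The lifts $\widetilde{a}$ of the corresponding Wirtinger generators in the two ends are then homotopic through the lifted Wirtinger loops in $\Sigma(F_i\cup\operatorname{id}_U)$. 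This produces a canonical map $\col(F_i)\to H_1(\Sigma(F_i\cup\operatorname{id}_U);\ZZ)$. The remaining task is to show it is an isomorphism, which I would do case by case.

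\textbf{Reidemeister moves.} Here $\Sigma(F_i\cup\operatorname{id}_U)\cong\Sigma(L_{i-1}\cup U)\times I$, so both inclusions are isomorphisms on $H_1$. In the pushout, each of $\col(L_{i-1})$ and $\col(L_i)$ also surjects onto $\col(F_i)$: the overstrands created or destroyed by the move are expressible via the crossing relations in terms of untouched ones. One checks the three moves directly; for example, for R1 the new overstrand $c$ satisfies $c=2a-a=a$. Hence both maps agree and are isomorphisms.

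\textbf{Morse moves.} For a birth, $\Sigma$ is obtained from $\Sigma(L_{i-1}\cup U)\times I$ by attaching the double-cover of a $0$-handle branched along a disk, which is a $4$-ball glued along a $3$-ball. The pushout simply adds a new free generator. This matches $\Sigma(L_i\cup U)\cong\Sigma(L_{i-1}\cup U)\#(S^1\times S^2)$, which is the top boundary, with $\Sigma(F_i\cup\operatorname{id}_U)$ deformation retracting onto it. A death is dual. For a saddle joining overstrands $a$ and $b$, the branched double-cover of the saddle region is a $4$-dimensional $1$-handle or $2$-handle attachment. Viewed from the bottom, it adds a $2$-handle (attached along the lift of an arc between $a$ and $b$), which imposes exactly the relation $\widetilde{a}=\widetilde{b}$. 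The pushout also identifies $a$ and $b$, since after the saddle they lie on a common overstrand. Here I would use the cellular model from the proof of Lemma~\ref{lem:ColDH}, extended over $S^3\times I$ by taking products with $I$ and attaching the lifted handle. Mayer--Vietoris then shows $H_1$ is the quotient of $\col(L_{i-1})$ by $a-b$, and that this equals the image of $\col(L_i)$.

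The main obstacle is the saddle (and Reidemeister) case bookkeeping. We must check that the handle added upstairs kills exactly $\widetilde{a}-\widetilde{b}$, and not $\widetilde{a}+\widetilde{b}$ or something twisted by $t$. We must also check that the result is independent of how the saddle band is drawn relative to crossings. I would handle this by choosing the band's core arc to overcross nothing, reducing to the Remark~\ref{rem:dotdifference}-type computation. That computation shows that the lift of such an arc is homologous to $\widetilde{b}-\widetilde{a}$ with the sign convention of the proof of Theorem~\ref{thm:arcaction}.
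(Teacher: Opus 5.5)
Your route is correct in outline, but it differs from the paper's. For Morse moves the paper never uses the handle structure of branched double covers. It excises the preimage $P$ of the small box $B\times J$ containing the critical point. The complement $W$ is then the union of two thickened product pieces, $V_{i-1}\simeq\Sigma(L_{i-1}\cup U)$ and $V_i\simeq\Sigma(L_i\cup U)$, meeting in the connected set $\Sigma(T\cup U)\times(r,s)$, so Mayer--Vietoris gives $H_1(W)=(\col(L_{i-1})\oplus\col(L_i))/\mathcal{R}_i$ directly. Finally, $P$ is the branched double cover of an unknotted $(D^4,D^2)$, hence a $4$-ball glued to $W$ along its whole boundary $S^3$, so it does not change $H_1$.

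This treats births, deaths and saddles uniformly and needs neither \cite[Theorem~7.1]{OS2022} nor any computation of attaching circles. Your approach instead builds the comparison map explicitly from the end inclusions and identifies the effect of each move: a birth adds a free generator, and a saddle's $2$-handle kills $b-a$ via the arc computation of Theorem~\ref{thm:arcaction}. The price is a case analysis and external input. Your Reidemeister argument is in fact slightly more careful than the paper's. You show that $\col(L_{i-1})\to\col(F_i)$ is surjective and that its composite with the map to $H_1$ is an isomorphism, hence both maps are isomorphisms. The paper simply asserts that identifying like overstrands realizes the identification of $H_1(\Sigma(L_{i-1}\cup U))$ with $H_1(\Sigma(L_i\cup U))$.

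Several statements in your Morse cases need correcting, though none breaks the argument.
\begin{enumerate}
\item Gluing a $4$-ball along a single $3$-ball in the boundary does not change a manifold. The branched double cover of a birth is $\Sigma(L_{i-1}\cup U)\times I$ with a $4$-dimensional $1$-handle attached, or equivalently a $3$-handle attached from the top.
\item Consequently $\Sigma(F_i\cup\operatorname{id}_U)$ does \emph{not} deformation retract onto its top boundary. For $L_{i-1}=\emptyset$ it is homotopy equivalent to $S^3\vee S^1$, which has $H_2=0$, while the top boundary is $S^1\times S^2$. What is true, and all you need, is that the top inclusion is an isomorphism on $H_1$. Combined with $\col(L_i)\cong\col(F_i)$, this gives the birth case.
\item A saddle always corresponds to a $2$-handle, never a $1$-handle.
\item For the overstrand module $\col$, the condition in Theorem~\ref{thm:arcaction} that yields $[\widehat{\alpha}_Y]=b-a$ is that the arc \emph{undercross} nothing. ``Overcross nothing'' is the condition for $\mcol$ and type X. Since the band of an elementary saddle lies in a crossingless disk, this point is moot here.
\end{enumerate}
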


In the subsequent proofs, we will assume that all homology groups are defined over integer coefficients.

\begin{proof}[Proof of Lemma~\ref{lem:ColFiH}]
Suppose first that the link cobordism $F_i$ corresponds to a Reidemeister move or a planar isotopy. Then
the inclusions of $\Sigma(L_{i-1}\cup U)$ and $\Sigma(L_i\cup U)$ into
$\partial\Sigma(F_i\cup \operatorname{id}_U)\subset\Sigma(F_i\cup \operatorname{id}_U)$ induce canonical identifications
\[
H_1(\Sigma(L_{i-1}\cup U))= H_1(\Sigma(F_i\cup\operatorname{id}_U))= H_1(\Sigma(L_i\cup U)),
\]
in which the first and the last homology group correspond to $\text{Col}(L_{i-1})$
and $\text{Col}(L_i)$, respectively. We can therefore write the middle homology group as
\[
H_1(\Sigma(F_i\cup\operatorname{id}_U))=\bigl(\text{Col}(L_{i-1})\oplus\text{Col}(L_i)\bigr)\big/\mathcal{R}_i,
\]
where $\mathcal{R}_i$ denotes the relation that identifies $\text{Col}(L_{i-1})$ with $\text{Col}(L_i)$.
The lemma now follows for this $F_i$ because $\text{Col}(F_i)$ is equal to
$\bigl(\text{Col}(L_{i-1})\oplus\text{Col}(L_i)\bigr)\big/\mathcal{R}_i$, by definition.

Now suppose that the link cobordism $F_i\colon L_{i-1}\rightarrow L_i$ is an elementary Morse cobordism.
More specifically, suppose that
the diagrams of the links $L_{i-1}$ and $L_i$ are identical except in a small closed disk neighborhood
$A\subset\mathbb{R}^2$, where they differ by birth, death, or saddle move.
Suppose $t_{i-1}<t_i$ are such that $F_i=F\cap\{t\in[t_{i-1},t_i]\}$.

We can then assume that the part of $F_i\subset\mathbb{R}^3\times[t_{i-1},t_i]$ where $F_i$ is not parallel to the $[t_{i-1},t_i]$-direction
is contained in $B\times J$
for $B:=A\times [-1,1]$, and for a subinterval $J:=[r,s]\subset(t_{i-1},t_i)$. Let 
$P$ denote the preimage of $B\times J$ in $\Sigma(F_i\cup\operatorname{id}_U)$,
and $W$ denote the complement
$W:=\Sigma(F_i\cup\operatorname{id}_U)\setminus\operatorname{int}(P)$.
Then
$W$ can be written as
\begin{equation}\label{eqn:W}
W=\bigl(\Sigma(L_{i-1}\cup U)\times[t_{i-1},r]\bigr)\cup\bigl(\Sigma(T\cup U)\times[r,s]\bigr)\cup\bigl(\Sigma(L_i\cup U)\times[s,t_i]\bigr),
\end{equation}
where $T$ denotes the tangle $T:=L_i\setminus B=L_{i-1}\setminus B$,
and $\Sigma(T\cup U)$ denotes the branched double-cover of $S^3\setminus B$, branched along $T\cup U$.

In \eqref{eqn:W}, we can thicken the first and the last set that appears the union by
adding the interior of the middle set to each of them.
Then \eqref{eqn:W} implies $W=V_{i-1}\cup V_i$, where $V_{i-1}$ and $V_i$ denote the resulting thickened sets.
Since the intersection $V_{i-1}\cap V_i$ is equal to $\Sigma(T\cup U)\times (r,s)$, which is path connected, a Mayer-Vietoris argument
shows that $H_1(W)$ is isomorphic to a quotient of the sum $H_1(V_{i-1})\oplus H_1(V_i)$
by relations coming from elements of $H_1(V_{i-1}\cap V_i)$.
Moreover, we have $H_1(V_{i-1})=H_1(\Sigma(L_{i-1}\cup U))=\text{Col}(L_{i-1})$ and
$H_1(V_i)=H_1(\Sigma(L_i\cup U))=\text{Col}(L_i)$, and therefore
\[
H_1(W)=\bigl(\text{Col}(L_{i-1})\oplus\text{Col}(L_i)\bigr)\big/\mathcal{R}_i=\text{Col}(F_i),
\]
where $\mathcal{R}_i$ is the relation that identifies corresponding generators in $\text{Col}(L_{i-1})$ and
$\text{Col}(L_i)$. To complete the proof, we will show that $H_1(\Sigma(F_i\cup\operatorname{id}_U))=H_1(W)$.
Note that
\[
\Sigma(F_i\cup\operatorname{id}_U)=W\cup P
\]
for $P$ as before. In particular,
$P$ is the branched double-cover
of $B\times J$, branched along $F_i':=F_i\cap (B\times J)$.
One can check that the pair $(B\times  J,F_i')$ is
abstractly homeomorphic to the pair $(D^2\times D^2,D^2\times\{(0,0)\})$.
The branched double-cover $\Sigma(F_i')=P$ is therefore itself a $4$-ball,
and gluing this $4$-ball to $W$ along the full boundary of $P$ does not add
any generators or relations to $H_1(W)$.
\end{proof}

Thus, we have shown that there is an identification $\col(F)=H_1(\Sigma(F);\ZZ)$ in the case where $F$ is
an elementary link cobordism. We can now use this to establish such an identification for a general link cobordism $F$.

\begin{proof}[Proof of Lemma~\ref{lem:ColFH}]
Let $F_{\leq i}:=F_1\circ\ldots\circ F_i$ denote the part of $F$ that lies
between $L_0$ and $L_i$. To prove that $\col(F)=H_1(\Sigma(F))$, we show
inductively that
\[
\text{Col}(F_{\leq i})=H_1(\Sigma(F_{\leq i}\cup\operatorname{id}_U))
\]
for all $i$.
For $i=1$, this follows immediately from Lemma~\ref{lem:ColFiH}. For $i>1$, we have
$F_{\leq i}=F_i\circ F_{\leq i-1}$
and hence
\[
\Sigma(F_{\leq i}\cup\operatorname{id}_U)=
\Sigma(F_i\cup \operatorname{id}_U)\cup
\Sigma(F_{\leq i-1}\cup \operatorname{id}_U).
\]
Since $\Sigma(L_{i-1}\cup U)$ is path connected, a Mayer-Vietoris argument now implies
\[
H_1(\Sigma(F_{\leq i}\cup\operatorname{id}_U))=\bigl(H_1(\Sigma(F_i\cup\operatorname{id}_U))\oplus H_1(\Sigma(F_{\leq i-1}\cup\operatorname{id}_U)\bigr)\big/\!\sim,
\]
where $\sim$ identifies the images of elements of $H_1(\Sigma(L_{i-1}\cup U))$ in the two summands.
Using Lemma~\ref{lem:ColFiH} and the induction hypothesis, we can further identify each of these summands with a coloring module.
This leads to
\[
H_1(\Sigma(F_{\leq i}\cup\operatorname{id}_U))=\bigl(\text{Col}(F_i)\oplus\text{Col}(F_{\leq i-1})\bigr)\big/\mathcal{R}',
\]
where $\mathcal{R}'$ denotes the relation that identifies the images of elements of $\text{Col}(L_{i-1})$ in
$\text{Col}(F_i)$ and in $\text{Col}(F_{\leq i-1})$.
This completes the proof of the inductive step because the resulting quotient group is precisely $\text{Col}(F_{\leq i})$.\end{proof}

\begin{remark}\label{rem:HSigmaFone}
If $F$ is nonempty, then there is a non-canonical isomorphism
$H_1(\Sigma(F\cup\operatorname{id}_U);\ZZ)\cong H_1(\Sigma(F);\ZZ)\oplus\mathbb{Z}$.
This follows from a geometric argument similar to the one described in
Remark~\ref{rem:doublecoverLU}.
\end{remark}

\begin{remark}\label{rem:HSigmaFtwo}
If $F$ is a 2-knot, then $H_1(\Sigma(F);\ZZ)$ is odd torsion.
Note that, because of the Universal Coefficient Theorem, this is equivalent
to saying that $H_1(\Sigma(F);\mathbb{F}_2)=0$.
To show the latter, one can first prove $H_1(\Sigma(F),\widetilde{F};\mathbb{F}_2)=0$
by using the long exact sequence
$\ldots\rightarrow H_1(S^3\times I,F;\mathbb{F}_2)\rightarrow H_1(\Sigma(F),\widetilde{F};\mathbb{F}_2)\rightarrow H_1(S^3\times I,F;\mathbb{F}_2)\rightarrow\ldots$, where
$\widetilde{F}\subset\Sigma(F)$ denotes the preimage of the $2$-knot $F\subset S^3\times I$ in the branched double-cover of $S^3\times I$.
\end{remark}

\subsection{Superdegree of a Link Cobordism}\label{subs:superdecorated}

In the following, let $F\subset\mathbb{R}^3\times I$ be a smooth oriented link cobordism.
Since $F$ is oriented, each saddle that appears between two consecutive frames in a movie presentation for $F$
either merges two link components into one, or splits a link component into two.
We can therefore assign a superdegree to $F$ by repeating the definition that
we used for chronological cobordisms in Subsection~\ref{subs:chronological}.
Explicitly, the superdegree of $F$ is defined as the element $|F|\in\ZZ_2$ given by the modulo $2$ reduction of
\[\#\mbox{deaths}-\#\mbox{splits}=(\chi-D)/2,\]
where $\chi=\chi(F)$ denotes the Euler characteristic of $F$, and $D$ denotes
the difference between the number of top boundary components of $F$ and the
number of bottom boundary components of $F$.

Now suppose that the oriented link cobordism $F\colon L\rightarrow L'$
contains a vertical segment $\{p\}\times I\subset\mathbb{R}^3\times I$, and that this segment does not
meet any double or triple points in the broken surface diagram for $F$.
Let $e$ and $e'$ denote the edges of the link diagrams of $L$ and $L'$ that contain the point $p$, respectively.
Then we have the following lemma, where $|F|$ denotes the superdegree defined above.

\begin{lemma}\label{lem:xcommute}
$x_{e'}\circ OKh(F)=(-1)^{|F|}OKh(F)\circ x_e$.
\end{lemma}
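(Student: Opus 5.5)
We reduce to elementary cobordisms. Choose a movie presentation $L=L_0,\ldots,L_n=L'$ of $F$ in which the point $p$ stays fixed and lies on an edge $e_i$ of each diagram $L_i$; this is possible because the segment $\{p\}\times I$ avoids the double and triple point set of the broken surface diagram, so every Reidemeister move and every Morse move can be performed away from a small neighborhood of $p$. Then $OKh(F)$ is, up to an overall sign, the composite of the maps $OKh(F_i)$ induced by the elementary pieces. The superdegree is additive under composition, since $\#\mbox{deaths}-\#\mbox{splits}$ adds. It therefore suffices to prove $x_{e_i}\circ OKh(F_i)=(-1)^{|F_i|}OKh(F_i)\circ x_{e_{i-1}}$ for each $i$. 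The overall sign ambiguity causes no trouble, because the same representative of the map appears on both sides.

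\textbf{Morse moves.} Here the chain map $\llbracket F_i\rrbracket$ has, at each vertex $\alpha$ of the resolution cube, a component given by a birth, death or saddle cobordism supported in a disk away from $p$. By \eqref{eqn:xedef}, the dot map at $\alpha$ is $(-1)^{S(D,\alpha)}$ times a dot placed on the component of $D_\alpha$ through $p$. Since $p$ lies outside the support of the move, the dot can slide vertically across the cobordism onto the corresponding component of the target resolution. By the relations of Subsection~\ref{subs:dotchroncobs}, this sliding introduces the sign $(-1)^{|S|}$, where $|S|$ is the superdegree of the chronological cobordism. To finish, we compare $S(D_{i-1},\alpha)$ with $S(D_i,\alpha)$ using \eqref{eqn:Smap}:
\begin{itemize}
\item for a birth or death, $c$ changes by $\pm1$ and $|D|$ changes by $\pm1$;
\item for a merge or split, $c$ and $|D|$ change in a correlated way.
\end{itemize}
Together with the local sign $(-1)^{|S|}$, this should yield exactly $(-1)^{|F_i|}$ uniformly in $\alpha$. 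Any sign conventions in the definition of $\llbracket F_i\rrbracket$, such as signs attached to the vertex maps, are common to both sides and cancel.

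\textbf{Reidemeister moves.} These have superdegree zero, so we must show that the Reidemeister chain maps and their homotopy inverses commute with $x_e$ up to homotopy. Their components are built from chronological cobordisms that are identities near $p$, together with cancellation (Gaussian elimination) maps involving births, deaths and saddles in a disk away from $p$. The same vertical dot sliding applies componentwise, and the parity bookkeeping of $S(D,\alpha)$ again matches the sign.

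\textbf{Main obstacle.} The hard part is this sign bookkeeping for the Reidemeister maps, where the components connect vertices $\alpha,\beta$ of different cubes. There we must check that $S(D,\alpha)$ and $S(D',\beta)$ differ by precisely the superdegree of the connecting cobordism, so that the net sign is $+1$. I would try first to verify this via the general fact that every component of a degree-$(a,b)$ chain map changes $S$ by $b$ modulo $2$. This relies on the identity, stated after \eqref{eqn:Smap}, that $S$ increases by one exactly along split saddles. If that fails, the fallback is a direct case check for moves R1–R3.
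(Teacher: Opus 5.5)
Your argument is the paper's argument, carried out move by move. The paper writes $x_e=(-1)^{s(\alpha)}u_e$, slides the dot past an arbitrary component $f\colon D_\alpha\langle s\rangle\to D'_{\alpha'}\langle s'\rangle$ of $\llbracket F\rrbracket$ to pick up $(-1)^{|f|}$, and concludes because $|f|+s+s'$ (the superdegree of $f$ with the formal shifts included) equals $|F|$; your $S$-bookkeeping for Morse moves is a hand verification of this fact for the elementary Morse maps. The step you flag as the main obstacle for Reidemeister maps is exactly this homogeneity of $\llbracket F\rrbracket$ in the shifted supergrading. The paper imports it from \cite{MW2024} rather than re-deriving it, and works with the components of $\llbracket F\rrbracket$ directly, so no case check of R1--R3 is needed. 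Note that this homogeneity is a property of the specific Reidemeister chain maps (built in the graded category); it does not follow from the edge identity for $S$ alone, as your proposed first route suggests.
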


\begin{proof} We will prove this directly for the map that $F$ induces on the formal Khovanov bracket.
Recall that the formal Khovanov bracket admits a graded refinement, in which each
resolution $D_\alpha$ in the resolution cube comes with a formal shift of the supergrading.
In this refinement, the shift of the superdegree at the vertex $\alpha$ is given by the modulo $2$
reduction $s(\alpha)\in\ZZ_2$ of the quantity $S(D,\alpha)\in\ZZ$ from \eqref{eqn:Smap}.
As in \eqref{eqn:xedef}, let
 $(x_e)_\alpha$ denote the restriction of the dot chain map $x_e\colon\llbracket D\rrbracket\rightarrow\llbracket D\rrbracket$ to $D_\alpha$.
For simplicity, we will henceforth drop the subscript $\alpha$ and simply write $x_e$ for this restriction.
If $D_\alpha\langle s\rangle$ denotes the resolution $D_\alpha$ together its assigned
shift of the supergrading by $s=s(\alpha)\in\ZZ_2$, we can then write $x_e$ as
\begin{equation}\label{eqn:xu}
x_e=(-1)^s u_e,
\end{equation}
where $u_e$ denotes the underlying dotted identity cobordism.
Now consider a component
\[
f\colon D_\alpha\langle s\rangle\longrightarrow D'_{\alpha'}\langle s'\rangle
\]
of the chain map $\llbracket F\rrbracket\colon\llbracket L\rrbracket\rightarrow\llbracket L'\rrbracket$, where $D_{\alpha}$ and $D'_{\alpha'}$
denote resolutions of the diagrams of $L$ and $L'$, respectively, and $s=s(\alpha)$ and $s'=s(\alpha')$
are their associated shifts of the supergrading.
Let $|f|$ denote the superdegree of $f$, viewed as a morphism
between the unshifted objects $D_\alpha$ and $D'_{\alpha'}$.
We then have
\begin{align*}
x_{e'}\circ f &= (-1)^{s'}u_{e'}\circ f\\
&=(-1)^{|f|+s'}f\circ u_e\\
&=(-1)^{|f|+s'+s}f\circ x_e,
\end{align*}
where the second equality follows because $u_e$ has odd superdegree in the
odd dotted Putyra category, and the other equalities follow from \eqref{eqn:xu}.
Note that, in the last line, $|f|+s'+s$ is precisely the superdegree
of $f$ after taking into account the formal grading shifts. 
It was observed in \cite{MW2024} that this superdegree coincides with the superdegree of the
link cobordism $F$,
and the lemma thus follows.
\end{proof}

\subsection{Decorated Link Cobordisms}\label{subs:decorated}

We will now complete the definition of the decorated link cobordism category 
$\mathcal{C}\mathit{ob}^4_\Lambda$ from the introduction.
Recall that the objects in this category are generic oriented links $L\subset\mathbb{R}^3$,
and the morphisms are equivalence classes of pairs $(F,c)$, where $F$ is a generic
smooth oriented link cobordism in $\mathbb{R}^3\times I$, and $c$ is an element
\[
c\in\Lambda^*H_1(\Sigma(F\cup\operatorname{id}_U);\ZZ).
\]
As discussed earlier, we are assuming that the branched double-cover $\Sigma(F\cup\operatorname{id}_U)$ is equipped with a
distinguished preimage of the basepoint $(x_0,1/2)\in S^3\times I$, so that it is unique up to unique based isomorphism of branched covering spaces. The composition in $\mathcal{C}\mathit{ob}^4_\Lambda$ is given by
\begin{equation}\label{eqn:composition}
(F,c)\circ (F',c'):=(F\circ F',(-1)^{|c||F'|}c\wedge c'),
\end{equation}
where $c$ is assumed to be wedge product of $|c|$ elements of
$H_1(\Sigma(F\cup\operatorname{id}_U);\ZZ)$.
To complete the definition of $\mathcal{C}\mathit{ob}^4_\Lambda$,
we must specify the equivalence relation that we impose on pairs $(F,c)$.
This will occupy the rest of this subsection.

We start by fixing a $3$-ball $B\subset\mathbb{R}^3$, and we will assume throughout the following
discussion that the link cobordism $F\subset F\cup\operatorname{id}_U$ is contained in $B\times I$,
while the identity cobordism of $U$ is contained in $(S^3\setminus B)\times I$.
Let
\[
D(S^3\times I)\subset\operatorname{Diff}^+(S^3\times I)
\]
denote the set of all diffeomorphisms $\phi\colon S^3\times I\rightarrow S^3\times I$ that are
isotopic to the identity of $S^3\times I$ via an isotopy that fixes all points of $((S^3\setminus B)\times I)\cup V_\phi$
for a neighborhood $V_\phi$ of $S^3\times\partial I$.
Note that if $F$ is as above, then any $\phi\in D(S^3\times I)$ lifts to a unique map
$\widetilde{\phi}\colon\Sigma(F\cup\operatorname{id}_U)\rightarrow\Sigma(\phi(F)\cup\operatorname{id}_U)$
of based manifolds. Moreover, this map $\widetilde{\phi}$ induces an algebra isomorphism
$\widetilde{\phi}_*\colon\Lambda^*H_1(\Sigma(F\cup\operatorname{id}_U);\ZZ)\rightarrow\Lambda^*H_1(\Sigma(\phi(F)\cup\operatorname{id}_U);\ZZ)$.
We now impose the equivalence relations
\begin{equation}\label{eqn:equiv}
(F,c)\sim (\phi(F),\widetilde{\phi}_*(c))
\end{equation}
on morphisms of  $\mathcal{C}\mathit{ob}^4_\Lambda$,
where $\phi$ runs through all diffeomorphisms $\phi\in D(S^3\times I)$.
After imposing this relation, we can interpret
the second entry in the pair $(F,c)$ as an element of the quotient set
\begin{equation}\label{eqn:quotientset}
\bigl(\Lambda^*H_1(\Sigma(F\cup\operatorname{id}_U);\ZZ)\bigr)\big/ \bigl\{c\sim\widetilde{\phi}_*(c)\bigr\},
\end{equation}
where $\phi$ runs through all $\phi\in D(S^3\times I)$ with $\phi(F)=F$.
Although this quotient set is in general no longer an additive group (let alone an algebra),
the composition of morphisms in $\mathcal{C}\mathit{ob}^4_\Lambda$ remains well-defined.
In fact, if $\phi$ and $\phi'$ are two diffeomorphisms in $D(S^3\times I)$,
and $F$ and $F'$ are as in \eqref{eqn:composition}, then
\[
\phi(F)\circ\phi'(F')=\psi(F\circ F'),
\]
where $\psi:=\phi\cup\phi'\in D(S^3\times I)$ denotes the diffeomorphism obtained by
stacking $\phi$ and $\phi'$ vertically.
If $c$ and $c'$ are as in \eqref{eqn:composition}, then this further implies
\[
\widetilde{\phi}_*(c)\wedge\widetilde{\phi'}_*(c')=\widetilde{\psi}_*(c\wedge c').
\]
It follows that $(\phi(F),\widetilde{\phi}_*(c))\circ(\phi'(F'),\widetilde{\phi'}_*(c'))$ is equal to
\[(\psi(F\circ F'),(-1)^{|c||F'|}\widetilde{\psi}_*(c\wedge c'))\sim(F\circ F',(-1)^{|c||F'|}c\wedge c')=(F,c)\circ(F',c'),
\]
which proves that the composition in $\mathcal{C}\mathit{ob}^4_\Lambda$ is compatible with
\eqref{eqn:equiv}.

\begin{remark}\label{rem:groupaction}
The quotient set from \eqref{eqn:quotientset} can be viewed as an orbit set with respect to the 
action of the group of all $\phi\in D(S^3\times I)$ with $\phi(F)=F$.
In certain special cases, this group action is trivial.
In particular, this happens whenever. the map
$H_1(\partial\Sigma(F\cup\operatorname{id}_U);\ZZ)\rightarrow H_1(\Sigma(F\cup\operatorname{id}_U);\ZZ)$ is surjective.
In this case, each $\phi\in D(S^3\times I)$ with $\phi(F)=F$ acts as the identity on
$\Lambda^*H_1(\Sigma(F\cup\operatorname{id}_U);\ZZ)$
because it restricts to the identity on $S^3\times\partial I$, by the definition of $D(S^3\times I)$.
\end{remark}

\subsection{Odd Khovanov Maps for Decorated Link Cobordisms}\label{subs:oddmaps}

We will now prove the following theorem from the introduction.

\TheoremFunctor*

In this subsection, we will define the map $OKh(F,c)\colon OKh(L)\rightarrow OKh(L')$ assigned to a decorated link cobordism $(F,c)\colon L\rightarrow L'$.
We will also show that the assignment $(F,c)\mapsto OKh(F,c)$ is compatible
with the composition of morphisms in \mbox{$\mathcal{C}\mathit{ob}^4_\Lambda$} (Lemma~\ref{lem:composition}), and that the
module structure on odd Khovanov homology is natural (). In the next subsection,
we will complete the proof of Theorem~\ref{thm:functor} by showing that the assignment $(F,c)\mapsto OKh(F,c)$ is also compatible with the
equivalence relation~\eqref{eqn:equiv} (Lemma~\ref{lem:OKhFcinvariance}).
For simplicity, we will assume that type Y sign assignments are used, but all of
our arguments can be adapted to the type X setting.

Consider a smooth oriented link cobordism $F\colon L\rightarrow L'$ presented by a movie
$L_0,\ldots,L_n$, and et $F_i$ dentoe the part of $F$ that lies between
$L_{i-1}$ and $L_i$ so that $F=F_n\circ\cdots\circ F_1$. 
Let $F_{\leq i}:=F_i\circ\cdots\circ F_1$.
For
an edge $e$ of the link diagram of $L_i$, let
$z_e\colon OKh(L_i)\rightarrow OKh(L_i)$ denote the map given by
\begin{equation}\label{eqn:zedef}
z_e:=(-1)^{|F_{\leq i}|}x_e,
\end{equation}
where $|F_{\leq i}|$ denotes the superdegree of $F_{\leq i}$.
Now suppose that $e$ and $e'$ are corresponding edges
of the link diagrams of $L_{i-1}$ and $L_i$, respectively, in the sense that
these two link diagrams agree near a point $p\in\mathbb{R}^2$
that lies on both $e$ and $e'$.
Then:

\begin{lemma}\label{lem:zcommute}
$z_{e'}\circ OKh(F_i)=OKh(F_i)\circ z_e$.
\end{lemma}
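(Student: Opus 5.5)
The plan is to reduce the statement to Lemma~\ref{lem:xcommute} applied to the elementary cobordism $F_i\colon L_{i-1}\rightarrow L_i$, and then to track how the signs in the definition \eqref{eqn:zedef} of $z_e$ and $z_{e'}$ combine. Since $F_{\leq i}=F_i\circ F_{\leq i-1}$ and the superdegree $(\chi-D)/2$ is additive under composition, we have
\[
|F_{\leq i}|=|F_i|+|F_{\leq i-1}|\pmod 2,
\]
so that $z_{e'}=(-1)^{|F_{\leq i-1}|}(-1)^{|F_i|}x_{e'}$ and $z_e=(-1)^{|F_{\leq i-1}|}x_e$.

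First, I would check that the hypothesis of Lemma~\ref{lem:xcommute} holds for $F_i$. The point $p$ lies on both $e$ and $e'$, and the diagrams of $L_{i-1}$ and $L_i$ agree near $p$. After a small perturbation of $p$, we may therefore assume that $F_i$ contains the vertical segment $\{p\}\times[t_{i-1},t_i]$. We may also assume that this segment avoids all double and triple points of the broken surface diagram, because the elementary move (a Reidemeister move, planar isotopy, birth, death or saddle) is supported in a small disk away from $p$. Lemma~\ref{lem:xcommute} then gives
\[
x_{e'}\circ OKh(F_i)=(-1)^{|F_i|}OKh(F_i)\circ x_e.
\]

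Multiplying both sides by $(-1)^{|F_{\leq i-1}|}(-1)^{|F_i|}$ gives
\[
z_{e'}\circ OKh(F_i)=(-1)^{|F_{\leq i-1}|}OKh(F_i)\circ x_e=OKh(F_i)\circ z_e,
\]
as required. The convention $|F_{\leq 0}|=0$ handles the base case $i=1$, where $F_{\leq 0}$ is the identity cobordism.

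I expect the only real obstacle to be the hypothesis of Lemma~\ref{lem:xcommute}, especially when $F_i$ is a Reidemeister move. In that case the relevant map is the chain homotopy equivalence from the invariance proof rather than a single cobordism. The proof of Lemma~\ref{lem:xcommute} handles this at the level of components $f$ of the chain map: each component is a chronological cobordism which is an identity sheet near $p$, so the dot can slide vertically past it, picking up the sign $(-1)^{|f|}$. The argument therefore applies verbatim. If needed, one would note that the total shifted superdegree of each component equals $|F_i|$, as cited from \cite{MW2024}.
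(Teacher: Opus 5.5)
Your proof is correct and follows the paper's argument: you apply Lemma~\ref{lem:xcommute} to the elementary cobordism $F_i$ and use additivity of superdegree, $|F_{\leq i}|=|F_i|+|F_{\leq i-1}|$, to absorb the sign $(-1)^{|F_i|}$. Your check of the vertical-segment hypothesis, and your remark about Reidemeister moves, add detail that the paper leaves implicit.
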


\begin{proof} We have
\begin{align*}
z_{e'}\circ OKh(F_i) &= (-1)^{|F_{\leq i}|}\,x_{e'}\circ OKh(F_i)\\
&=(-1)^{|F_{\leq i}|+|F_i|}\,OKh(F_i)\circ x_e\\
&=(-1)^{|F_{\leq i-1}|}\,OKh(F_i)\circ x_e\\
&=OKh(F_i)\circ z_e,
\end{align*}
where the second equation follows from Lemma~\ref{lem:xcommute}, and
the third equation follows because $F_{\leq i}=F_i\circ F_{\leq i-1}$, and because the superdegree is additive under
composition.
\end{proof}

To show that odd Khovanov homology extends to a functor on
$\mathcal{C}\mathit{ob}^4_\Lambda$, we must assign
a map
\[
OKh(F,c)\colon OKh(L_0)\longrightarrow OKh(L_n)
\]
to each  $c\in\Lambda^*H_1(\Sigma(F\cup\operatorname{id}_U);\ZZ)$
where $F=F_n\circ\cdots\circ F_1$ is as above.
Note that each of the maps
\[
OKh(F_i)\colon OKh(L_{i-1})\longrightarrow OKh(L_i).
\]
is well-defined up to an overall sign.
To start the construction of $OKh(F,c)$, we fix a sign for each of these maps,
where this sign choice is independent of $c$.
Using Lemma~\ref{lem:ColFH}, we can identify $\Lambda^*H_1(\Sigma(F\cup\operatorname{id}_U);\ZZ)$ with $\Lambda^*\col(F)$,
which is isomorphic to 
\begin{equation}\label{eqn:tensorCol}
\Lambda^*\Bigl[\bigl(\text{Col}(L_n)\oplus\cdots\oplus\text{Col}(L_0)\bigr)\big/\!\!\sim\Bigr]
\,\,\,=\,\,\,
\Bigl[\bigl(\Lambda^*\text{Col}(L_n)\bigr)\otimes\cdots\otimes\bigl(\Lambda^*\text{Col}(L_0)\bigr)\Bigr]\bigg/\!\!\sim
\end{equation}
where $\sim$ identifies corresponding generators in $\text{Col}(L_{i-1})$ and $\text{Col}(L_i)$.
In particular, this shows that $\Lambda^*\col(F)$ is linearly generated by elements $c\in\Lambda^*\col(F)$ of the form
\begin{equation}\label{eqn:elementc}
c=e_{I_n}\otimes\cdots\otimes e_{I_1}
\end{equation}
where $I_i$ denotes a collection of overstrands $e_{i,1},\ldots,e_{i,\ell_i}$ of the link diagram representing $L_i$,
and $e_{I_i}\in\Lambda^*\col(L_i)$ denotes the wedge product
$e_{I_i}:=e_{i,1}\wedge\cdots\wedge e_{i,\ell_i}$. 
By abuse of notation, we will also write $e_{i,j}$ for any edge that belongs to the overstrand $e_{i,j}$.
For $c$ as in \eqref{eqn:elementc}, we now define the map $OKh(F,c)$ as the composition
\begin{equation}\label{eqn:OKhFc}
z_{I_n}\circ OKh(F_n)\circ z_{I_{n-1}}\circ OKh(F_{n-1})\circ z_{I_{n-2}}\circ\cdots\circ z_{I_1}\circ OKh(F_0)\circ z_{I_0},
\end{equation}
where $z_{I_i}:=z_{e_{i,1}}\circ\cdots\circ z_{e_{i,\ell_i}}$ and where we are assuming type Y sign assignments.
Note that this definition is compatible with the equivalence relation $\sim$ in \eqref{eqn:tensorCol} because
of Lemma~\ref{lem:zcommute}.
We complete the construction of $OKh(F,c)$ by extending the above definition linearly to arbitrary elements
of $\Lambda^*\col(F)$.

Before proving that the assignment $(F,c)\mapsto OKh(F,c)$ is compatible with the composition in $\mathcal{C}\mathit{ob}^4_\Lambda$,
we will deduce the following naturality result for the module structure on odd Khovanov homology.

\begin{lemma}\label{lem:intertwine} Suppose $c_i$ for $i\in\{0,n\}$ are elements of $\Lambda^*H_1(\Sigma(L_i\cup U);\ZZ)$
that correspond to the same element $c\in\Lambda^*H_1(\Sigma(F\cup\operatorname{id}_U);\ZZ)$.
Then the module actions of $c_0$ and $c_n$ are related by
\begin{equation}\label{eqn:intertwine}
OKh(F)\circ c_0=(-1)^{|c||F|}c_n\circ OKh(F)
\end{equation}
where $|c|$ denotes the wedge power of $c$, and $|F|$ denotes the superdegree of $F$.
\end{lemma}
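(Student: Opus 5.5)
The plan is to reduce the lemma to the case where $c$ is a single generator and then propagate that generator step by step through the movie using Lemma~\ref{lem:zcommute}. Both sides of \eqref{eqn:intertwine} are linear in $c$, and $c\mapsto c_0$, $c\mapsto c_n$ are algebra maps compatible with the identifications from Lemma~\ref{lem:ColFH}. It therefore suffices to treat $c$ a wedge product of elements of $H_1(\Sigma(F\cup\operatorname{id}_U);\ZZ)$ that lie in the images of both $H_1(\Sigma(L_0\cup U))$ and $H_1(\Sigma(L_n\cup U))$. Since the module actions are multiplicative, \eqref{eqn:intertwine} for a product follows from the single-factor case: the signs $(-1)^{|F|}$ multiply to $(-1)^{|c||F|}$. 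So I will prove that if $c_0\in\col(L_0)$ and $c_n\in\col(L_n)$ have the same image in $\col(F)$, then $OKh(F)\circ c_0=(-1)^{|F|}c_n\circ OKh(F)$.

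For the single-generator case, I first rewrite the actions in terms of the maps $z_e$ from \eqref{eqn:zedef}. By definition, $z_e=x_e$ on $OKh(L_0)$, and $z_e=(-1)^{|F|}x_e$ on $OKh(L_n)$. Extending $z$ linearly to $\col(L_i)$, which is well-defined by \eqref{eqn:dotmaprelation}, the claim becomes
\[
OKh(F)\circ z_{c_0}=z_{c_n}\circ OKh(F).
\]
Since $OKh(F)=OKh(F_n)\circ\cdots\circ OKh(F_1)$ up to sign, it suffices to show a pass-through statement: for each $i$ and each element $c_{i-1}\in\col(L_{i-1})$ corresponding to $c_i\in\col(L_i)$, we have $OKh(F_i)\circ z_{c_{i-1}}=z_{c_i}\circ OKh(F_i)$.

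The main obstacle is that equality in $\col(F)$ arises only through chains of identifications. Two elements $c_0$ and $c_n$ with equal image in $\col(F)$ need not be related by direct correspondence of overstrands frame by frame. Instead, the relation $\sim$ in Definition~\ref{def:ColF} identifies them through intermediate frames, possibly going back and forth. I would handle this by a structural argument. Consider the subgroup $G$ of $\bigoplus_i\col(L_i)$ generated by the relations $\mathcal{R}$, namely elements $e-e'$ with $e,e'$ corresponding overstrands in consecutive frames. Let $\Phi$ be the map sending an element $\sum_i g_i$ to $\sum_i OKh(F_{>i})\circ z_{g_i}\circ OKh(F_{\le i})$, where $F_{>i}=F_n\circ\cdots\circ F_{i+1}$; this map is linear. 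Lemma~\ref{lem:zcommute}, applied to a point $p$ lying on both $e$ and $e'$, shows that $\Phi$ vanishes on each generator of $G$, and hence on all of $G$. Now $c_0-c_n\in G$, so $\Phi(c_0)=\Phi(c_n)$, which is exactly the desired equality.

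One point needs care here. Corresponding overstrands do not always share a common point, for instance near a Reidemeister move. In that case I would use that the relations $\mathcal{R}$ are generated by pairs of corresponding edges sharing a point $p$ away from the support of the elementary move. Overstrands are identified via any of their edges, using Lemma~\ref{lem:typeYslide} to move dots along an overstrand. Finally, the overall sign ambiguity of $OKh(F)$ is harmless, since the same fixed sign appears on both sides of the equation.
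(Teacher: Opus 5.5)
Your degree-one argument is correct: $\Phi$ is well defined on $\bigoplus_i\col(L_i)$ by \eqref{eqn:dotmaprelation}, and Lemma~\ref{lem:zcommute} kills the relations $e-e'$. Hence $OKh(F)\circ z_{c_0}=z_{c_n}\circ OKh(F)$ whenever $c_0\in\col(L_0)$ and $c_n\in\col(L_n)$ have the same image in $\col(F)$.

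The gap is the reduction in your first paragraph. The natural maps go from $\Lambda^*\col(L_0)$ and $\Lambda^*\col(L_n)$ to $\Lambda^*\col(F)$, and they are generally not injective. So $c$ does not determine $c_0$ and $c_n$; your phrase ``$c\mapsto c_0$, $c\mapsto c_n$ are algebra maps'' has the arrows backwards. Multiplicativity only proves the lemma for sums of pairs $(a_1\wedge\cdots\wedge a_k,\,b_1\wedge\cdots\wedge b_k)$ in which $a_j$ and $b_j$ have equal images in $\col(F)$. The hypothesis does not place $(c_0,c_n)$ in that span. Even if $c$ is a product of common elements, the given $c_0$ only has the same image as the product of your chosen lifts. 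The claim that $OKh(F)\circ c_0$ depends only on the image of $c_0$ in $\Lambda^*\col(F)$ is exactly the higher-degree statement you have not proved.

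Concretely, let $L_0=u_1\cup u_2$ be a crossingless two-component unlink and $F\colon L_0\to\emptyset$ a merge followed by a death. Then $u_1=u_2$ in $\col(F)\cong\ZZ$, so $(c_0,c_n)=(u_1\wedge u_2,0)$ satisfies the hypothesis with $c=0$. But $\col(\emptyset)=0$, so a product pair would need $a_j\in\ker(\col(L_0)\to\col(F))=\ZZ(u_1-u_2)$, whose second exterior power vanishes. Here one can still rescue the case via $u_1\wedge u_2=-(u_2-u_1)\wedge u_1$. In general, though, with torsion present the kernel of $\Lambda^k$ of a map need not be generated by degree-one kernel elements; e.g.\ $\Lambda^2$ of the injection $(\ZZ/3)^2\xrightarrow{\,3\,}(\ZZ/9)^2$ is zero.

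The repair is to run your $\Phi$ argument in all wedge degrees, which is what the paper's appeal to the well-definedness of $OKh(F,c)$ amounts to.
\begin{enumerate}
\item Define $\Phi$ on $\Lambda^*\bigl(\bigoplus_i\col(L_i)\bigr)$ by writing elements as combinations of monomials ordered by frame (latest frame on the left) and sending each monomial to the composite \eqref{eqn:OKhFc}. This is well defined because each $\Lambda^*\col(L_i)$ acts via the $z_e$.
\item Show $\Phi$ vanishes on the two-sided ideal generated by the relations $e-e'$. In a term $u\wedge(e-e')\wedge v$, reorder so that $e-e'$ sits at the junction of the frame-$i$ and frame-$(i-1)$ blocks; the reordering sign is the same for $e$ and $e'$, since both have degree one. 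Then apply Lemma~\ref{lem:zcommute}.
\item The kernel of $\Lambda^*\bigl(\bigoplus_i\col(L_i)\bigr)\to\Lambda^*\col(F)$ is exactly this ideal, so $\Phi$ descends to $\Lambda^*\col(F)$.
\end{enumerate}
The lemma then follows for arbitrary $c_0,c_n$ exactly as in the paper:
\[
(-1)^{|c||F|}c_n\circ OKh(F)=z_{c_n}\circ OKh(F)=OKh(F,c)=OKh(F)\circ z_{c_0}=OKh(F)\circ c_0,
\]
with no need to factor $c$ into degree-one pieces.
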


\begin{proof}
For simplicity, assume that $c_0$ corresponds to an element $e_{I_0}\in\Lambda^{|c|}\col(L_0)$
and $c_n$ corresponds to an element $e_{I_n}\in\Lambda^{|c|}\col(L_n)$, where notations
are as above. Then $c_0$ acts on $OKh(L_0)$ by $x_{I_0}:=x_{e_{0,1}}\circ\cdots\circ x_{e_{0,|c|}}$,
and $c_n$ acts on $OKh(L_n)$ by $x_{I_n}:=x_{e_{n,1}}\circ\cdots\circ x_{e_{n,|c|}}$.
By the definition of $z_e$, we have $z_{I_0}=x_{I_0}$ and $z_{I_n}=(-1)^{|c||F|}x_{I_n}$, and hence
\begin{align*}
(-1)^{|c||F|}x_{I_n}\circ OKh(F) &=z_{I_n}\circ OKh(F)\\
&=OKh(F,c)\\
&= OKh(F)\circ z_{I_0}\\
&=OKh(F)\circ x_{I_0},
\end{align*}
where the second and the third equality follow from the well-definedness of $OKh(F,c)$,
and from the assumption that $c_0$ and $c_n$ both correspond to $c$.
\end{proof}

\begin{lemma}\label{lem:composition}
The assignment $(F,c)\mapsto OKh(F,c)$ is compatible with the composition in $\mathcal{C}\mathit{ob}^4_\Lambda$.
\end{lemma}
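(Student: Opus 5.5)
We need to show $OKh((F,c)\circ(F',c'))=\pm OKh(F,c)\circ OKh(F',c')$, with one overall sign that depends only on $F$ and $F'$ and not on $c,c'$. Here $(F,c)\circ(F',c')=(F\circ F',(-1)^{|c||F'|}c\wedge c')$. The plan is to reduce to generators, use a movie for $F\circ F'$ obtained by concatenating movies for $F'$ and $F$, and compare the $z$-signs. Both sides are bilinear in $(c,c')$, so it suffices to take $c$ and $c'$ of the form \eqref{eqn:elementc}.

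First we set up the movies and the elements. Let $F'$ have movie $L_0,\ldots,L_m$ and $F$ have movie $L_m,\ldots,L_{m+k}$, so the concatenation is a movie for $F\circ F'$. Write $c'=e_{I_m}\otimes\cdots\otimes e_{I_0}$ and $c=e_{J_{m+k}}\otimes\cdots\otimes e_{J_m}$. Under the Mayer--Vietoris identification in the proof of Lemma~\ref{lem:ColFH}, the coloring module of $F\circ F'$ is the quotient of $\col(F)\oplus\col(F')$ that glues along $\col(L_m)$. Hence $c\wedge c'$ corresponds to the tensor in \eqref{eqn:tensorCol} whose slot $i\le m$ is $e_{I_i}$ and whose slot $i\ge m$ is $e_{J_i}$. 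The two entries at the shared frame $L_m$ are wedged together, and no reordering sign arises, because \eqref{eqn:tensorCol} lists the higher frames to the left.

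Next we compare the $z$-signs, which is the key step. In the definition \eqref{eqn:OKhFc} for $F\circ F'$, each dot on frame $i\le m$ carries the sign $(-1)^{|F'_{\le i}|}$, exactly as in $OKh(F',c')$. For frames $i\ge m$ the relevant superdegree is $|F'|+|F_{\le i}|$, where $F_{\le i}$ denotes the part of $F$ between $L_m$ and $L_i$, by additivity of superdegree. Every dot map coming from $c$ therefore picks up an extra factor $(-1)^{|F'|}$, and there are $|c|$ such dots. The composite thus carries the extra sign $(-1)^{|c||F'|}$. This cancels the sign built into the composition law \eqref{eqn:composition}, so
\[
OKh\bigl(F\circ F',(-1)^{|c||F'|}c\wedge c'\bigr)=OKh(F,c)\circ OKh(F',c').
\]
This holds once the sign choices for the elementary pieces $OKh(F_i)$ of the concatenated movie are taken to be those already fixed for $F$ and $F'$. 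At the shared frame, $z_{J_m}\circ z_{I_m}$ is exactly the wedge $e_{J_m}\wedge e_{I_m}$ acting via Proposition~\ref{prop:5:colMod}. With type Y sign assignments, all $z$-maps on a frame descend to the $\Lambda^*\col(L_i)$ action, so the ordering is consistent.

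Finally, we check independence of choices, which is the step I expect to be the main obstacle. One must confirm that $OKh(F,c)$ is independent, up to one overall sign uniform in $c$, of the chosen movie and the sign choices for the $OKh(F_i)$. Otherwise the equality above only holds for compatible movies. The sign choices change $OKh(F,c)$ by a global $\pm1$ independent of $c$, which is harmless. For the choice of movie, any two movies for $F$ differ by movie moves. Lemma~\ref{lem:zcommute} together with Lemma~\ref{lem:intertwine} lets us push all dots to the bottom frame, giving $OKh(F,c)=\pm OKh(F)\circ c_0$ whenever $c$ comes from $\col(L_0)$. For general $c$, dots must be slid through each elementary piece, and this uses exactly the relations identifying $\col(F_i)$ with $H_1(\Sigma(F_i\cup\operatorname{id}_U))$ from Lemma~\ref{lem:ColFiH}. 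Given that, composition compatibility follows from the computation above.
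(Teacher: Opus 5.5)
Your argument is correct and is essentially the paper's proof: you concatenate the movies using the already-fixed elementary signs and note that each of the $|c|$ dot maps coming from $c$ acquires an extra factor $(-1)^{|F'|}$, since $|F_{\le i}\circ F'|=|F_{\le i}|+|F'|$, which cancels the sign in \eqref{eqn:composition}. Your final paragraph is not needed for this lemma: the paper proves it for fixed movies and sign choices, and treats independence of the movie separately in Lemma~\ref{lem:OKhFcinvariance}. That separate step relies on Lemma~\ref{lem:CS}, and your dot-pushing sketch would not suffice for it on its own, because a general $c$ need not come from the boundary.
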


\begin{proof} Let $(F,c)$ and $(F',c')$ be as in \eqref{eqn:composition}, and assume that
$F$ and $F'$ are presented by movies 
$L_0,\ldots,L_n$ and $L'_0,\ldots,L'_{n'}$, respectively.
Suppose also that we have fixed signs for all of the elementary maps $OKh(F_i)$ and $OKh(F'_j)$
that come from these movie presentations.
If $c$ and $c'$ correspond to tensor products as in \eqref{eqn:elementc},
then $OKh(F,c)$ and $OKh(F',c')$ have the form
\[
OKh(F,c)=z_{I_n}\circ OKh(F_n)\circ z_{I_{n-1}}\circ\cdots\circ z_{I_1}\circ OKh(F_0)\circ z_{I_0}
\]
and
\[
OKh(F',c')=z_{I'_{n'}}\circ OKh(F'_{n'})\circ z_{I'_{n'-1}}\circ\cdots\circ z'_{I_1}\circ OKh(F'_0)\circ z_{I'_0}.
\]
We can obtain a movie presentation $L'_0,\ldots,L'_{n'-1},L_0,\ldots,L_n$
for the composed link cobordism $F\circ F'$ by concatenating the movie
presentations for $F$ and $F'$ (where have used that $L'_{n'}=L_0$).
If $c$ is en element of $\Lambda^{|c|}H_1(\Sigma(F\cup\operatorname{id}_U);\ZZ)$, it thus follows that
\begin{multline*}
OKh(F\circ F',c\wedge c')=(-1)^{|c||F'|}z_{I_n}\circ OKh(F_n)\circ z_{I_{n-1}}\circ\cdots\circ OKh(F_0)\circ z_{I_0}\\
\circ z_{I'_{n'}}\circ OKh(F'_{n'})\circ z_{I'_{n'-1}}\circ\cdots\circ OKh(F'_0)\circ z_{I'_0},
\end{multline*}
where the sign in this formula arises because
of the sign in the definition of $z_e$.
In more detail, the map $z_{I_i}$ that appears in the definition of $OKh(F,c)$
is given by $z_{I_i}=z_{e_{i,1}}\circ\cdots\circ z_{e_{i,\ell_i}}$ where
$z_{e_{i,j}}=(-1)^{|F_{\leq i}|}x_{e_{i,j}}$.
To obtain the corresponding map that appears in the definition of $OKh(F\circ F',c\wedge c')$,
we must replace each $z_{e_{i,j}}$ by
\[
(-1)^{|F_{\leq i}\circ F'|}x_{e_{i,j}}=(-1)^{|F_{\leq i}|+|F'|}x_{e_{i,j}}=(-1)^{|F'|}z_{e_{i,j}}.
\]
The sign in the previous formula now arises because there are $|c|$ such maps $z_{e_{i,j}}$ when
all of the $z_{I_i}$ are considered.
In conclusion, the previous formula shows that
\[
OKh(F\circ F',c\wedge c')=(-1)^{|c||F'|}OKh(F,c)\circ OKh(F',c'),
\]
which is consistent with formula \eqref{eqn:composition} for the composition of
morphisms in $\mathcal{C}\mathit{ob}^4_\Lambda$.
\end{proof}

\subsection{Isotopy Invariance of the Odd Khovanov Maps}\label{subs:isotopyinvariance}

To complete the proof of Theorem~\ref{thm:functor}, we must show:

\begin{lemma}\label{lem:OKhFcinvariance} The definition of $OKh(F,c)$ is compatible with relation \eqref{eqn:equiv}. That is,
\[OKh(F,c)=\pm OKh(\phi(F),\widetilde{\phi}_*(c))\]
for all $\phi\in D(S^3\times I)$.
\end{lemma}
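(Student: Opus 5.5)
Since $\phi\in D(S^3\times I)$ is isotopic to the identity rel $((S^3\setminus B)\times I)\cup V_\phi$, the cobordisms $F$ and $\phi(F)$ are related by a smooth ambient isotopy in $\mathbb{R}^3\times I$ fixing the ends. By standard results (Carter--Saito, Roseman), their movie presentations are then related by a finite sequence of movie moves and interchanges of distant critical levels. Every homotopy class of such isotopies can be realized this way, so $\widetilde{\phi}_*$ is the map obtained by transporting homology classes along the sequence of movies. It therefore suffices to prove the following local statement. Suppose two movies $M$ and $M'$ differ by a single movie move supported in a small $4$-ball $B'\times J$, and $c$ and $c'$ correspond under the induced identification of coloring modules. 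Then $OKh(M,c)=\pm OKh(M',c')$.

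The first step is to reduce to decorations supported away from the move. By the tensor description \eqref{eqn:tensorCol}, $\Lambda^*\col(F)$ is generated by products of overstrands, and each overstrand can be slid along its oversheet (Lemma~\ref{lem:zcommute}) to a level outside the interval $J$. If an oversheet meets only frames inside $J$, I would use the coloring relations and Lemma~\ref{lem:ColFiH} to rewrite it. The point is that $H_1$ of the branched cover of the move region is generated by classes coming from its boundary. This mirrors the Mayer--Vietoris argument in that lemma, where gluing in a $4$-ball $P$ added no generators. Once every dot lives at levels outside $J$, the map $OKh(M,c)$ has the form $A\circ OKh(M_J)\circ A'$. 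Here $A$ and $A'$ are the same compositions of $z$-maps and elementary maps for $M$ and for $M'$, and the dots are assigned to identical edges in both. The known invariance $OKh(M_J)=\pm OKh(M'_J)$ from \cite{MW2024,Spy2025} then gives the claim.

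Two sign issues remain. First, $z_e=(-1)^{|F_{\le i}|}x_e$ depends on superdegrees of initial segments. The superdegree is an isotopy invariant determined by $\chi$ and the boundary, so it agrees for corresponding frames of $M$ and $M'$ outside $J$. Second, an interchange of distant critical points changes no superdegree outside the swap. The swapped levels can be handled by the same device of moving all dots away from them.

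The main obstacle is checking that $\widetilde{\phi}_*$ really agrees with the combinatorial identification of $\col$ along the chosen sequence of moves. A related difficulty arises for moves involving Reidemeister II and III, or triple points, where overstrands inside $J$ change. There a class represented inside $J$ must be expressed via overstrands outside $J$, and a relation like $x_c\simeq 2x_b-x_a$ from \eqref{eqn:dotmaprelation} is used. I would first try to handle this uniformly. Lemma~\ref{lem:intertwine} applied to the sub-movie $M_J$ shows that dot actions at the bottom and top of $J$ match up to the sign $(-1)^{|c||M_J|}$, so any decoration inside $J$ can be pushed out.
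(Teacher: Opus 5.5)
Your overall route is the paper's. You reduce to a single Carter--Saito move or a single interchange of distant levels, and rewrite every decoration as a combination of decorations supported outside the move slab. You then factor $OKh(F,c)$ as a decorated part above, times $OKh(F_2)$, times a decorated part below, and invoke $OKh(F_2)=\pm OKh(F_2')$ with a sign independent of $c$. Your handling of the $z_e$-signs via $|F_2|=|F_2'|$ is fine.

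\textbf{The gap.} The step that carries all the content is only asserted: that $H_1(\Sigma(F\cup\operatorname{id}_U);\mathbb{Z})$ is generated by classes coming from outside the slab (this is Lemma~\ref{lem:CS}). Neither justification you give works.
\begin{enumerate}
\item Applying Lemma~\ref{lem:ColFiH} step by step only shows that each elementary $\col(F_i)$ is generated by its two end frames. This does not propagate to a composite movie. For instance, a birth followed by the death of a small split circle produces an oversheet meeting only interior frames. Its class spans a free $\mathbb{Z}$-summand of $H_1$ that does not come from the boundary, so no coloring relation can rewrite it.
\item The branched cover of a multi-sheet move region is not a $4$-ball. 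So ``gluing in $P$ adds no generators'' needs its own argument.
\item Your fallback through Lemma~\ref{lem:intertwine} is circular. That lemma assumes $c$ already corresponds to boundary classes $c_0$ and $c_n$, which is exactly what is in question for a class represented only inside $J$.
\end{enumerate}

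\textbf{The missing input} is a property of the moves themselves. Each side of a Carter--Saito move consists of Reidemeister moves and at most one birth, death, or saddle. Hence its branched double cover is $\Sigma(L_0)\times I$ with at most one handle attached, which forces $H_1(\partial\Sigma(F_2))\to H_1(\Sigma(F_2))$ to be surjective. For a distant interchange, the middle frame agrees pointwise with one of the two outer frames, so $\col(F)$ is generated by $\col(L_0)\oplus\col(L_2)$.

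With this in hand, the ``main obstacle'' you flag disappears. One may take $\phi$ to be the identity outside the slab. Its based lift is then the identity over $F_1$ and $F_3$, so $\widetilde{\phi}_*(c_3\wedge c_1)=c_3\wedge c_1$. Since such elements generate, $\widetilde{\phi}_*$ is exactly the transport of classes you assumed in your local statement.
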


We will first show the following preliminary result:

\begin{lemma}\label{lem:CS} Let $F\colon L_0\rightarrow L_1$ be a link cobordism that corresponds to one of the two sides
of a Carter-Saito movie move. Then the map $H_1(\partial\Sigma(F);\ZZ)\rightarrow H_1(\Sigma(F);\ZZ)$
is surjective.
\end{lemma}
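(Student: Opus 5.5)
We will translate the statement into one about coloring modules, using the machinery of Lemmas~\ref{lem:ColFH} and~\ref{lem:ColFiH}. Two remarks first. The relevant homology is probably that of $\Sigma(F\cup\operatorname{id}_U)$; since $\Sigma(F\cup \operatorname{id}_U)\cong\Sigma(F)\natural(S^1\times S^2\times I)$ up to the summand from Remark~\ref{rem:HSigmaFone}, surjectivity for one version is equivalent to surjectivity for the other, because the extra $\ZZ$ summand comes from the boundary. The boundary $\partial\Sigma(F)$ contains $\Sigma(L_0)\sqcup\Sigma(L_1)$, together with the preimage of the vertical part at infinity, which contributes nothing new to $H_1$. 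So it suffices to show that $H_1(\Sigma(L_0))\oplus H_1(\Sigma(L_1))\to H_1(\Sigma(F))$ is surjective.

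By Lemma~\ref{lem:ColFH}, together with its reduced analogue (obtained by the same Mayer--Vietoris argument, or by passing to $F\cup\operatorname{id}_U$), we have
\[
H_1(\Sigma(F\cup\operatorname{id}_U))=\col(F)=\Bigl(\bigoplus_{i}\col(L_i)\Bigr)\Big/\mathcal{R}
\]
for any movie $L_0,\ldots,L_n$ of $F$. The map from $H_1$ of the boundary corresponds to the inclusion of $\col(L_0)\oplus\col(L_n)$. It therefore suffices to show that $\col(F)$ is generated by the overstrands of the initial frame and the final frame, or equivalently, in the language of $\col(S)$, that every oversheet of the broken surface diagram is expressible in $\col(S)$ through oversheets meeting $L_0$ or $L_1$.

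The proof then proceeds by inspecting the fifteen Carter--Saito movie moves, each side separately. The key observation is that each side is a short movie in which every elementary step is one of the following. A Reidemeister move or planar isotopy induces an isomorphism $\col(L_{i-1})\cong\col(L_i)$, so the generators propagate in both directions. A saddle introduces no new overstrands beyond those adjacent to the existing ones. A birth of a trivial circle, which is the only step creating a genuinely new generator, is in every Carter--Saito move paired with a later death or saddle within the same movie side, and that later step absorbs the new generator into a strand already expressible from $L_0$ or $L_1$. Concretely, I would propagate generators forward from $L_0$ and backward from $L_n$: an overstrand of $L_i$ lies in the span if it lies in the image of $\col(L_{i-1})$ or of $\col(L_{i+1})$ under the identifications in $\mathcal{R}$, using the crossing relations $c=2b-a$ where needed.

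The main obstacle is the moves involving births followed by Reidemeister moves in which the newborn circle passes under other strands. Examples are moves where a small circle is born, undergoes a type II move with another strand, and then dies or is merged. Here the newborn circle may split into several overstrands, and the relation $c=2b-a$ only expresses an undercrossing strand as twice the overcrossing strand minus another, so each new overstrand has to be tracked through the relations. For these moves I would first check that the newborn component eventually dies or merges with a strand coming from $L_0$ or $L_1$, and then run the propagation backward from that death or merge. If a case resists this, the fallback is topological: both sides are ambient-isotopic rel boundary to cobordisms whose branched double cover is $\partial$-collar plus $1$- and $2$-handles attached along curves already present in the boundary.
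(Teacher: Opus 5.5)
Your reduction is sound: via Lemma~\ref{lem:ColFH} and Remark~\ref{rem:HSigmaFone}, surjectivity amounts to $\col(F)$ being generated by the images of $\col(L_0)$ and $\col(L_n)$. The gap is in the \emph{key observation} that is supposed to drive the case check.

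First, the description of the moves is wrong. Each side of a Carter--Saito move consists of Reidemeister moves together with \emph{at most one} birth, death, or saddle. So a circle born on a side is never killed or merged on that side; it survives to the final frame.

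Second, the absorption mechanism you propose is false: a later death cannot absorb a newborn generator. For instance, a birth followed by the death of the same circle produces a split unknotted $2$-sphere. Then $\Sigma(F)\cong(\Sigma(L_0)\times I)\#(S^1\times S^3)$, and the class $u$ of the newborn circle is a free summand of $\col(F)$ that does not come from $\col(L_0)\oplus\col(L_n)$. As written, your argument would ``verify'' surjectivity in exactly the configuration where it fails. Your obstacle paragraph (birth, R2, then death or merge) addresses a case that never occurs.

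The missing idea is the paper's observation that each side has at most one Morse step. With it, your propagation closes immediately and no move-by-move inspection is needed:
\begin{itemize}
\item For a Reidemeister step, the proof of Lemma~\ref{lem:ColFiH} gives identifications $\col(L_{i-1})=\col(F_i)=\col(L_i)$.
\item For the unique Morse step $F_k$, the group $\col(F_k)$ is generated by $\col(L_{k-1})$ for a death, by $\col(L_k)$ for a birth (which adds one free generator for the new circle), and by either for a saddle (which only identifies two overstrands).
\end{itemize}
Hence $\col(F)$ is generated by the image of $\col(L_0)$ or of $\col(L_n)$.

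The paper makes the same point topologically: $\Sigma(F)$ is $\Sigma(L_0)\times I$ with at most one $1$-, $2$-, or $3$-handle attached, and surjectivity is read off from the effect of a single handle on $H_1$. Your coloring-module version is the combinatorial shadow of that argument. It stays inside the paper's diagrammatic framework and makes the role of each elementary step explicit, but it must rest on the single-critical-point fact, not on the pairing-and-absorption claim.
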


\begin{proof} Every movie that shows up in a Carter-Saito movie move is either given by a sequence of Reidemeister
moves, or by a sequence of Reidemeister moves together with a single birth, death, or saddle move.
While Reidemeister moves preserve the isotopy class of a link in $\mathbb{R}^3$, Morse critical points on a link cobordism
correspond to handle attachments in the branched double-cover (see, e.g., \cite[Theorem~7.1]{OS2022}). 
For $F\colon L_0\rightarrow L_1$ is as in a lemma, the branched double-cover, $\Sigma(F)$, is
therefore abstractly homeomorphic to one of the following:
\begin{itemize}
\item
$\Sigma(L_0)\times I$,
\item
$\Sigma(L_0)\times I$ with a single handle attached to $\Sigma(L_0)\times\{1\}$,
\end{itemize}
If $\Sigma(F)$ is homeomorphic to $\Sigma(L_0)\times I$, then the statement of the lemma is obvious,
and in the second case, it follows because of how handle attachments affect the first homology.
\end{proof}

\begin{proof}[Proof of Lemma~\ref{lem:OKhFcinvariance}]
Let $\phi\in D(S^3\times I)$ be a diffeomorphism as in \eqref{eqn:equiv},
and let $\phi_t$ be an isotopy between $\phi_0=\operatorname{id}$
and $\phi_1=\phi$, such that $\phi_t$ fixes the points of a neighborhood $((S^3\setminus B)\times I)\cup V_\phi$,
as in the definition of $D(S^3\times I)$.
Let $\phi'_t\in D(S^3\times I)$ be another isotopy
such that $\phi'_t=\operatorname{id}$, and
Assume further that $\phi'_t$ satisfies $\phi'_t(F)=\phi_t(F)$ for all $t$.
Moreover, let $\phi':=\phi'_1$.
Then $\phi^{-1}\circ\phi'$ lifts
to a diffeomorphism of $\Sigma(F\cup\operatorname{id}_U)$ which is
isotopic to the identity map, and thus acts by the identity on
$H_1(\Sigma(F\cup\operatorname{id}_U);\ZZ)$. In particular, this implies $\widetilde{\phi}'_*=\widetilde{\phi}_*$.

When computing the map $\widetilde{\phi}_*$ that appears in Lemma~\ref{lem:OKhFcinvariance},
we are therefore allowed to replace $\phi$ by any $\phi'$ as above.
Concretely, this means that the relevant information about $\phi$ is entirely contained
in the family $\phi_t(F)$.
We can therefore assume that $\phi$ is the final map in an isotopy $\phi_t\in D(S^3\times I)$
that starts with the identity map, and that is induced by a sequence of the following moves on a movie presentation
for $F$:
\begin{itemize}
\item Carter-Saito movie moves,
\item moves that time-reorder two spatially distant consecutive events in a movie.
\end{itemize}
To prove Lemma~\ref{lem:OKhFcinvariance}, it is
now sufficient to prove it in the special case where $\phi_t$ is induced by just one of these moves.

Assume first that $\phi_t$ is induced by a Carter-Saito movie move.
Specifically, assume that $\phi$ takes 
$F=F_3\circ F_2\circ F_1$ to $F'=F_3\circ F_2'\circ F_1$,
where $F_1$ and $F_3$ are arbitrary, and
$F_2$ and $F_2'$
correspond
to the two sides of this movie move.

Then Lemma~\ref{lem:CS} implies that
each element $c\in\Lambda^*H_1(\Sigma(F\cup\operatorname{id}_U))$ can be expressed as a linear combination
of elements of the form $c_3\wedge c_1$ where $c_i\in\Lambda^*H_1(\Sigma(F_i\cup\operatorname{id}_U))$
for $i=1,3$. Suppose now that we have fixed movie presentations for the link cobordisms $F_1,F_2,F'_2,F_3$,
together with signs for the maps induced by the elementary cobordisms that appear in these movies. Then
\begin{equation}\label{eqn:OKhFc3c1}
OKh(F,c_3\wedge c_1)=(-1)^{|c_3||F_2\circ F_1|}OKh(F_3,c_3)\circ OKh(F_2)\circ OKh(F_1,c_1),
\end{equation}
where $|c_3|$ denotes the wedge power of $c_3$.
Since $F_2$ and $F_2'$ are isotopic, the maps $OKh(F_2)$ and $OKh(F_2')$ agree up to sign,
and thus replacing $OKh(F_2)$ by $OKh(F_2')$ in \eqref{eqn:OKhFc3c1} will at most change the sign of $OKh(F,c_3\wedge c_1)$.
Moreover, this sign change is independent of the specific element $c_3\wedge c_1$, and thus the map $OKh(F,c)$ also changes by at most a sign
when $c$ is a linear combination of such elements.
Since
\[
\widetilde{\phi}_*\colon\Lambda^* H_1(\Sigma(F\cup\operatorname{id}_U);\ZZ)\longrightarrow
\Lambda^* H_1(\Sigma(F'\cup\operatorname{id}_U);\ZZ)
\]
is induced
by the identity map on $H_1(\Sigma(F_3\cup\operatorname{id}_U);\ZZ)\oplus H_1(\Sigma(F_1\cup\operatorname{id}_U);\ZZ)$,
we further have $\widetilde{\phi}_*(c_3\wedge c_1)=c_3\wedge c_1$, and hence $\widetilde{\phi}_*(c)=c$ for any $c$.
Since we also have $\phi(F)=F'$, the above discussion now implies
\[
OKh(\phi(F),\widetilde{\phi}_*(c))=OKh(F',c)=\pm OKh(F,c),
\]
as desired.

Now suppose $\phi_t$ is induced by time-reordering two spatially distant elementary events in a movie presentation.
More specifically, suppose $F$ is given by a movie with three frames $L_0,L_1,L_2$,
and suppose that 
the diagrams of $L_0$ and $L_1$ differ in a disk-region $R_1\subset\mathbb{R}^2$,
while the diagrams of $L_1$ and $L_2$ differ in a disjoint disk-region $R_2\subset\mathbb{R}^2$.
Let $\phi_t$ be an isotopy that time-reorders the nontrivial parts of $F$ that lie above $R_1$ and $R_2$.
Then the link cobordism $F':=\phi(F)$ for $\phi:=\phi_1$ is represented by the movie $L_0,L_1',L_2$, where $L_1'$ differs from $L_0$
in $R_2$ and from $L_2$ in $R_1$. To see that Lemma~\ref{lem:OKhFcinvariance} holds for $\phi=\phi_1$, 
we now note that the coloring module
\[
\col(F)=\bigr(\col(L_2)\oplus\col(L_1)\oplus\col(L_0)\bigl)\big/\mathcal{R}
\]
is generated by $\col(L_2)\oplus\col(L_0)$ because, at every point, $L_1$ agrees with at least one of $L_0$ and $L_2$.
This means that the map
$H_1(\partial\Sigma(F\cup\operatorname{id}_U);\ZZ)\rightarrow H_1(\Sigma(F\cup\operatorname{id}_U);\ZZ)$ is surjective,
and the same holds true if we replace $F$ by the isotopic link cobordism $F'$.
To complete the proof,
we can therefore argue as in the case where $\phi_t$ was an isotopy induced by a Carter-Saito movie move.
\end{proof}

Note that the sign in Lemma~\ref{lem:OKhFcinvariance} depends on the
chosen signs for the elementary cobordism maps that arise from
the movie presentations of $OKh(F)$ and $OKh(\phi(F))$.
However, we have:

\begin{lemma}\label{lem:signindependent} The sign in Lemma~\ref{lem:OKhFcinvariance} is independent of $c$.
\end{lemma}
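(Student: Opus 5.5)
The plan is to retrace the proof of Lemma~\ref{lem:OKhFcinvariance} and record exactly where the sign comes from. There we fixed signs for the elementary maps $OKh(F_i)$ in movie presentations of $F$ and of $\phi(F)$, independently of $c$. By construction, $OKh(F,c)$ is defined in \eqref{eqn:OKhFc} as a composition in which the only $c$-dependent factors are the dot maps $z_{I_i}$. The elementary cobordism maps enter with their fixed signs. The claim is therefore that the ratio $OKh(\phi(F),\widetilde{\phi}_*(c))/OKh(F,c)$ equals the product of the sign discrepancies between the elementary cobordism maps, and that this product does not involve $c$.

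As in the proof of Lemma~\ref{lem:OKhFcinvariance}, we reduce to the case where $\phi$ is induced by a single move: either a Carter--Saito movie move, or a time-reordering of two distant events. For a chain of moves, the overall sign is the product of the individual signs, so independence of $c$ is preserved.

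\textbf{Carter--Saito case.} Write $F=F_3\circ F_2\circ F_1$ and $\phi(F)=F_3\circ F_2'\circ F_1$. By Lemma~\ref{lem:CS}, every $c$ is a linear combination of elements $c_3\wedge c_1$. Formula \eqref{eqn:OKhFc3c1} holds for both $F$ and $\phi(F)$ with the same prefactor $(-1)^{|c_3||F_2\circ F_1|}$. This uses that $|F_2'|=|F_2|$, since superdegree is an isotopy invariant determined by $\chi$ and the boundary. The factors $OKh(F_3,c_3)$ and $OKh(F_1,c_1)$ are literally identical, since the same movies and the same signs are used. The only difference is $OKh(F_2)$ versus $OKh(F_2')$, and these agree up to a fixed sign $\varepsilon$. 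That sign depends only on the chosen signs for the elementary pieces of $F_2$ and $F_2'$, hence not on $c_1$ or $c_3$. By linearity, $OKh(\phi(F),\widetilde{\phi}_*(c))=\varepsilon\, OKh(F,c)$ for all $c$.

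\textbf{Reordering case.} The coloring module is generated by $\col(L_2)\oplus\col(L_0)$, so every $c$ is a combination of elements $c_2\wedge c_0$ with dots placed only on the top and bottom frames. Then
\[
OKh(F,c_2\wedge c_0)=\pm\, z_{I_2}\circ OKh(F_2)\circ OKh(F_1)\circ z_{I_0}.
\]
Here the sign, and the signs in $z_{I_2}$, come from $|F_{\leq 2}|=|F|$, which is the same for $F$ and $F'$. For $F'$ the same expression holds with $OKh(F_2)\circ OKh(F_1)$ replaced by $OKh(F_2')\circ OKh(F_1')$. These two composites agree up to a sign determined by the chosen elementary signs, namely the commutation or anticommutation of distant cobordism maps. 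That sign is again independent of $c$.

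The main obstacle is bookkeeping. We must check that the $z$-signs $(-1)^{|F_{\le i}|}$ attached to a given frame agree for $F$ and $\phi(F)$. This holds for frames outside the modified region, because superdegree is additive and isotopy invariant on the pieces $F_1$, $F_2$ versus $F_2'$. It is exactly why we insist on placing all decorations on unchanged frames, which Lemma~\ref{lem:CS} and the generation argument allow.
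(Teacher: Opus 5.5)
Your proposal is correct and follows the paper's argument. The paper's proof simply observes that, in the proof of Lemma~\ref{lem:OKhFcinvariance}, the only sign change comes from replacing $OKh(F_2)$ by $OKh(F_2')$ in \eqref{eqn:OKhFc3c1}, with the time-reordering case handled the same way, and that this change is independent of $c_3\wedge c_1$. Your write-up makes explicit the bookkeeping the paper leaves implicit: the prefactor $(-1)^{|c_3||F_2\circ F_1|}$ and the $z$-signs coincide for $F$ and $\phi(F)$ because superdegree is an isotopy invariant.
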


\begin{proof} This follows directly from the proof of Lemma~\ref{lem:OKhFcinvariance}
because the sign in equation \eqref{eqn:OKhFc3c1} is independent of $c_3\wedge c_1$.
\end{proof}

Using Lemma~\ref{lem:signindependent},
Lemma~\ref{lem:OKhFcinvariance} can thus be strengthened and reformulated as follows:

\begin{corollary}\label{cor:OKhFcinvariance}
For any given link cobordism $F\colon L\rightarrow L'$, the linear map
\[
OKh(F,-)\colon\Lambda^*H_1((F\cup\operatorname{id}_U);\ZZ)
\longrightarrow\operatorname{Hom}_{\ZZ}\bigl(OKh(L),OKh(L')\bigr)
\]
satisfies
$OKh(\phi(F),-)\circ\widetilde{\phi}_*=\pm OKh(F,-)$
for all $\phi\in D(S^3\times I)$.
\end{corollary}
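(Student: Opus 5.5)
The statement is a repackaging of Lemma~\ref{lem:OKhFcinvariance} together with Lemma~\ref{lem:signindependent}, so the plan is to reduce it to these two results using linearity in $c$. First we note that for a fixed link cobordism $F$ and fixed movie presentation, with fixed signs for the elementary maps $OKh(F_i)$, the assignment $c\mapsto OKh(F,c)$ from \eqref{eqn:OKhFc} is $\ZZ$-linear by construction. It was defined on the spanning elements \eqref{eqn:elementc} and extended linearly; well-definedness on the quotient \eqref{eqn:tensorCol} is Lemma~\ref{lem:zcommute}. Likewise $\widetilde{\phi}_*$ is an algebra isomorphism, hence linear. Both $OKh(\phi(F),-)\circ\widetilde{\phi}_*$ and $OKh(F,-)$ are therefore linear maps from $\Lambda^*H_1(\Sigma(F\cup\operatorname{id}_U);\ZZ)$ to $\operatorname{Hom}_\ZZ(OKh(L),OKh(L'))$. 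To show they agree up to a single global sign, it suffices to exhibit one sign $\varepsilon\in\{\pm1\}$ with $OKh(\phi(F),\widetilde{\phi}_*(c))=\varepsilon\, OKh(F,c)$ for all $c$.

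Next, as in the proof of Lemma~\ref{lem:OKhFcinvariance}, we replace $\phi$ by the endpoint of an isotopy $\phi_t\in D(S^3\times I)$ realized by a finite sequence of elementary moves: Carter--Saito movie moves and time-reorderings of distant events. This replacement does not change $\widetilde{\phi}_*$. Thus $\phi=\psi_k\circ\cdots\circ\psi_1$, with each $\psi_j$ elementary, and $\widetilde{\phi}_*$ is the composite of the $\widetilde{\psi_j}_*$ by uniqueness of based lifts. For each elementary step, the proof of Lemma~\ref{lem:OKhFcinvariance} produces a sign $\varepsilon_j$. By Lemma~\ref{lem:signindependent}, $\varepsilon_j$ does not depend on $c$.

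The key point to check is exactly that this independence really holds. In the movie-move case the sign comes only from replacing $OKh(F_2)$ by $OKh(F_2')$ in \eqref{eqn:OKhFc3c1}, with $F_1$, $F_3$ and their sign choices untouched. The factor $(-1)^{|c_3||F_2\circ F_1|}$ is unchanged because superdegree is an isotopy invariant, $|F_2|=|F_2'|$. In the reordering case, the two elementary maps are commuted, which produces a fixed sign independent of the dots. The dots can be pushed to the outer frames $L_0$ and $L_2$ because of the surjectivity argument, and the $z$-signs are unchanged because $|F_{\le i}|$ for the outer frames is the same.

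Composing the steps then gives $OKh(\phi(F),\widetilde{\phi}_*(c))=\varepsilon_k\cdots\varepsilon_1\,OKh(F,c)$ for every $c$, which is the claimed identity with the global sign $\pm=\prod_j\varepsilon_j$. The sign also absorbs the arbitrary choices of signs for the elementary maps in the movies for $F$ and $\phi(F)$, which again do not involve $c$.
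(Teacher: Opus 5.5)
Your proposal is correct and follows the paper's route: the paper obtains this corollary by combining Lemma~\ref{lem:OKhFcinvariance} with Lemma~\ref{lem:signindependent}, i.e.\ by reducing to single Carter--Saito movie moves and time-reorderings, each of which changes $OKh(F,c)$ by a sign independent of $c$.

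Your extra checks make explicit what the paper leaves implicit. These are linearity in $c$, the equality $|F_2|=|F_2'|$ (so the prefactor in \eqref{eqn:OKhFc3c1} is unchanged), the unchanged $z$-signs on the outer frames in the reordering case, and taking the product of the signs over all elementary steps.
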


\begin{remark}
Assuming that we restrict ourselves to diffeomorphisms $\phi\in D(S^3\times I)$ with $\phi(F)=F'$ for a fixed $F'$,
it is reasonable to ask whether the sign in this corollary depends on $\phi$.
It turns out that it does not depend on $\phi$ if $2OKh(F)\neq 0$.
Indeed, in this case, the map $OKh(F)$ has a well-defined sign, and the sign that appears
in the corollary is just the sign that occurs in the equation $OKh(\phi(F))=\pm OKh(F)$.
Note that if $H_1(\partial\Sigma(F\cup\operatorname{id}_U);\ZZ)\rightarrow H_1(\Sigma(F\cup\operatorname{id}_U);\ZZ)$
is surjective, then the map $\widetilde{\phi}_*$ is itself independent of $\phi$,
by an argument similar to the one used in Remark~\ref{rem:groupaction}.
\end{remark}

\subsection{Dotted Link Cobordisms}\label{subs:dottedlinkcobordisms}

Let $S$ be a broken surface diagram
representing a link cobordism $F\subset\mathbb{R}^3\times I$.
By a configuration of dots on $S$, we will mean a collection of at most finitely many distinct
dots placed on the surface $S$, such that each dot lies in the interior of an oversheet of $S$,
and such that no two dots occur at the same time-coordinate.

\begin{definition}\label{def:DotS}
Let $\operatorname{Dot}(S)$ be the free abelian group generated by all configurations of dots on $S$,
modulo the following relations:
\begin{enumerate}
\item\label{rel:freely}
Dots can move freely along the oversheets of $S$.
\item\label{rel:sign}
If two dots in a configuration get moved past each other in time-direction,
then the configuration changes its sign.
\item\label{rel:zero}
If two dots lie on the same oversheet of $S$, then the configuration is equal to zero.
\item\label{rel:adjacent}
For adjacent oversheets of $S$, the relation from Figure~\ref{fig:migration} holds.
\end{enumerate}
\begin{figure}[h]
\begin{center}
\includeFig{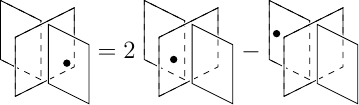}
\end{center}
\caption{Dot migration relation.
The right-hand side in this picture represents a linear
combination of two dot configurations on $S$.
}\label{fig:migration}
\end{figure}
\end{definition}

Given a dot configuration $d=\{d_1,\ldots,d_\ell\}$,
we can consider the element 
$f_1\wedge\cdots\wedge f_\ell\in\Lambda^*\col(S)$, where $f_i$
denotes the oversheet of $S$ that contains the dot $d_i$,
and where we are assuming that
the dots $d_1,\ldots,d_\ell$ are numbered in order of
decreasing time-coordinate.
It is easy to see that the assignment $d\mapsto f_1\wedge\cdots\wedge f_\ell$
takes the relations from Definition~\ref{def:DotS}
to the the defining relations of $\Lambda^*\col(S)$, and we thus obtain an identification
\[
\operatorname{Dot}(S)=\Lambda^*\col(S).
\]
Because of Lemma~\ref{lem:ColFH}, this identification also induces an identification
\begin{equation}\label{eqn:DotSH}
\operatorname{Dot}(S)=\Lambda^*H_1(\Sigma(F\cup\operatorname{id}_U);\ZZ).
\end{equation}

We now define a category \mbox{ $\mathcal{C}\mathit{ob}^4_{\bullet}$}
whose objects are link diagrams, and whose morphisms are given pairs $(S,d)$,
where $S$ is a broken surface diagram, and $d$ is an element of $\operatorname{Dot}(S)$.
Such pairs $(S,d)$ will be considered up to an equivalence relation, to be described later.
The composition in \mbox{$\mathcal{C}\mathit{ob}^4_{\bullet}$}
is given by
\begin{equation}\label{eqn:compositiondots}
(S,d)\circ(S',d'):=(S\circ S',(-1)^{|d||S'|}d\cup d'),
\end{equation}
where $d$ and $d'$ are configurations of dots, $|d|$ denotes the number of dots in the configuration $d$,
and $|S'|$ denotes the superdegree
of the link cobordism represented by $S'$. We extend definition \eqref{eqn:compositiondots}
bilinearly to the case where $d$ and $d'$ are linear combinations of dot configurations.

To describe the equivalence relation on pairs $(S,d)$, we consider
two broken surface diagrams $S$ and $S'$ that represent
link cobordisms $F$ and $F':=\phi(F)$, respectively, where 
$\phi$ is as in the proof of Lemma~\ref{lem:OKhFcinvariance}.
Specifically, we assume that $\phi\in D(S^3\times I)$ is
induced by one of the elementary movie moves described there,
and that this move is supported in a narrow strip $\mathbb{R}^3\times[r,s]\subset\mathbb{R}^3\times I$.
Then the proof of Lemma~\ref{lem:OKhFcinvariance}
and the identification from \eqref{eqn:DotSH} show that $\operatorname{Dot}(S)$
is linearly generated by dot configurations on $S\setminus(\mathbb{R}^2\times[r,s])$.
In particular, this implies that there is an isomorphism
\begin{equation}\label{eqn:mappsi}
\psi\colon \operatorname{Dot}(S)\longrightarrow\operatorname{Dot}(S')
\end{equation}
which is given by the identity map on dot configurations on 
$S\setminus(\mathbb{R}^2\times[r,s])=S'\setminus(\mathbb{R}^2\times[r,s])$, and
which corresponds to the isomorphism $\widetilde{\phi}_*$ from Subsection~\ref{subs:decorated}.

On pairs $(S,d)$ representing morphisms of \mbox{$\mathcal{C}\mathit{ob}^4_{\bullet}$},
we now impose the minimal equivalence relation such that
\[
(S,d)\sim(S',\psi(d))
\]
for all $S,S',\psi$ as in \eqref{eqn:mappsi} and for all $d\in\operatorname{Dot}(S)$.
Note that after imposing this relation, the second entry in $(S,d)$ can
be viewed as an element of a quotient set of $\operatorname{Dot}(S)$,
which corresponds to the quotient set from Remark~\ref{rem:groupaction}
under the identification \eqref{eqn:DotSH}.
The following is clear from the definitions:

\begin{lemma}\label{lem:DotLambda} The identification \eqref{eqn:DotSH} induces an equivalence of categories
$\mathcal{C}\mathit{ob}^4_{\bullet}\rightarrow\mathcal{C}\mathit{ob}^4_{\Lambda}$.
\end{lemma}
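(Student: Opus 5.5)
We will build a functor $\Theta\colon\mathcal{C}\mathit{ob}^4_{\bullet}\rightarrow\mathcal{C}\mathit{ob}^4_{\Lambda}$ that is the identity on objects and fully faithful. Since the objects of $\mathcal{C}\mathit{ob}^4_{\bullet}$ are link diagrams and those of $\mathcal{C}\mathit{ob}^4_{\Lambda}$ are generic links, with every generic link represented by a diagram, $\Theta$ is also essentially surjective on objects. On morphisms we set $\Theta(S,d):=(F,\iota_S(d))$, where $F$ is the link cobordism represented by $S$ and $\iota_S\colon\operatorname{Dot}(S)\rightarrow\Lambda^*H_1(\Sigma(F\cup\operatorname{id}_U);\ZZ)$ is the identification \eqref{eqn:DotSH}.

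The key steps, in order, are as follows.

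First, we check that $\Theta$ respects the equivalence relations. If $(S,d)\sim(S',\psi(d))$ for $\psi$ as in \eqref{eqn:mappsi}, then by construction $\psi$ corresponds to $\widetilde{\phi}_*$ under \eqref{eqn:DotSH}. Hence $\iota_{S'}\circ\psi=\widetilde{\phi}_*\circ\iota_S$, and the relation \eqref{eqn:equiv} gives $(F,\iota_S(d))\sim(\phi(F),\widetilde{\phi}_*\iota_S(d))$. Since the relation on $\mathcal{C}\mathit{ob}^4_{\bullet}$ is the minimal one generated by these elementary moves, this suffices.

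Second, we check compatibility with composition. For dot configurations $d$ on $S$ and $d'$ on $S'$, with dots ordered by decreasing time, the dots of $d$ in $S\circ S'$ all lie above those of $d'$. Therefore $\iota_{S\circ S'}(d\cup d')=\iota_S(d)\wedge\iota_{S'}(d')$, where the right-hand side is computed via the Mayer--Vietoris identification of Lemma~\ref{lem:ColFH}. The signs $(-1)^{|d||S'|}$ in \eqref{eqn:compositiondots} and $(-1)^{|c||F'|}$ in \eqref{eqn:composition} agree, because $|c|=|d|$ and $|S'|=|F'|$. Identity morphisms (the product diagram with the empty dot configuration) go to $(\operatorname{id}_L,1)$.

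Third, we prove full faithfulness, which is the main obstacle. Surjectivity on hom-sets is clear. Given $(F,c)$, isotope $F$ by some $\phi\in D(S^3\times I)$ to a generic cobordism with a broken surface diagram $S$; then take $d=\iota_S^{-1}(\widetilde{\phi}_*c)$. For injectivity, suppose $(F_1,\iota_{S_1}d_1)\sim(F_2,\iota_{S_2}d_2)$, that is, $F_2=\phi(F_1)$ and $\iota_{S_2}d_2=\widetilde{\phi}_*\iota_{S_1}d_1$. As in the proof of Lemma~\ref{lem:OKhFcinvariance}, $\widetilde{\phi}_*$ depends only on the family $\phi_t(F_1)$. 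We may therefore replace $\phi$ by an isotopy realized by a finite sequence of Carter--Saito movie moves and time-reorderings, each supported in a narrow strip. For each such elementary move, the induced map on $\operatorname{Dot}$ is exactly the $\psi$ of \eqref{eqn:mappsi}, and it corresponds to that move's contribution to $\widetilde{\phi}_*$. Composing along the sequence then gives $(S_1,d_1)\sim(S_2,d_2)$ in $\mathcal{C}\mathit{ob}^4_{\bullet}$.

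The delicate point is justifying that every $\phi\in D(S^3\times I)$ is, up to an isotopy fixing $F$ setwise (which does not change $\widetilde{\phi}_*$), a composite of such elementary moves. Here we will invoke the Carter--Saito theorem together with the argument already given in the proof of Lemma~\ref{lem:OKhFcinvariance}. Since the dot relations are defined to match exactly these elementary moves, we expect the remaining verification to be routine.
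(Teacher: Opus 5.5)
Your proposal is correct and is essentially the argument the paper intends. The paper simply declares the lemma clear from the definitions, since the relations and composition signs in $\mathcal{C}\mathit{ob}^4_{\bullet}$ were built to correspond to \eqref{eqn:equiv} and \eqref{eqn:composition} under \eqref{eqn:DotSH}, and your faithfulness step uses the same reduction of $\phi$ to Carter--Saito moves and time-reorderings that appears in the proof of Lemma~\ref{lem:OKhFcinvariance}.
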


By precomposing the
odd Khovanov functor \mbox{$OKh\colon\mathcal{C}\mathit{ob}^4_{\Lambda}\rightarrow\operatorname{Ab}/\{\pm 1\}$} from Theorem~\ref{thm:functor}
with the equivalence from this lemma, we can turn it
into a functor on the category $\mathcal{C}\mathit{ob}^4_{\bullet}$.
In particular, the resulting functor on $\mathcal{C}\mathit{ob}^4_{\bullet}$ assigns a linear map
\[
OKh(S,d)\colon OKh(D_0)\longrightarrow OKh(D_1)
\]
to each pair $(S,d)$ as above. 
In the type Y setting, the map $OKh(S,d)$ assigned to 
a broken surface diagram $S$ and
a dot configuration $d$ is given by formula
\eqref{eqn:OKhFc} from Subsection~\ref{subs:oddmaps}, 
where each instance of the map $z_e$ that appears in that
formula corresponds to a dot in the configuration $d$,
and each $F_i$ is an elementary link
cobordism that occurs in a movie presentation corresponding to $S$.

If we define the
odd Khovanov cobordism maps as in \cite{MW2024}, then
the map $OKh(S,d)$ also has the following property:

\begin{lemma}\label{lem:verticaltube} In the type Y setting, the map $OKh(S,d)$ satisfies the relation
\begin{equation}\label{eqn:verticaltube}
\includeFig{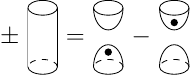}
\end{equation}
where the two terms on the right-hand side are two be viewed as a linear combination of two dot configurations
on the shown link cobordism.
\end{lemma}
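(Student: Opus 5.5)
The plan is to verify \eqref{eqn:verticaltube} locally on the odd Khovanov bracket, by reducing it to the vertical neck-cutting relation \eqref{eqn:vneck} in the dotted odd Putyra category \mbox{$o\:\!\mathcal{C}\mathit{ob}^3_{\bullet/\ell}$}. I am reading the left-hand side of \eqref{eqn:verticaltube} as a link cobordism containing a vertical tube, namely a compressible neck whose time-slices are small circles. On the right-hand side the tube is compressed and the resulting disks carry a dot, once on the upper disk and once on the lower one. I cannot see the figure, so this reading is an assumption.

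First, I would choose a movie presentation of the left-hand side in which the tube is realized by elementary Morse events inside a small disk region $A\subset\mathbb{R}^2$. Away from $A$ the movie is the identity, with no crossings inside $A$. The right-hand side is then realized by the same movie with the tube region replaced by a death followed by a birth, carrying a dot before or after. With the cobordism maps defined as in \cite{MW2024}, each elementary cobordism induces a cubical chain map on $\llbracket D\rrbracket$. Its component at each vertex $\alpha$ of the resolution cube is $\pm$ the corresponding chronological cobordism on $D_\alpha$, identity outside $A$. So on every vertex the left-hand side is, up to a sign depending only on $\alpha$ and the chosen sign conventions, a chronological cobordism with a vertical neck above $A$. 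Applying \eqref{eqn:vneck} vertex by vertex rewrites it as a sum of two dotted cobordisms: a dot on the cap above the cut and a dot on the cup below. By Lemma~\ref{lem:xcommute}, these dots can be slid up or down along the identity parts of the cobordism, to heights just above or just below the tube.

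Second, I would match the vertexwise dotted identity cobordisms with the dot maps used in formula \eqref{eqn:OKhFc}. A dotted sheet at vertex $\alpha$ is $(-1)^{S(D,\alpha)}x_e$ by \eqref{eqn:xedef}, and $x_e$ is in turn $(-1)^{|F_{\leq i}|}z_e$ by \eqref{eqn:zedef}. So each term produced by neck-cutting equals $\pm\,OKh(S,d)$ for the corresponding one-dot configuration $d$. Because we are in the type Y setting, the dot chain maps depend up to homotopy only on the oversheet (Lemma~\ref{lem:typeYslide}), so the position of the dot within its sheet does not matter.

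The main obstacle is the sign bookkeeping: showing that the two terms appear with exactly the relative sign drawn in \eqref{eqn:verticaltube}, uniformly in $\alpha$. To handle this I would track three sources of sign separately:
\begin{enumerate}
\item the factor $(-1)^{S(D,\alpha)}$ in \eqref{eqn:xedef}, which is unchanged across merges and changes by one across splits;
\item the superdegree shift $|F_{\leq i}|$ in $z_e$, which differs between the dot placed above the tube and the dot placed below it by the superdegree of the tube region;
\item the anticommutation of a dot with the odd-superdegree death or split in the relations of Subsection~\ref{subs:dotchroncobs}.
\end{enumerate}
I expect these three contributions to cancel so that the relative sign is independent of $\alpha$; this is the step I would check most carefully. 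Any remaining overall sign is absorbed by the ambiguity of $OKh$.
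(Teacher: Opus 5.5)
Your route is the paper's: verify the relation vertex by vertex on $\llbracket D\rrbracket$ using \eqref{eqn:vneck}, then fix the signs. But the sign check is the whole content of the lemma beyond ``up to sign,'' and you leave it as an expectation whose proposed mechanism lacks the key ingredient. Of your three sources, only $(-1)^{S(D,\alpha)}$ from \eqref{eqn:xedef} depends on the vertex. It appears once in each right-hand term, since in both terms the dot sits on the small circle in the same diagram, at the same vertex $\alpha$. So it does not affect the \emph{relative} sign, which is what you focus on. However, it makes each right-hand term differ from the corresponding cobordism produced by \eqref{eqn:vneck} by a vertex-dependent sign, while the left-hand side carries none. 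For the same reason, your intermediate claim that each neck-cutting term ``equals $\pm OKh(S,d)$'' is unjustified as stated.

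Your fallback, that ``any remaining overall sign is absorbed by the ambiguity of $OKh$,'' does not cover this. $OKh$ is ambiguous only up to a \emph{global} sign. A vertexwise rescaling by $(-1)^{S(D,\alpha)}$ is not of that form. For instance, the identity of $\llbracket D\rrbracket$ rescaled this way is not even a chain map, since it anticommutes with the split components of the differential.

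The missing fact is that the death chain map of \cite{MW2024} itself contains the sign $(-1)^{S(D,\alpha)}$ at the vertex $\alpha$. This is for the same reason as in \eqref{eqn:xedef}: a death is odd. The birth chain map, by contrast, contains no sign. Write the two right-hand terms as in \eqref{eqn:OKhFc}, namely as $OKh(\mathrm{birth})\circ OKh(\mathrm{death})\circ z_e$ and $z_{e'}\circ OKh(\mathrm{birth})\circ OKh(\mathrm{death})$. Then the sign of the dot cancels the sign of the death at every vertex. The only surviving sign is the $\alpha$-independent factor $(-1)^{|F_{\leq i}|}$ from \eqref{eqn:zedef}. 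This factor differs between the two terms because a death followed by a birth has odd superdegree. That produces the relative sign in \eqref{eqn:verticaltube}, and what remains is a single global sign.

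Two minor points: the left-hand tube involves no Morse events at all, since it is the identity on a small circle. Also, Lemma~\ref{lem:typeYslide} is not needed, since that circle has no crossings.
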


\begin{proof}
We will prove this directly for the formal Khovanov bracket $\llbracket D\rrbracket$.
At any given vertex $D_\alpha$ of the resolution cube, the relation from \eqref{eqn:verticaltube}
holds up to possible signs by the vertical neck-cutting relation \eqref{eqn:vneck}.
To see that the signs are as in \eqref{eqn:verticaltube}, 
note that the component $(x_e)_\alpha$ of the dot chain map $x_e$ contains the sign $(-1)^{S(D,\alpha)}$,
and the same sign also occurs in the definition of the death chain map from \cite{MW2024}.
On the right-hand side of \eqref{eqn:verticaltube},
the signs that occur in $x_e$ and in the death chain map thus cancel.

Since the birth chain map from \cite{MW2024} contains no signs,
the only remaining sign is the sign $(-1)^{|F_{\leq i}|}$ that occurs
in the definition of $z_e$. This sign is independent
of the vertex $\alpha$, and it is different for the two terms on the right-hand side
of \eqref{eqn:verticaltube}, because a link cobordism formed by a death and a birth has odd superdegree.
\end{proof}

\subsection{Link Cobordisms with Horizontal Tubes}\label{subs:tubes}

The previous lemma has a variant for horizontal tubes:

\begin{lemma}\label{lem:horizontaltube}
In the type Y setting, the map $OKh(S,d)$ satisfies the relation
\[
\includeFig{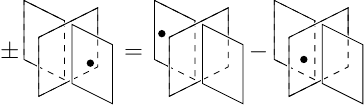}
\]
provided the tube on the left-hand side does not cross any sheets
in the broken surface diagram.
\end{lemma}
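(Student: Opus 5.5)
The plan is to mimic the proof of Lemma~\ref{lem:verticaltube} and work directly on the formal odd Khovanov bracket $\llbracket D\rrbracket$. First I will fix a movie presentation of the link cobordism on the left-hand side. Because the tube does not cross any sheets of the broken surface diagram, the tube is supported over a small disk region of $\mathbb{R}^2$ that meets the link diagram $D$ only in two parallel arcs. These arcs lie on edges $a$ and $b$ of $D$. In the movie, the tube is then the composition $F_2\circ F_1$ of two saddle moves performed in this region: the first band joins $a$ to $b$, and the second band cuts them apart again. In between, only planar isotopy occurs and no crossings are created. Hence $OKh(F_2\circ F_1)$ is the composition of the two saddle chain maps from \cite{MW2024}. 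At each vertex $\alpha$ of the resolution cube, these chain maps are given by the elementary saddle cobordisms, multiplied by signs that depend only on $\alpha$ and on the chosen sign conventions.

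Second, I will compute the component of $\llbracket F_2\circ F_1\rrbracket$ at a fixed vertex $\alpha$. It is a signed chronological cobordism $D_\alpha\to D_\alpha$. This cobordism is the identity everywhere except for a horizontal tube joining the sheets $a_i\times I$ and $a_j\times I$ that contain $a$ and $b$; here $i=j$ is possible, when $a$ and $b$ lie on the same component of $D_\alpha$. The horizontal neck-cutting relation \eqref{eqn:hneck} rewrites this tube as a difference of two dotted identity cobordisms, one dot on the sheet through $a$ and one on the sheet through $b$. This relation has the correct form only when the upper saddle is oriented to the front, so I will choose the arrow of the first saddle in the movie accordingly. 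I will also invoke \cite{NW2024} to see that the orientation of the lower saddle is irrelevant. Comparing with \eqref{eqn:xedef} then shows that, up to a vertex-dependent sign, the component at $\alpha$ equals $(x_a)_\alpha-(x_b)_\alpha$.

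Third comes the sign bookkeeping, which I expect to be the main obstacle. I must show that the product of four signs is independent of $\alpha$ and matches the signs in the statement. These signs are the sign of the merge or split chain map at each of the two saddles, the factor $(-1)^{S(D,\alpha)}$ hidden in $(x_e)_\alpha$, and the global factor $(-1)^{|F_{\leq i}|}$ in the definition \eqref{eqn:zedef} of $z_e$. The tube has superdegree $1$ (one merge and one split), so placing the dots below the tube rather than above changes the $z_e$ sign by $(-1)$. I will therefore first place both dots at a fixed height and then distinguish two cases: either $a$ and $b$ lie on different components of $D_\alpha$ (merge then split), or on the same component (split then merge). In each case I will check that the saddle signs together with the change of $S(D,\alpha)$ along split edges (recall $S$ increases by one on splits) cancel against $(-1)^{S(D,\alpha)}$. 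This is analogous to the cancellation between the death and dot signs in Lemma~\ref{lem:verticaltube}. If a residual sign remains that depends on $\alpha$, I would try to absorb it using the freedom to choose the overall sign of $OKh(F_i)$, which is only defined up to sign, since that choice is uniform in $\alpha$. Finally, since the identity holds componentwise as chain maps, it holds on $OKh$ and hence for $OKh(S,d)$ with arbitrary additional dots elsewhere, using formula \eqref{eqn:OKhFc}.
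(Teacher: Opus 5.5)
Your first two steps are fine: the tube is a pair of saddles along an arc $\gamma$ that misses $D$, and at each vertex \eqref{eqn:hneck} gives $\pm(\text{dot}_a-\text{dot}_b)$, or $0$ when $a$ and $b$ lie on the same circle. (It is the upper saddle, not the first one, whose arrow matters.)

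The gap is in the third step, and that step is the whole content of the lemma. Write $\llbracket F_2\circ F_1\rrbracket_\alpha=\rho(\alpha)(x_a-x_b)_\alpha$. The relation holds only if $\rho(\alpha)$ is independent of $\alpha$; otherwise you get a vertexwise-signed combination that need not equal $\pm(x_a-x_b)$ on homology. The birth and death maps in Lemma~\ref{lem:verticaltube} have explicit signs, $1$ and $(-1)^{S(D,\alpha)}$. The saddle chain maps of \cite{MW2024} instead carry vertex-dependent signs, coming in effect from a sign assignment on a cube with the saddle as an extra direction. Your outline never says what these signs are, or why their product with $(-1)^{S(D,\alpha)}$ should be constant, so the promised ``case check'' has nothing concrete to check. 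Your fallback also fails: the sign ambiguity of $OKh(F_i)$ is a single global sign, so it cannot absorb an $\alpha$-dependent discrepancy.

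The paper avoids saddle signs altogether. It uses the well-definedness of $OKh(S,d)$ (Theorem~\ref{thm:functor}, with the isotopy sign independent of the decoration by Lemma~\ref{lem:signindependent}). It deforms the horizontal tube into one containing a vertical segment, applies Lemma~\ref{lem:verticaltube} there, and deforms back, sliding the two dots onto the sheets through $a$ and $b$.

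If you want to stay at chain level, you need two further ingredients.
\begin{enumerate}
\item Both $\llbracket F_2\circ F_1\rrbracket$ and $x_a-x_b$ are chain maps, so $(\rho(\alpha)-\rho(\alpha'))\,(x_a-x_b)_{\alpha'}\circ d_\xi=0$ for every edge $\xi\colon\alpha\to\alpha'$. After applying $\mathcal{F}_{odd}$, the map $(x_a-x_b)_{\alpha'}\circ d_\xi$ sends $1$ to a primitive element whenever $a,b$ lie on different circles at both ends. It is therefore not annihilated by $2$, and $\rho$ is constant along such edges.
\item Step 1 only gives constancy on connected components of the set $G$ of vertices where $a$ and $b$ lie on different circles. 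Moreover, $x_a-x_b$ truncated to any single component of $G$ is itself a chain map. So chain-level consistency alone cannot fix the relative signs, and you must also prove that $G$ is connected.
\end{enumerate}
Connectivity does hold here because $\gamma$ is disjoint from all crossing arcs. Suppose that from some vertex of $G$, every remaining flip toward a target vertex of $G$ merged the circles of $a$ and $b$. Those arcs, together with $\gamma$, would then be parallel arcs between the two circles. Surgering all of them would put $a$ and $b$ on one circle, contradicting that the target lies in $G$.
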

\begin{proof} This follows from the pictures below, in which the symbol $\doteq$
stands for equality up to sign:
\[
\includeFig{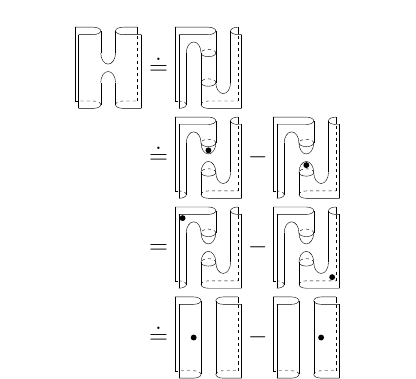}
\]
Note that the second equality follows from Lemma~\ref{lem:verticaltube},
while the other equalities follow from the well-definedness of $OKh(S,d)$.
\end{proof}

Using this lemma, we can now prove the following main result,
in which $\alpha$ and $F_\alpha$ are as in the introduction.

\TheoremTubeAction*

\begin{proof} This follows because the map $OKh(F,c)$
from Subsection~\ref{subs:oddmaps} satisfies the relations below:

\begin{center}
\includeFig{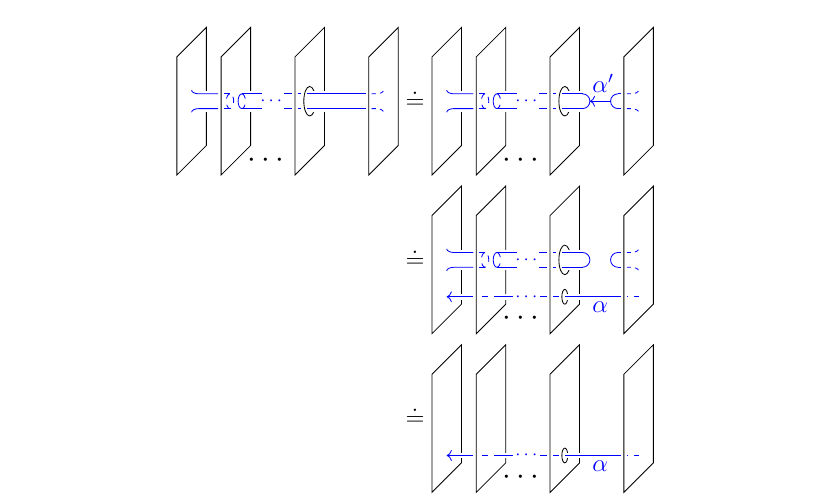}
\end{center}

Here, link cobordisms are depicted by their broken surface diagrams,
and a link cobordism $F$ with a horizontal arc $\alpha$
represents a morphism $(F,c)=(F,[\widehat{\alpha}])$
in the category \mbox{$\mathcal{C}\mathit{ob}^4_\Lambda$}.
The~first equation
follows from Lemma~\ref{lem:horizontaltube}
together with Remark~\ref{rem:dotdifference},
and the other two equations follow from the well-definedness of the map $OKh(F,c)$.
\end{proof}

\subsection{Link Cobordisms with Horizontal Tori}
As a useful application of
Theorem~\ref{thm:tubeaction},
we will now deduce a result about
the map induced by a toroidal component in a link cobordism.
Specifically, consider a link $L\subset\mathbb{R}^3$ and a simple closed curve
$\lambda\subset\mathbb{R}^3\setminus L$
in the complement of $L$. Let $V\subset(\mathbb{R}^3\times I)\setminus\operatorname{id}_L$
denote a $3$-dimensional thickening of the curve $\lambda\times\{1/2\}$, and let
\[T\subset(\mathbb{R}^3\times I)\setminus\operatorname{id}_L\]
denote the torus
$T:=\partial V$. Then the map induced by $\operatorname{id}_L\cup T$
is given as follows:

\begin{proposition}\label{prop:torus}
We have
\[
OKh(\operatorname{id}_L\cup T)=\begin{cases}
0 &\mbox{if $lk(L,\gamma)$ is even,}\\
\pm 2\operatorname{id} &\mbox{if $lk(L,\gamma)$ is odd.}
\end{cases}
\]
\end{proposition}

\begin{proof}
We will regard the torus $T$ as an unknotted $2$-sphere $S$ with a tube $t$ attached to it,
where the $2$-sphere $S$ is centered at a point on the loop $\lambda\times\{1/2\}$, and the tube $t$
runs along a subarc $\alpha$ of this loop with $\partial\alpha\subset S$.
Let $e\subset\mathbb{R}^3\times\{1/2\}$ denote the equator of $S$, and regard it as an unknot in the complement of $L$.
Moreover, assume $\partial\alpha\subset e$, and let $F_\alpha$ denote the
identity cobordism of $L\cup e$ with the horizontal tube $t$ attached to it.
By Theorems~\ref{thm:arcaction} and \ref{thm:tubeaction}, the map $\pm OKh(F_\alpha)$
is given by
\[
-x_e+2x_{a_1}-2x_{a_2}+\ldots+(-1)^{r-1}2x_{a_r}+(-1)^rx_e,
\]
or by
\[
-x_e+2x_{b_1}-2x_{b_2}+\ldots+(-1)^{s-1}2x_{b_s}+(-1)^sx_e,
\]
where $a_1,\ldots,a_r$ are the edges of $L$ that $\lambda$ overcrosses,
and $b_1,\ldots,b_s$ are the edges that it undercrosses.
Now note that  $\operatorname{id}_L\cup T$ can be obtained from $F_\alpha$
by capping off the copies of $e$ that appear in the top and the bottom boundary of $F_\alpha$.
The above formulas for the map $OKh(F_\alpha)$ therefore imply
\[
OKh(\operatorname{id}_L\cup T)=\pm(-1+(-1)^r)\operatorname{id}
\]
or
\[
OKh(\operatorname{id}_L\cup T)=\pm(-1+(-1)^s)\operatorname{id},
\]
where we are using that capping of the identity cobordism of $e$ yields the
$2$-sphere $S$, and a $2$-sphere evaluates to $0$ in the odd dotted Putyra
category if it contains no dots, and to $\pm 1$ if it contains a single dot
(as in the proof of Lemma~\ref{lem:verticaltube}, we can
ignore the sign that occurs in $x_e$ because it
cancels with the sign in the map induced by the death in $S$).
The corollary now follows because $r$ and $s$ have the
same parity as the linking number $lk(L,\gamma)$.
\end{proof}

\section{Odd Khovanov-Jacobsson Numbers}\label{s:okjnumbers}

In \cite{Ta2005}, Tanaka---working with even Khovanov homology---built off the recent functoriality results of Jacobsson, Khovanov, and Bar-Natan \cite{Ja2004,Kh2005,Bn2005} to define an invariant for smooth surface-knots realized as link cobordisms.
Tanaka's invariant extends to dotted $2$-knots, but turns out to be uninteresting, and evaluates
to $1$ on every $2$-knot decorated by a single dot \cite{Ra2005,Ta2005}.
Using the functoriality of odd Khovanov homolog up to sign, we can define
a similar $2$-knot invariant.
Furthermore the module structure on odd Khovanov homology provides a tool to compute this invariant.
Let $F$ be a 2-knot and $S$ be a the broken surface diagram of $K$ with $d\in \operatorname{Dot}(S)$ consisting of a single dot.  The odd Khovanov map $OKh(S,d)$ provides an endomorphism of $OKh(\emptyset)=\ZZ$, and we define $n(S,d)\in \ZZ$ as the image of $1\in\ZZ$ under this endomorphism.

\begin{proposition}
	$n(F)\defeq\left|n(S,d)\right|$ is an $\NN$-valued invariant of 2-knots.
\end{proposition}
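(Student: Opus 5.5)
We must show that $|n(S,d)|$ depends neither on the broken surface diagram $S$ representing $F$ nor on the placement of the single dot $d$. The plan is to reduce both kinds of independence to the functor from Theorem~\ref{thm:functor}, precomposed with the equivalence $\mathcal{C}\mathit{ob}^4_\bullet\rightarrow\mathcal{C}\mathit{ob}^4_\Lambda$ of Lemma~\ref{lem:DotLambda}.

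\emph{Step 1: dependence on the dot.} Via the identification \eqref{eqn:DotSH}, a single dot on $S$ corresponds to an element of $H_1(\Sigma(F\cup\operatorname{id}_U);\ZZ)\cong\col(S)$. Let $f$ be the oversheet containing the dot and let $\phi$ be the map $\col(S)\rightarrow\ZZ$ sending every oversheet to $1$. The class of $f$ lies in $\phi^{-1}(1)$, and any two such classes differ by an element of $\ker\phi$. Following Remark~\ref{rem:HSigmaFone} and the proof of Lemma~\ref{lem:ColDH}, this kernel is identified with $H_1(\Sigma(F);\ZZ)$. By Remark~\ref{rem:HSigmaFtwo}, that group is finite of odd order. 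Now compare the action of a class $h\in H_1(\Sigma(F);\ZZ)$ with the action of a single dot $f$:
\begin{itemize}
\item The map $OKh(S,h)$ is an endomorphism of $OKh(\emptyset)=\ZZ$.
\item Since $h$ is torsion, some multiple $mh$ vanishes, so $m\cdot OKh(S,h)=0$ in $\operatorname{End}(\ZZ)$.
\item Hence $OKh(S,h)=0$.
\end{itemize}
Consequently $OKh(S,f)=OKh(S,f')$ for any two oversheets $f,f'$. This gives independence of the dot's location up to the overall sign ambiguity.

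\emph{Step 2: dependence on $S$.} If $S$ and $S'$ represent isotopic $2$-knots, they are related by Carter--Saito movie moves and time reorderings. These correspond to some $\phi\in D(S^3\times I)$ with $\phi(F)=F'$. By Corollary~\ref{cor:OKhFcinvariance}, $OKh(S',\widetilde\phi_*(c))=\pm OKh(S,c)$. Because $\widetilde\phi_*$ is a based isomorphism, it commutes with the augmentation $\phi$ (this holds since each oversheet is sent to an oversheet, as in the construction of $\psi$ in \eqref{eqn:mappsi}). So $\widetilde\phi_*$ sends a single-dot class to a class of augmentation $1$. By Step 1, every class of augmentation $1$ induces the same map up to sign. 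Hence $|n(S',d')|=|n(S,d)|$.

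A further point concerns isotopies in $S^3\times I$ versus $\mathbb{R}^3\times I$. Since $F$ is closed, both boundary links are empty, so by the remark following Lemma~\ref{lem:cobproperties} no extra ambiguity arises. Taking absolute values removes the remaining sign ambiguity, so $n(F)\in\NN$.

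\emph{Main obstacle.} The delicate point is Step 1. It requires checking carefully that the difference of two single-dot classes lies in the image of $H_1(\Sigma(F);\ZZ)$ inside $H_1(\Sigma(F\cup\operatorname{id}_U);\ZZ)$, and that this subgroup is torsion. The torsion claim needs the odd-torsion statement of Remark~\ref{rem:HSigmaFtwo}; in fact finiteness alone suffices. If the identification of $\ker\phi$ is awkward, I would instead argue over $\mathbb{Q}$: since $\ker\phi\otimes\mathbb{Q}=0$, any two single-dot maps agree after tensoring with $\mathbb{Q}$, and endomorphisms of $\ZZ$ are determined rationally.
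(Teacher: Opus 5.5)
Your proposal is correct and follows the paper's proof. Step~1 is essentially the paper's Argument~2: single-dot maps define a homomorphism $\col(S)\rightarrow\ZZ$, which must kill the finite group $\ker\phi\cong H_1(\Sigma(F);\ZZ)$ and is therefore a multiple of the augmentation. Step~2 is the paper's direct appeal to Theorem~\ref{thm:functor}.

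The paper also gives a second, topological argument for dot-independence. It attaches a tube along an arc joining the two dots and applies the vertical neck-cutting relation of Lemma~\ref{lem:verticaltube}. It then uses that maps induced by undotted closed surfaces vanish (Lemma~\ref{lem:cobproperties}).
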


\begin{proof}
	To show that $n(F)$ is an invarient we must show that it is unaffected by where the dot is placed, and the choice of surface diagram $S$.
	We will start by fixing a broken surface diagram.  
	We must show that for $d,d'\in\text{Dot}(S)$ both consisting of single dots, $n(S,d)=n(S,d')$.  
	We will provide two different arguments for proving this fact, as together they highlight how the machinery we have developed allows one to seamlessly work from an algebraic or topological perspective. Note that there is a third argument, which we omit,
	and which is based on identifying ambient isotopy classes of $2$-knots with ambient isotopy classes of capped-off slice disks.
	
	\vspace{1em}\textit{Argument 1}\hspace{1em}
	Our goal is to attach a tube to $F$ so that the map $OKh(F_\alpha)$ induced by the resulting surface $F_\alpha$
	agrees with the map $\pm OKh(S,d-d')$. Up to this point, we have only shown how to do this when the dots $d$ and $d'$
	occur at the same time-coordinate (see Theorem~\ref{thm:tubeaction} and Remark~\ref{rem:dotdifference}).
	If the dots $d$ and $d'$ occur at different time-coordinates $t$ and $t'$, let $\alpha$ be an arc in $\mathbb{R}^3\times I$
	from $d$ to $d'$, such that $\alpha$ consists of a vertical piece $\{p\}\times[t,t']$ contained in the unbounded component of $S$,
	and of horizontal pieces in $\mathbb{R}^3\times\{t\}$ and $\mathbb{R}^3\times\{t'\}$ that connect $d$ and $d'$ to the point $p$,
	respectively. Assume that the horizontal pieces overcross any intermediate sheets of $F$, and let $F_\alpha$
	be the undotted surface obtained from $F$ by attaching a tube $t_\alpha$ running along the arc $\alpha$.
	As the tube $t_\alpha$ has a part that rises vertically, we can apply the vertical neck-cutting relation \eqref{eqn:verticaltube}
	from Lemma~\ref{lem:verticaltube}, where we are assuming that we are working in the type Y setting.
	We can then slide the dots along the severed tubes and onto the surface of $F$ near where $t_\alpha$ was attached, finally retracting the severed tubes back into $F$, leaving us with $F$ decorated by $d-d'\in\text{Dot}(S)$.
	This shows $OKh(F_\alpha)=\pm OKh(S,d-d')$, as desired, and since $F_\alpha$ is an undotted nonempty closed
	surface, Lemma~\ref{lem:cobproperties}\ref{propertyA} implies $OKh(F_\alpha)=0$, and thus $0=n(F_\alpha,0)=n(F,d)-n(F,d')$ and $n(F,d)=n(F,d')$.

	\vspace{1em}\textit{Argument 2}\hspace{1em}

	The assignment $a\mapsto n(S,d_a)$ where $d_a$ denotes a dot placed on the oversheet $a$
	extends to a homomorphism $\nu\colon\col(S)\rightarrow\ZZ$.
	As $S$ is a broken surface diagram of a 2-knot, the abelian group $\col(S)\cong H_1(\Sigma(F);\ZZ)\oplus\ZZ$ has rank 1,
	by Remarks~\ref{rem:HSigmaFone} and \ref{rem:HSigmaFtwo}.  
	Therefore, the only possible homomorphisms from $\col(S)$ to $\ZZ$ are scalar multiples of the obvious homomorphisms sending each oversheet $a\in\col(S)$ to $1\in\ZZ$. Hence it is necessary that $\nu(a)=\nu(a')$ for all oversheets $a,a'\in\col(S)$.
	
	The final requirement that $n(S,d)$ is invariant, up to sign, under changes in the broken surface diagram representing $F$ is a direct application of Theorem~\ref{thm:functor}.
\end{proof}

While for the even theory the Khovanov-Jacobsson number for a dotted $2$-knot $F$ is always equal to $1$,
in the odd setting the invariant $n(F)$ is nontrivial, and in \cite{MW2024} we conjectured the following:

\begin{conjecture}
	For every smooth 2-knot,  $n(F)$ agrees with $|H_1(\Sigma(F);\ZZ)|$.
\end{conjecture}

In \cite{MW2024} we announced that we would include a proof of this conjecture for ribbon 2-knots in this paper, and a proof of the conjecture for even-twist spun knots in our upcoming paper \cite{MW2026b}.  
More recently the conjecture was proven true in general by Spyropoulos, Vidyarthi, and Zhang in \cite{SVZ2026}.  
However, our proofs in the stated special cases
are of independent value because they are purely combinatorial,
while the proof in \cite{SVZ2026} is analytical.

\subsection{Ribbon 2-Knots}

Our conjecture does hold particularly for ribbon 2-knots:

\TheoremRibbonKnot*

\begin{proof}
	Let $F$ be a smoothly embedded ribbon 2-knot in Morse position, which we can consider as a link cobordism from the empty-link to itself:  $$F:\emptyset\rightarrow\emptyset$$   
	Ribbon 2-knots can be constructed by taking an $m+1$ trivial 2-link---we will denote these components $S_0,S_1,\dots,S_m$---and connecting each $S_i$ for $i>0$ to $S_0$ by a tube denoted $t_i$. 
	For a ribbon 2-knot we can---and will---impose the following restrictions on its movie presentation: first---chronologically---$S_0$ is born, followed by $S_1$ all through $S_m$, then the tubes occur
	sequentially, before each $S_i$ is capped off now in reverse order ending with $S_0$ (see Figure~\ref{fig:twoknot}).
	
	\begin{figure}[H]
		\centering
		\includeFig{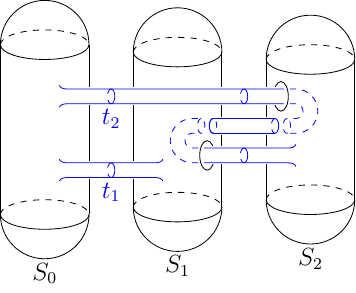}
		\caption{A ribbon 2-knot \label{fig:twoknot}}
	\end{figure}
	
	We now consider the cobordism 
	$$\dot F:U\rightarrow U$$ 
	from the unknot to itself produced by puncturing $F$ at the north and south poles of $S_0$ or alternatively begining the movie of $F$ directly after the birth of $S_0$ and ending directly before its death.
	On reduced odd Khovanov homology $\dot F$ induces the integer endomorphism
	$$f\defeq\overline{OKh}\bigl(\dot{F}\bigr):
	\underset{\substack{\rotatebox[origin=cc] {-90}{$ \cong $}\\\ZZ}}{\overline{OKh}(U)}
	\rightarrow
	\underset{\substack{\rotatebox[origin=cc] {-90}{$ \cong $}\\\ZZ}}{\overline{OKh}(U)}$$
		
	Let $L=L_0\cup L_1\cup\dots\cup L_m$ denote the $m+1$ component unlink living in a frame of the movie of $\dot F$
	that occurs after the birth of $S_m$, and before the movie of the first tube $t_1$ begins, so that each $L_i$ is a slice of the sphere $S_i$.
	
	\begin{figure}[H]
		\centering
		\includeFig{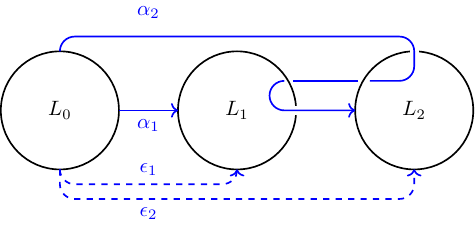}
		\caption{The link $L$, $\alpha$-arcs, and $\epsilon$-arcs for the 2-knot in Figure~\ref{fig:twoknot}.}\label{fig:linkL}
	\end{figure}
	
	On this link we can include arcs $\alpha_1,\dots,\alpha_m$ corresponding with the tubes $t_1,\dots,t_m$, so that for each $0<i\leq m$, the arc $\alpha_i$ connects $L_0$ to $L_i$ (see Figure~\ref{fig:linkL}).
	By Theorem~\ref{thm:tubeaction}, the portion of $f$ induced by the tubes in $\dot{F}$ can be expressed by the actions of the $\alpha$-arcs, giving $f$ the following overall structure:
	$$f=
	(\text{deaths of $S_1$ through $S_m$})\circ
	\left(\left[\widehat{\alpha}_1\right]\wedge\cdots\wedge\left[\widehat{\alpha}_m\right]\right)\circ
	(\text{births of $S_1$ through $S_m$}).$$
	Also consider the trivial arcs $\epsilon_1,\ldots,\epsilon_m$ where each $\epsilon_i$ connects $L_0$ to $L_i$ but does not link with $L_j$ for $0<j\neq i$.
	The homology classes $\left[\widehat{\epsilon}_1\right],\ldots,\left[\widehat{\epsilon}_m\right]$ form a basis for $H_1(\Sigma(L);\ZZ)\cong\ZZ^m$.
	Let $a_{ij}$ denote the scalars such that $\left[\widehat{\alpha}_i\right]=\sum_{j=1}^ma_{ij}\left[\widehat{\epsilon}_j\right]$, and let $A\defeq [a_{ij}]$.  
	The action induced by the tubes can be decomposed in the following manner
	
	\begin{equation}\nonumber
	\begin{split}
	\left(\left[\widehat{\alpha}_1\right]\wedge\cdots\wedge\left[\widehat{\alpha}_m\right]\right) 
	& =\det(A)\left(\left[\widehat{\epsilon}_1\right]\wedge\cdots\wedge\left[\widehat{\epsilon}_m\right]\right)\\
	& =\det(A)\left((x_1-x_0)\circ\cdots\circ(x_m-x_0)\right)\\
	& =\det(A)(x_1\circ x_2\circ\cdots\circ x_m)+\det(A)\sum(\text{remaining terms})\\
	\end{split}
	\end{equation}
	
	In each of the remaining terms, there will be at least one $i>0$ such that $-x_0$ replaces $x_i$ in the composition $x_1\circ x_2\circ\cdots\circ x_m$.  
	When we consider the remainder of $f$, which is induced by capping off the spheres $S_1,\ldots,S_m$ below and above,
this means that the resulting cobordism will contain the sphere $S_i$ undotted and unknotted from all other spheres,
	thus killing the entire term (since an isolated undotted sphere evaluates to zero).	
	The surviving term $x_1\circ x_2\circ\cdots\circ x_m$, capped off above and below, now consists of the identity cobordism 
	of $L_0$ together with each sphere $S_i$ for $i>0$ singly dotted.  
	We can remove the singly dotted spheres by using relation \eqref{eqn:DottedSphere},
	leaving us with the identity cobordism. Thus $f=\pm\det(A)\text{id}$, and it is easy to see that this implies $n(F)=|\det(A)|$.

	To complete the proof, we will now show $|\det(A)|=|H_1(\Sigma(F);\ZZ)|$.  
	Suppose that each tube $t_i$ is presented by a movie where a portion of $L_0$ knots through the circles $L_0,\ldots,L_m$
 via Reidemeister II moves, then merges with $L_i$, next splits from $L_i$, and finally retracts back along the same path undoing all the Reidemeister II moves. Note that no births or deaths occur in this movie presentation of $t_i$.
	Now consider a movie presentation for $F$ in which the merges in the $t_1,\ldots,t_m$ precede all of the splits.
	Thus, this movie presentation begins with the births in $S_0,\dots,S_m$ and then proceeds
	to the Reidemeister II moves and the merges in the tubes $t_1,\dots,t_m$ before running the entire process in reverse,
	starting with the splits in $t_1,\ldots,t_m$.
	The intermediate frame of the movie---between the final merge and the first split---consists of a knot $K$, and the cobordism $F$ naturally decomposes into a composition of cobordisms $C,\overline{C}\subset\RR^3\times I$ such that $F=\overline{C}\circ C$. 
	The cobordism $C:\emptyset\rightarrow K$ is a
	 ribbon disk for $K$, and 
	the cobordism $\overline{C}:K\rightarrow\emptyset$ is the mirror image of $C$ across the hyperplane $\RR^3\times\left\{\frac{1}{2}\right\}$.
	
	The branched double-cover of $S^4$ along $F$, denoted $\Sigma(S^4,F)$, decomposes into $\Sigma(B^4,C)$ and $\Sigma(B^4,\overline{C})$ connected along $\Sigma(S^3,K)$.
	In the Mayer-Vietoris sequence
	\[
	H_1(\Sigma(S^3,K)) \stackrel{\partial_1}{\longrightarrow}
	H_1(\Sigma(B^4,C))\oplus H_1(\Sigma(B^4,\overline{C})) \stackrel{\partial_2}{\longrightarrow}
	H_1(\Sigma(S^4,F))\longrightarrow 0
	\]
	for reduced singular homology,
	the map $\partial_2$ is surjective, and the map $\partial_1$
	is given by the $2\times 1$ matrix with components $\iota$ and $-\overline{\iota}$, where $\iota$ and $\overline{\iota}$
	denote the obvious inclusion-induced maps.
	As $C$ is a ribbon disk,
	all the generators of $H_1(\Sigma(B^4,C))$ appear in $H_1(\Sigma(S^3,K))$  (see \cite{OS2022}), and thus the maps $\iota,\overline{\iota}$ are surjective.
	Modding out by the image of $\partial_1$ therefore identifies like elements of $H_1(\Sigma(B^4,C))$ and $H_1(\Sigma(B^4,\overline{C}))$ resulting in a quotient group isomorphic to $H_1(\Sigma(B^4,C))$. Hence we obtain
	\[
	H_1(\Sigma(S^4,F))\cong\frac{H_1(\Sigma(B^4,C))\oplus H_1(\Sigma(B^4,\overline{C}))}{\operatorname{im}\partial_1}\cong H_1(\Sigma(B^4,C)).
	\]
	
	The branched double-cover of the four-ball along $C$ admits the following handle decomposition consisting of one 0-handle, $m$ 1-handles, and $m$ 2-handles.  The $0$-handle is a branched double-cover of a four-ball, branched along a disk corresponding to the birth of $L_0$,
	and
	the 1-handles correspond to the births of $L_1,L_2,\dots,L_m$. Note that the co-attaching spheres of these 1-handles are dual to the curves $\widehat{\epsilon}_1,\dots,\widehat{\epsilon}_m$.
	The 2-handles correspond to the $m$ merge saddles that appear in the tubes $t_1,t_2,\dots,t_m$, and are attached along the curves $\widehat{\alpha}_1,\dots,\widehat{\alpha}_m$ (cf. \cite[Theorem~7.1]{OS2022}).
	From this handle decomposition, it follows that $A$ is a presentation matrix for $H_1(\Sigma(B^4,C))$, and thus 
$|\det(A)|=|H_1(\Sigma(B^4,C))=|H_1(\Sigma(S^4,F))|=|H_1(\Sigma(S^3\times I,F))|$.
\end{proof}

\subsection{Odd Levine-Zemke Theorem}

Using the dotted even Khovanov homology along with canonical movie presentation of ribbon concordances, Levine and Zemke \cite{LZ2019} proved that ribbon concordances induce injective maps between the Khovanov homology of the boundary links.  
That is, for links $L_0$ and $L_1$ and a ribbon concordance $C:L_0\rightarrow L_1$, the induced map $$Kh(C):Kh(L_0)\longrightarrow Kh(L_1)$$ is injective.
Using a very similar argument to that of Theorem~\ref{thm:ribbonknot}, we can prove a semi-analogous result in the odd setting:

\TheoremRibbonConcordance*

\begin{proof}
	Let $C\subset\RR^3\times I$ be a ribbon concordance $C:L_0\rightarrow L_1$ such that projection onto $I$ restricts to a Morse function containing no death critical points.  
	Consider $F\defeq\overline{C}\circ C$ where $\overline{C}$ is the reflection of $C$ about $\mathbb{R}^3\times\{1/2\}$.  
	Following a similar strategy to the proof of Theorem~\ref{thm:ribbonknot}, we find that $$OKh\left(F\right):OKh(L_0)\longrightarrow OKh(L_0)$$ acts by some integer multiple $a\in\ZZ$ of the identity map on $OKh(L_0)$.
	Moreover, $a$ is odd because it is nonzero modulo $2$ by the result from \cite{LZ2019}.
	By the functoriality of odd Khovanov homology, $$OKh(F)=OKh\left(\overline{C}\circ C\right)=OKh\left(\overline{C}\right)\circ OKh(C)$$
	and thus $OKh\left(\overline{C}\right)\circ OKh(C)=a\cdot\text{id}$.
	In turn over $\Bbbk=\mathbb{Q}$ or $\Bbbk=\ZZ_{2^k}$, the integer $a$ is invertible,
	 meaning $OKh(C;\Bbbk)$ has left inverse $\frac{1}{a}OKh\left(\overline{C};\Bbbk\right)$ and must be injective.
\end{proof}

\begin{remark}
The cobordism $F=\overline{C}\circ C$ from this proof can be obtained
by attaching $m$ tubes to a cobordism of the form $F':=\operatorname{id}_{L_0}\cup S_1\cup\ldots\cup S_m$,
where $S_1,\ldots,S_m$ are unknotted $2$-spheres as in the proof of Theorem~\ref{thm:ribbonknot}.
In particular, the map $OKh(F)$ is equal to \[OKh(F)=OKh(F',[\widehat{\alpha}_1]\wedge\cdots\wedge[\widehat{\alpha}_m])\]
where $\alpha_1,\ldots,\alpha_m$ are the arcs of corresponding to these tubes.
To compute the map on the right-hand side of this equation, one can consider the map
\[
\Lambda^mH_1(\Sigma(F');\ZZ)\longrightarrow\operatorname{End}_{\ZZ}\bigl(OKh(L_0)\bigr)
\]
given by $c\mapsto OKh(F',c)$. Using that isolated undotted $2$-spheres evaluate to zero, one can
see that the latter map factors through $\Lambda^mH_1(\Sigma(F'),\partial_-\Sigma(F');\ZZ)$,
where $\partial_-\Sigma(F')$ denotes the lower boundary of $\Sigma(F')$.
An argument similar to the one used in the proof of Theorem~\ref{thm:ribbonknot} then
shows that the integer $a$ from the proof of Theorem~\ref{thm:ribbonconcordance} is equal to
\[
a=\pm|H_1(\Sigma(F),\partial_-\Sigma(F);\ZZ)|=\pm|H_1(\Sigma(C),\partial_-\Sigma(C);\ZZ)|.
\]
In particular, the conclusion of Theorem~\ref{thm:ribbonconcordance} remains true over integer coefficients
in the special case where $H_1(\Sigma(C),\partial_-\Sigma(C);\ZZ)=0$ (and hence $a=\pm 1$).
\end{remark}

\section{Odds and Ends}\label{s:oddend}

\subsection{Non-invariance in $S^3\times I$ Computations}\label{s:noninvariance}
In \cite{MW2024} we introduced a pair of cobordisms $F_1$ and $F_2$ which are ambient isotopic in $S^3\times I$ but not in $\RR^3\times I$ and induce different maps on odd Khovanov homology, proving that odd Khovanov homology is not functorial in $S^3\times I$.

\begin{figure}[H]
    \centering
    \includeFig{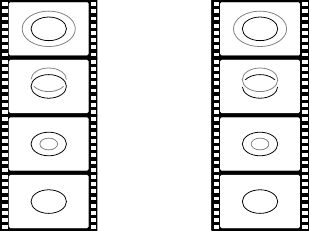}
    \caption{The cobordisms $F_1$ and $F_2$ on the left and right respectively}
\end{figure}

The computations were deferred until this paper as they are simpler and clearer using the machinery of dotted cobordisms.  
In all the following let $\doteq$ denote an equality up to sign.
A natural starting point would be to apply the neck cutting relation, but that would introduce complexity with the signs introduced by the dot in relation to the death.
We can instead precompose $F_1$ and $F_2$ with a birth---denoted $B$---as the resulting maps will compute the image of $1$, then do the same with a dotted birth---denoted $\dot{B}$---as the image of $1$ in these cobordisms is the image of $A$ under the original maps.

\begin{equation}
	\centering
    \includeFig{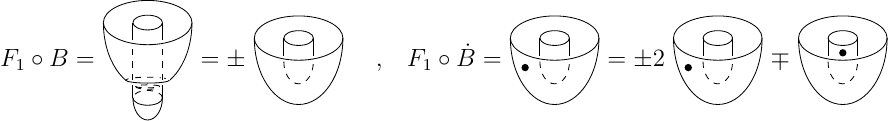}
\end{equation}

\begin{equation}
	\centering
    \includeFig{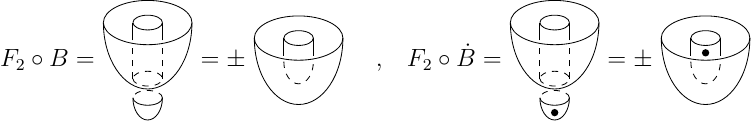}
\end{equation}

It is important to note that the signs incurred while retracting are not necessarily related between the two cobordisms.  
Let $U_2=x_1\cup x_2$ denote a 2-component unlink consisting of the inner component $x_1$ and an outer component $x_2$.  
Using notation from \cite{ORS2007}, the odd Khovanov homology groups of $U$ and $U_2$ are given by $OKh(U)=\operatorname{Span}\{1,x_1\}$ and $OKh(U_2)=\operatorname{Span}\{1,x_1,x_2,x_1\wedge x_2\}$.
Observe what each of the final maps does on the level of generators:

\begin{equation}
	\centering
	\includeFig{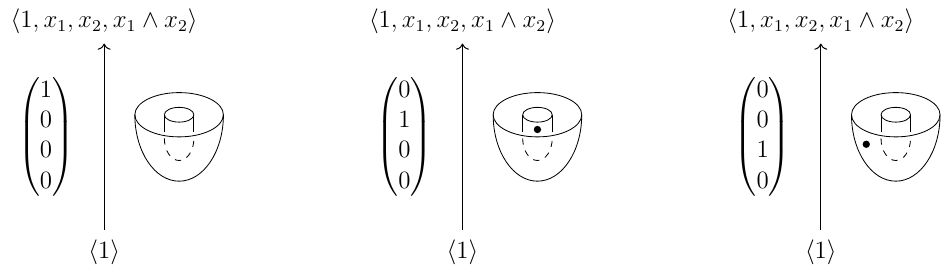}
\end{equation}

It follows that $OKh(F_1)\doteq\begin{pmatrix} \pm1 & 0 \\ 0 & -1 \\ 0 & 2 \\ 0 & 0 \end{pmatrix}$ and $OKh(F_2)\doteq\begin{pmatrix} -1 & 0 \\ 0 & 1 \\ 0 & 0 \\ 0 & 0 \end{pmatrix}$
which regardless of sign uncertainties differ by a non-zero map factor in where they map the generator $A$.

We extended $F_1$ and $F_2$ to an infinite family $\{F_n\}$ of cobordisms ambient isotopic in $S^3\times I$, non-isotopic in $\RR^3\times I$, which all induce different maps on odd Khovanov homology.
In order to facilitate computation of the map induced by $F_n$ we will represent the cobordisms by a tangle in the half plane which reproduces the cobordisms when swept around the boundary axis.  
Any dots or arcs in the diagrams attach to the tangle at precisely the point in the cross section and are not rotated about the axis.

\begin{equation}
	\centering
	\includeFig{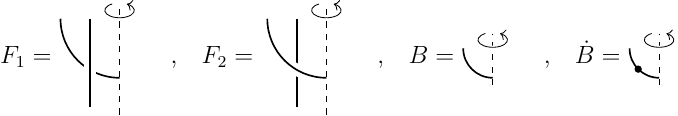}
\end{equation}

In order to construct $F_n$ we need the additional cobordism $R:U_2\rightarrow U_2$ where the outer circle crosses over the inner circle---becoming the inner circle---then crosses under the inner circle to return to its original position.\footnote{We could also add cobordisms to the family by composing with the reverse $\overline{R}$ in place of $R$.}

\begin{equation}
	\centering
	\includeFig{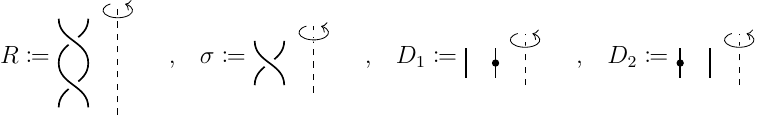}
\end{equation}

The cobordism $F_n:U\rightarrow U_2$ is defined by $F_n\defeq R^{n-1}\circ F_1$ where the power is the repeated composition of the cobordism $R$.\footnote{Note that the original $F_2$ is isotopic to the $F_2$ we just defined}
Our relations for passing dots through a crossing can be summed up by the matrix $\sqrt{M}:=\begin{pmatrix}0&-1\\1&2\end{pmatrix}$ measuring how dots on $x_1$ and $x_2$---in that order---precomposed with $\sigma$, relate to those dots postcomposed with $\sigma$.
For the full cobordism $R$, we have $M=\begin{pmatrix}-1&-2\\2&3\end{pmatrix}$, whose powers have the form $M^m=\begin{pmatrix}-2m+1&-2m\\2m&2m+1\end{pmatrix}$.  We will use the same strategy from before precomposing with both $B$ and $\dot{B}$.

\begin{equation}
	\centering
	\includeFig{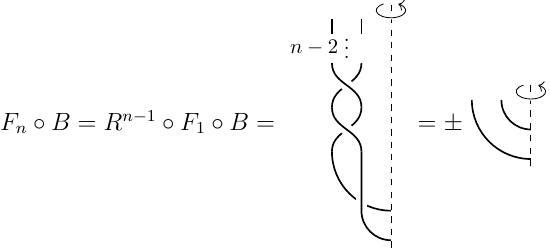}
\end{equation}

\begin{equation}
	\centering
	\includeFig{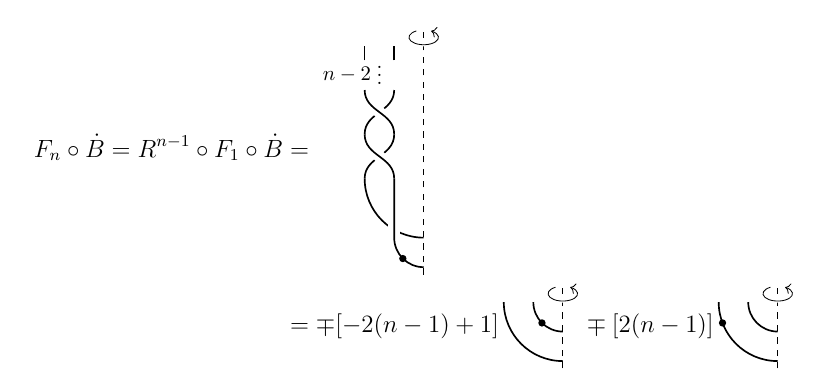}
\end{equation}

Therefore for $F_n=R^{n-1}\circ F_1$ the following map is induced
$OKh(F_n)\doteq\begin{pmatrix} 1 & 0 \\ 0 & 2n-3 \\ 0 & -2n+2 \\ 0 & 0 \end{pmatrix}$

\subsection{Two Approaches for Eliminating Signs}\label{subs:modifieddots}

Throughout this paper, we have been working under the assumption
that the dot map $x_e$ is defined as in Subsection~\ref{subs:DotChainMaps}, and that the map $OKh(F)$
assigned to a link cobordism $F$ is defined as in \cite{Pu2015,MW2024}.
The signs $(-1)^{|c||F'|}$ and $(-1)^{|c||F|}$ that appear in \eqref{eqn:composition} and \eqref{eqn:intertwine} are directly tied to these conventions.
In fact, these signs arise because the maps $x_e$ supercommute (in the sense of Lemma~\ref{lem:xcommute}) with
the maps induced by link cobordisms. We will now describe two independent
approaches for making the dot maps commute properly with the cobordism maps, thereby eliminating the need for signs in \eqref{eqn:composition} and \eqref{eqn:intertwine}.

The first approach is to redefine the dot maps by replacing $x_e$ by $x_e\circ f$,
where $f$ denotes the automorphism of odd Khovanov homology that acts as the identity
on generators of even superdegree, and as minus the identity on generators of odd
superdegree. By construction, the automorphism $f$ commutes with all maps of even superdegree, and anticommutes
with all maps of odd superdegree. 
Since the superdegree of the map $OKh(F)$ coincides with the superdegree of $F$,
Lemma~\ref{lem:xcommute} thus implies
that the maps $x_e\circ f$ properly commute with maps induced by link cobordisms.
The second approach is to leave the dot maps $x_e$ unchanged, but to modify
the link cobordism maps by replacing $OKh(F)$ by $OKh(F)\circ f$
whenever $F$ has odd superdegree. 
In view of the properties of $f$, it is easy to see that
the resulting modified cobordism maps
commute properly with the dot maps $x_e$, and that they are compatible with composition of link cobordisms
up to a possible overall sign.

\begin{remark}
It turns out that the map $x_e\circ f$ from the first approach
has a simple description on the level of resolutions.
Namely, let $D_\alpha$ be a resolution of the given link diagram,
and let $a_i$ be the component of $D_\alpha$ that meets the edge $e$.
Then the map $x_e\circ f$ acts on $\mathcal{F}_{odd}(D_\alpha)\langle s(\alpha)\rangle$
by wedge multiplication from the right by $a_i$.
To see this, let $a_{i_1},\ldots,a_{i_k}$ be connected components of $D_\alpha$.
Then the generator $a_{i_1}\wedge\cdots\wedge a_{i_k}\in\mathcal{F}_{odd}(D_\alpha)\langle s(\alpha)\rangle$
has superdegree $k+s(\alpha)$, and
the map $x_e$ acts on this generator 
wedge multiplication from the left by $(-1)^{S(D,\alpha)}a_i$.
Hence $x_e\circ f$ sends this generator to
\[
(-1)^{S(D,\alpha)}(-1)^{k+s(\alpha)}a_i\wedge\left( a_{i_1}\wedge\cdots\wedge a_{i_k}\right)=\left(a_{i_1}\wedge\cdots\wedge a_{i_k}\right)\wedge a_i,
\]
where this equation holds because
$s(\alpha)$ is the modulo $2$ reduction of $S(D,\alpha)$.
\end{remark}

\subsection{Bimodules and a Bifunctor on Undecorated Link Cobordisms}
Suppose we have used one of the two approaches from the previous subsection to eliminate the sign in \eqref{eqn:intertwine}. Then the map
\[
OKh(F,-)\colon\Lambda^*H_1(\Sigma(F\cup\operatorname{id}_U);\ZZ)
\longrightarrow\operatorname{Hom}_{\ZZ}\bigl(OKh(L),OKh(L')\bigr)
\]
from Corollary~\ref{cor:OKhFcinvariance} becomes a map of $(\Lambda_{L'},\Lambda_L)$-bimodules,
where
\[
\Lambda_L:=\Lambda^*H_1(\Sigma(L\cup U);\mathbb{Z})\quad\mbox{and}\quad
\Lambda_{L'}:=\Lambda^*H_1(\Sigma(L'\cup U);\mathbb{Z}),\] and where the bimodule structure on $\Lambda_F:=\Lambda^*H_1(\Sigma(F\cup\operatorname{id}_U);\ZZ)$ is induced by the obvious algebra maps
$\Lambda_L\rightarrow\Lambda_F$ and $\Lambda_{L'}\rightarrow\Lambda_F$.
By adjunction, we can reinterpret the bimodule map $OKh(F,-)$ as a $\Lambda_{L'}$-linear map
\begin{equation}\label{eqn:OKhbifunctor}
\mathbf{OKh}(F)\colon\Lambda_F\otimes_{\Lambda_L}OKh(L)\longrightarrow OKh(L')
\end{equation}
given by $\mathbf{OKh}(F)(c\otimes h):=OKh(F,c)(h)$.
If $F\colon L\rightarrow L'$ and $F'\colon L'\rightarrow L''$ are
composable link cobordisms, we further have
\[
\Lambda_{F'\circ F}=\Lambda_{F'}\otimes_{\Lambda_{L'}}\Lambda_F.
\]
In fact, this follows from the proof of Lemma~\ref{lem:ColFH} and from the definition of $\col(F)$.
Using these observations, we can now repackage the odd Khovanov functor
\[OKh\colon\mathcal{C}\mathit{ob}^4_\Lambda\longrightarrow\operatorname{Ab}/\{\pm 1\}\]
from Theorem~\ref{thm:functor} as a bifunctor
\[
\mathbf{OKh}\colon\mathbf{Cob}^4\longrightarrow\mathbf{C}
\]
defined on a bicategory $\mathbf{Cob}^4$ with the following objects and morphisms:
\begin{itemize}
\item Objects: Links in $\mathbb{R}^3$ in general position.
\item 1-Morphisms: Link cobordisms in $\mathbb{R}^3\times I$ in general position.
\item 2-Morphisms from $F$ to $F'$: Diffeomorphims $\phi\in D(S^3\times I)$ such that $\phi(F)=F'$.
\end{itemize}
The target bicategory of $\mathbf{OKh}$ is defined as follows.
The objects are pairs $(A,M)$, where $A$ is an algebra, and $M$ is a left module over $A$.
The $1$-morphisms between two objects $(A,M)$ and $(B,N)$
are  pairs $(\Lambda,g)$,
where $\Lambda$ is a $(B,A)$-bimodule, and $g$ is a $B$-linear map
\[g\colon\Lambda\otimes_AM\longrightarrow N,\]
considered up to an overall sign.
Given two $1$-morphisms $(\Lambda,g)\colon(A,M)\rightarrow(B,N)$ and $(\Lambda',g')\colon(B,N)\rightarrow(C,O)$,
we define
\[
(\Lambda',g')\circ(\Lambda,g):=(\Lambda'\otimes_B\Lambda,g'\circ(\operatorname{id}_{\Lambda'}\otimes g)).
\]
Finally, the $2$-morphisms between two $1$-morphisms $(\Lambda,g),(\Lambda',g')\colon(A,M)\rightarrow(B,N)$
are given by bimodule
isomorphisms $\varphi\colon\Lambda\rightarrow\Lambda'$
such that \[g'\circ(\varphi\otimes\operatorname{id}_M)=\pm g.\]
The bifunctor $\mathbf{OKh}$ is now defined as follows:
\begin{itemize}
\item To a link $L$, it assigns the group $OKh(L)$, viewed as a module over $\Lambda_L$.
\item To a link cobordism $F$, it assigns the map $\mathbf{OKh}(F)$ from \eqref{eqn:OKhbifunctor}.
\item To a $2$-morphism $\phi$, it assigns the induced isomorphism $\widetilde{\phi}_*\colon\Lambda_F\rightarrow\Lambda_{F'}$.
\end{itemize}

\printbibliography

\end{document}